\newcommand{\tst}{\textstyle}
\theoremstyle{plain}
\newtheorem*{theorem*}{Theorem}
\newtheorem{theorem}  {Theorem}    [section]
\newtheorem{lemma}      [theorem]{Lemma}
\newtheorem{corollary}  [theorem]{Corollary}
\newtheorem{proposition}[theorem]{Proposition}
\newtheorem{remark}  [theorem] {Remark}
\theoremstyle{definition}
\newtheorem{definition} [theorem]{Definition}
\DeclareMathOperator{\Cl}{Cl}
\newcommand{\AI}{{\mathcal{AI}}}
\renewcommand{\H}{\mathbb H}
\newcommand{\A}{{\mathbb A}}
\newcommand{\Q}{{\mathbb Q}}
\newcommand{\Z}{{\mathbb Z}}
\newcommand{\R}{{\mathbb R}}
\newcommand{\C}{{\mathbb C}}
\newcommand{\bs}{\backslash}
\newcommand{\p}{\mathfrak p}
\newcommand{\eps}{\epsilon}
\renewcommand{\a}{\alpha}
\renewcommand{\b}{\beta}
\newcommand{\OF}{{\mathfrak o}}
\newcommand{\GL}{{\rm GL}}
\newcommand{\st}{{\rm st}}
\newcommand{\SL}{{\rm SL}}
\newcommand{\SO}{{\rm SO}}
\newcommand{\GSp}{{\rm GSp}}
\newcommand{\Sp}{{\rm Sp}}
\newcommand{\Ad}{{\rm Ad}}
\newcommand{\sym}{{\rm sym}}
\newcommand{\vol}{{\rm vol}}
\newcommand{\Norm}{{\rm N}}
\newcommand{\Mat}{{M}}
\newcommand{\trace}{{\rm tr}}
\newcommand{\Tr}{{\rm Tr}}
\newcommand{\disc}{{\rm disc}}
\newcommand{\T}[1]{{}^t\!{{#1}}}
\renewcommand{\(}{\left(} \renewcommand{\)}{\right)}
\newcommand{\mat}[4]{\begin{bsmallmatrix}#1&#2\\#3&#4\end{bsmallmatrix}}
\newenvironment{bsmallmatrix}{\left[\begin{smallmatrix}}{\end{smallmatrix}\right]}
\def\Sc{\mathcal S}
\def\1{\mathbbm{1}}
\title{Bounds on Fourier coefficients and global sup-norms for Siegel cusp forms of degree 2}
\author{F\'elicien Comtat, Jolanta Marzec-Ballesteros, Abhishek Saha}
\address{Mathematical Institute, University of Bonn, D-53115 Bonn, Germany}
\email{comtat@math.uni-bonn.de}
\address{Faculty of Mathematics and Computer Science, Adam Mickiewicz University, 61-614 Pozna\'n, Poland}
\email{jmarzec@amu.edu.pl}
\address{School of Mathematical Sciences, Queen Mary University of London, London E1 4NS, UK}
\email{abhishek.saha@qmul.ac.uk}
\subjclass[2010]{Primary 11F30, 11F46; Secondary 11F70}
\begin{document}

\begin{abstract}
Let $F$ be an $L^2$-normalized Siegel cusp form for $\Sp_4(\Z)$ of weight $k$ that is a Hecke eigenform and not a Saito--Kurokawa lift. Assuming the Generalized Riemann Hypothesis, we prove that its Fourier coefficients satisfy the bound $|a(F,S)| \ll_\eps \frac{k^{1/4+\eps} (4\pi)^k}{\Gamma(k)} c(S)^{-\frac12} \det(S)^{\frac{k-1}2+\eps}$ where $c(S)$ denotes the gcd of the entries of $S$, and that its global sup-norm satisfies the bound $\|(\det Y)^{\frac{k}2}F\|_\infty \ll_\epsilon k^{\frac54+\epsilon}.$ The former result depends on new bounds that we establish for the relevant local integrals appearing in the refined global Gan-Gross-Prasad conjecture (which is now a theorem due to Furusawa and Morimoto) for Bessel periods.
\end{abstract}

\maketitle

\section{Introduction}\label{s:intro}
The problem of bounding the sup-norms of $L^2$-normalized cuspidal automorphic forms as one or more of their underlying parameters tend to infinity is interesting from several points of view and has been the topic of many recent works, see e.g.  \cite{iwan-sar-95, harcos-templier-2,   marshall, blomer-pohl, saha-sup-level-hybrid, blomer-harcos-milicevic-maga, brumley-templier, sup-norm-minimal-compact, HS19, KNS22} and the references therein.
A basic case of this problem concerns upper-bounds for the global sup-norms of holomorphic cusp forms $f$ of weight $k$ for $\SL_2(\Z)$ as $k \rightarrow \infty$. Assuming that $f$ is an eigenform, Xia \cite{xia} proved the bound \begin{equation}\label{e:xiabd}\|y^{k/2}f\|_\infty \ll_\eps k^{\frac14 +\eps}\|f\|_2.\end{equation}  The exponent $1/4$ here is optimal, as one can prove a lower bound of similar strength\footnote{The lower bound of $k^{\frac14 - \eps}$ is obtained high in the cusp. A variant of the sup-norm problem focusses not on global bounds but on bounds over a fixed compact set $\Omega$ where it is an open problem to improve upon the exponent $1/4$. Indeed, the much stronger upper bound $\|y^{k/2}f|_{\Omega}\|_\infty \ll_\eps k^\eps$  is expected to hold.}. The proof uses the Fourier expansion $f(z) = \sum_{n>0}a(f, n) e^{2 \pi i nz}$ and crucially relies on Deligne's bound \begin{equation}\label{e:delbd}|a(f, n)| \le d(n)n^{\frac{k-1}{2}} |a(f, 1)|\end{equation} for the Fourier coefficients of $f$ as well as the bound  $\frac{|a(f, 1)|}{\|f\|_2} \ll_\eps k^\eps$ which follows from the non-existence of Landau--Siegel zeroes \cite{HL94} for the symmetric-square $L$-function attached to $f$. Given these deep facts, the deduction of \eqref{e:xiabd} from the Fourier expansion is fairly direct. The fact that the bound \eqref{e:xiabd} is essentially best possible reflects the special behaviour of the exponential function as a degenerate Whittaker function. 

In this paper we are interested in a rank 2 analogue of Xia's result. Namely, let $\H_{2}$ denote the Siegel upper-half space of degree $2$ and let $S_k(\Gamma)$
be the space of holomorphic Siegel cusp forms
of weight $k$ transforming with respect to the subgroup $\Gamma = \Sp_{4}(\Z) \subset \Sp_{4}(\R)$.
As in the rank 1 case considered by Xia, one may hope to exploit the Fourier expansion and obtain strong bounds on the sup-norm in this setting.
Recall that the Fourier expansion of $F \in S_k(\Gamma)$ takes the form
\begin{equation}\label{e:fouriercoeffsiegelform}\tst
 F(Z)=\sum_{S\in \Lambda_2} a(F, S)e^{2\pi i{\rm Tr}(SZ)},\qquad Z \in \H_2,
\end{equation}
where
$$ \Lambda_2 = \left\{\mat{a}{b/2}{b/2}{c}:\qquad a,b,c\in\Z, \qquad a>0, \qquad  d:=b^2 - 4ac <0\right\}.
$$
For $S =\mat{a}{b/2}{b/2}{c}\in \Lambda_2$, we define its discriminant $\disc(S)=-4\det(S)=b^2-4ac$ and its content $c(S) = \gcd(a, b, c)$. Note that $c(S)^2$ divides $\disc(S)$. If $d = \disc(S)$ is a fundamental discriminant\footnote{Recall that an integer $n$ is a fundamental discriminant if \emph{either} $n$ is a squarefree integer congruent to 1 modulo 4 \emph{or} $n = 4m$ where $m$ is a squarefree integer congruent to 2 or 3 modulo 4.}, then $S$ is called \emph{fundamental} in which case clearly $c(S)=1$.

The Fourier coefficients of Hecke eigenforms in  $S_k(\Gamma)$  are mysterious and poorly understood objects. Unlike in the rank 1 case, they contain much more information than just the Hecke eigenvalues and are closely related to central values of $L$-functions. However, there exist special forms in $S_k(\Gamma)$
known as the \emph{Saito-Kurokawa lifts} for which the Fourier coefficients are relatively better understood. In fact, the Fourier coefficients of a Saito-Kurokawa lift $F$ can be
explicitly written in terms of the Fourier coefficients of a classical half-integral weight form $g \in S_{k-\frac12}(\Gamma_0(4))$ and there exists a simple relation between the Petersson norms of $F$ and $g$. Using these facts, Blomer \cite[Sec. 4]{Blomer}  observed that  if a Hecke eigenform $F$ is a Saito--Kurokawa lift, then  under the Generalized Lindel\"of hypothesis (GLH)  one has the following bound\footnote{See also \cite[Theorem 5.1.6]{ps22} for an extension to the case of Saito--Kurokawa lifts with square-free level, where the dependence of the implied constant on $F$ is not made explicit.} on the Fourier coefficients \begin{equation}\label{e:blomerSKfourierbd} \frac{|a(F,S)|}{\|F\|_2} \ll_\eps \frac{k^{1/4+\eps} (4\pi)^k}{\Gamma(k)} c(S)^{\frac12} \det(S)^{\frac{k}{2} - \frac{3}{4} + \eps}\end{equation} where the Petersson norm $\|F\|_2$  is defined via
$\|F\|_2^2 = \langle F, F \rangle = \int\limits_{\Gamma \bs \H_2} |F(Z)|^2 (\det Y)^{k-3} dX dY.$
Using the bound \eqref{e:blomerSKfourierbd}, Blomer obtained under GLH the following essentially optimal bound \cite[Theorem 2]{Blomer} on the sup-norm of a Hecke eigenform $F\in S_k(\Gamma)$ that is a Saito--Kurokawa lift: \begin{equation}\label{e:blomersksupbd}\|(\det Y)^{\frac{k}2}F\|_\infty \ll_\epsilon k^{\frac34+\epsilon} \|F\|_2.\end{equation}

\medskip
 The main obstacle to generalizing \eqref{e:blomersksupbd} to non-Saito--Kurokawa lifts and  obtaining a global sup-norm bound for any Hecke eigenform $F \in S_k(\Gamma)$ lies in obtaining a bound similar to \eqref{e:blomerSKfourierbd} for non-lifts. A key step for fundamental $S$ was taken in \cite{DPSS15} where weighted averages of fundamental Fourier coefficents for such $F$ were related via the refined  Gan--Gross--Prasad (GGP) period conjecture for $(\SO_5, \SO_2)$ (which is now a theorem due to Furusawa and Morimoto \cite{FM22}) to values of higher degree $L$-functions. Using this, a bound under GRH for the fundamental Fourier coefficients  was proved in \cite[Prop 3.17]{DPSS15}; see also \cite[Theorem C]{JLS20} for a related result which saves an additional power of $\log(\det(S))$ but where the implied constant depends on $F$.

 A main achievement of the present paper is to go \emph{beyond fundamental matrices} and obtain a uniform bound for $\frac{|a(F,S)|}{\|F\|_2}$ under GRH for all $S \in \Lambda_2$. To lay the groundwork for our theorem,  recall first that
$a(F, S)=a(F, \T{A}SA)
$
for $A\in\mathrm{SL}_2(\mathbb Z)$, i.e., the Fourier coefficient $a(F, S)$ depends only on the $\mathrm{SL}_2(\mathbb Z)$-equivalence class of $S$.  Let $D<0$ be congruent to 0 or 1 mod 4 and let $L$ be a positive integer. The set of $\SL_2(\Z)$-equivalence classes of matrices $S \in \Lambda_2$ whose content $c(S)$ equals $L$ and whose discriminant $\disc(S)$ equals $L^2D$ can be canonically identified with the class group $H_D$ of the imaginary quadratic order of discriminant $D$. We view the characters $\Lambda$ of the finite abelian group $H_D$ as Hecke characters of $K^\times \bs \A_K^\times$ where $K = \Q(\sqrt{D})$. Note that in the special case that $D$ is a fundamental discriminant, these are precisely the characters of the ideal class group. We prove the following theorem.
\begin{theorem}[see Theorem \ref{t:mainfouriergen}]\label{t:mainfourier}
Let $F \in S_k(\Gamma)$ be a Hecke eigenform with Fourier expansion given by \eqref{e:fouriercoeffsiegelform}. Assume that $F$ is not a Saito--Kurokawa lift and let $\pi$ be the automorphic representation generated by $F$. Let $D<0$ be an integer that is congruent to 0 or 1 mod 4 and let $L$ be a positive integer. Then
\begin{equation}\label{e:theoremintro}\sum_{\substack{S \in \Lambda_2 / \SL_2(\Z) \\ c(S)=L, \ \disc(S)=L^2 D}}|a(F,S)|^2 \ll_\eps \langle F, F \rangle \frac{(4 \pi)^{2k}}{\Gamma(2k-1)} L^{2k-3 + \eps}|D|^{k - \frac{3}{2} + \eps} \sum_{\Lambda\in\widehat{H_D}} \frac{L(1/2, \pi \times \AI(\Lambda))}{L(1, \pi, \Ad)}.\end{equation}
\end{theorem}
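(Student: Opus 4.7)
The plan is to combine Plancherel on the class group $H_D$ with the refined Gan--Gross--Prasad conjecture for Bessel periods (the Furusawa--Morimoto theorem for $(\SO_5,\SO_2)$).

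First, I would fix $D$ and $L$ and use the canonical bijection between the set of $\SL_2(\Z)$-equivalence classes $[S]$ with $c(S) = L$, $\disc(S) = L^2 D$ and the class group $H_D$. Writing $a(F, c)$ for the Fourier coefficient attached to $c \in H_D$, I would form the twisted sums
$$ B(F, \Lambda) := \sum_{c \in H_D} \Lambda(c)\, a(F, c), \qquad \Lambda \in \widehat{H_D}. $$
Plancherel on the finite abelian group $H_D$ then gives the exact identity
$$ \sum_{[S]} |a(F, S)|^2 \;=\; \frac{1}{|H_D|} \sum_{\Lambda \in \widehat{H_D}} |B(F, \Lambda)|^2, $$
which rewrites the target sum as a sum over characters.

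Next, I would identify $B(F, \Lambda)$, up to an explicit constant, with a global Bessel period $\mathcal{B}_\Lambda(\Phi_F)$ of the adelic form $\Phi_F$ attached to $F$, where the Bessel subgroup is determined by $K = \Q(\sqrt{D})$ and a character of the torus induced from $\Lambda$. The identification is the standard \emph{Bessel unfolding}: summing the Fourier coefficient $a(F, S)$ against $\Lambda(c)$ over the class group promotes the integration along the unipotent radical in the definition of $a(F, S)$ to an integration along the full Bessel subgroup. Because $F$ is not a Saito--Kurokawa lift, $\pi$ is non-CAP (hence tempered for scalar weight $k$), so the refined GGP conjecture of Furusawa--Morimoto applies and yields an identity of the form
$$ |\mathcal{B}_\Lambda(\Phi_F)|^2 \;=\; \langle F, F \rangle \cdot \frac{L(1/2, \pi \times \AI(\Lambda))}{L(1, \pi, \Ad)} \cdot J_\infty(k) \cdot \prod_p J_p(L, D, \Lambda), $$
where $J_\infty$ is the archimedean local Bessel integral and the $J_p$ are normalized non-archimedean local integrals.

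The heart of the proof is the uniform estimation of the local factors. The archimedean integral $J_\infty(k)$ should be computable in essentially closed form from the explicit realization of the lowest-weight vector of the holomorphic discrete series of weight $k$; this computation will produce the factor $(4\pi)^{2k}/\Gamma(2k-1)$ in \eqref{e:theoremintro}. The finite-place integrals must then be bounded uniformly in $L$, $D$, and $\Lambda$: these are precisely the new local estimates promised in the abstract, and I expect them to be the main obstacle. The difficulty is concentrated at primes dividing $L$ (where, because $S$ is non-primitive, the relevant local Bessel vector and matrix coefficient are ramified in a delicate way) and at primes ramified in $K$. Once one secures the uniform bound $\prod_p J_p(L,D,\Lambda) \ll_\eps |H_D|\cdot L^{2k-3+\eps}|D|^{k-3/2+\eps}$, the $|H_D|^{-1}$ from Plancherel is absorbed and summing over $\Lambda \in \widehat{H_D}$ produces the claimed inequality \eqref{e:theoremintro}.
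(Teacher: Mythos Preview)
Your overall architecture is exactly the paper's: Plancherel on $H_D$ to rewrite the Fourier-coefficient sum in terms of twisted sums, identification of each twisted sum with a global Bessel period of a translate of $\phi_F$, then Furusawa--Morimoto to convert $|\mathcal{B}_\Lambda|^2$ into $L(1/2,\pi\times\AI(\Lambda))/L(1,\pi,\Ad)$ times a product of local integrals, followed by uniform bounds on those local factors.

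Two points where your bookkeeping diverges from what actually happens. First, the finite-place integrals $J_p$ carry no $k$-dependence; the factor $(4\pi)^{2k}/\Gamma(2k-1)$ and the powers $L^{2k}$, $|D|^k$ come entirely from the archimedean integral $J_\infty$ together with the explicit constant (of size roughly $|H_D|^2(LM)^{2k}$) in the identification between $B(F,\Lambda)$ and the adelic Bessel period. In particular the $|H_D|$ does not sit inside $\prod_p J_p$ but in that conversion constant; after Plancherel one is left with a net positive factor of $|H_D|$, which is absorbed as $|H_D|\ll_\eps |D|^{1/2+\eps}$. Second, you locate the main local difficulty at primes dividing $L$ and primes ramified in $K$. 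In fact the primes dividing $L$ are handled rather directly via Sugano's explicit formula for the spherical Bessel function along $h(\ell,m)$. The genuinely new obstacle---and the technical heart of the paper---is at primes $p$ dividing the \emph{non-fundamental part} $M$ of $D=dM^2$, where the character $\Lambda_p$ has positive conductor $c(\Lambda_p)>0$; there one must bound $J_{\Lambda_p,\theta_p}(\phi_p^{(0,c(\Lambda_p))})$ in the depth aspect, and this is done by a soft volume argument rather than exact evaluation.
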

In the above theorem, we note that the $L$-values are known to be non-negative \cite[Theorem 1.1]{lapid03} and that the length of the sum on each side is  equal to $|H_D| \asymp |D|^{\frac12 + o(1)}$. Under GRH\footnote{Strictly speaking, all we need is GLH and a sufficiently strong zero-free region for $L(s, \pi, \Ad)$.}, we can bound $\frac{L(1/2, \pi \times \AI(\Lambda))}{L(1, \pi, \Ad)} \ll_\eps (kD)^\eps.$  Recalling that $4\det(S) = |\disc(S)| = L^2|D|$ and using the duplication formula for the Gamma function, we obtain under GRH the strong bound (Corollary \ref{FourierCoeffBound})
\begin{equation}\label{e:boundsquares}   \sum_{\substack{S \in \Lambda_2 / \SL_2(\Z)\\ c(S)=L, \ \disc(S)=L^2D}} \frac{|a(F,S)|^2}{\|F\|^2_2} \ll_\eps \frac{k^{1/2+\eps} (2\pi)^{2k}}{\Gamma(k)^2} L^{-1} |L^2D|^{k-1 + \eps} .
\end{equation}
The bound \eqref{e:boundsquares} may be viewed as an extension of the bound \eqref{e:blomerSKfourierbd} to non-Saito--Kurokawa lifts\footnote{A key point here is that for Saito--Kurokawa lifts $F$, $a(F,S)$ depends only on $c(S)$ and $\det(S)$, and so for the corresponding sum in the case of the Saito--Kurokawa lifts all terms on the left side are equal. This may not be true for non-Saito--Kurokawa lifts.}. We believe that \eqref{e:boundsquares} is optimal as far as the exponents on the right side are concerned. Assuming that most summands on the left side of \eqref{e:boundsquares} are of comparable size, one is led to the optimistic and far-reaching conjecture $\frac{|a(F,S)|}{\|F\|_2} \ll_\eps \frac{k^{1/4+\eps} (4\pi)^{k}}{\Gamma(k)}  \det(S)^{\frac{k}{2}-\frac{3}{4} + \eps}$ for individual Fourier coefficients, which refines the famous open conjecture of Resnikoff and Salda\~{n}a \cite{res-sald} (and whose proof seems well beyond reach even if one were to assume standard conjectures like GRH).
On the other hand, dropping all but one term from \eqref{e:boundsquares}, we obtain the following corollary.
\begin{corollary}Assume GRH. For a Hecke eigenform $F \in S_k(\Gamma)$ that is not a Saito--Kurokawa lift, we have for any $S \in \Lambda_2$ the bound \begin{equation}\label{e:c:intro}\frac{|a(F,S)|}{\|F\|_2} \ll_\eps \frac{k^{1/4+\eps} (4\pi)^k}{\Gamma(k)} c(S)^{-\frac12} \det(S)^{\frac{k-1}2+\eps}.\end{equation}
\end{corollary}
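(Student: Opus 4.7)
The plan is to derive the corollary directly from the bound \eqref{e:boundsquares}, which has just been established (under GRH) as a consequence of Theorem \ref{t:mainfourier} together with the duplication formula for the Gamma function and the standard GRH bounds $L(1/2,\pi\times \AI(\Lambda)) \ll_\eps (kD)^\eps$ and $L(1,\pi,\Ad)^{-1} \ll_\eps k^\eps$. The reduction to an individual Fourier coefficient is the final (and easiest) step.

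Fix $S \in \Lambda_2$. First I would recover the invariants attached to $S$ in the language of \eqref{e:boundsquares}: set $L := c(S)$ and $D := \disc(S)/L^2$, so that $D$ is an integer congruent to $0$ or $1\bmod 4$ with $L^2 D = \disc(S) = -4\det(S)$, and $S$ represents one of the $\SL_2(\Z)$-equivalence classes parametrized by $H_D$. Since every term on the left of \eqref{e:boundsquares} is non-negative, I can drop all but the single class of $S$ to obtain
\begin{equation*}
\frac{|a(F,S)|^2}{\|F\|_2^2} \ll_\eps \frac{k^{1/2+\eps}(2\pi)^{2k}}{\Gamma(k)^2}\, L^{-1}\, |L^2 D|^{k-1+\eps}.
\end{equation*}

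Next I would take square roots and substitute $|L^2 D| = 4\det(S)$. This yields
\begin{equation*}
\frac{|a(F,S)|}{\|F\|_2} \ll_\eps \frac{k^{1/4+\eps}(2\pi)^{k}}{\Gamma(k)}\, L^{-1/2}\, \bigl(4\det(S)\bigr)^{(k-1)/2+\eps},
\end{equation*}
and absorbing the factor $4^{(k-1)/2} = 2^{k-1}$ into $(2\pi)^k$ converts $(2\pi)^k \cdot 2^{k-1}$ into $(4\pi)^k/2$, while the factor $4^\eps$ is absorbed into the implied constant. Replacing $L$ by $c(S)$ gives exactly the claimed bound
\begin{equation*}
\frac{|a(F,S)|}{\|F\|_2} \ll_\eps \frac{k^{1/4+\eps}(4\pi)^k}{\Gamma(k)}\, c(S)^{-1/2}\, \det(S)^{(k-1)/2+\eps}.
\end{equation*}

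There is no real obstacle here: the entire content of the corollary lies in \eqref{e:boundsquares}, whose proof is the substantive input and which in turn relies on Theorem \ref{t:mainfourier} together with the assumed GRH bounds on $L(1/2,\pi\times\AI(\Lambda))$ and on $L(1,\pi,\Ad)^{-1}$. The passage from the averaged bound to the pointwise bound only loses whatever cancellation may exist among the $|H_D|\asymp |D|^{1/2+o(1)}$ Fourier coefficients in a single genus, and this loss is exactly what is reflected in the gap between \eqref{e:c:intro} and the Resnikoff--Salda\~na--type conjecture recalled in the introduction.
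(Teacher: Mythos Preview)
Your proposal is correct and follows exactly the paper's approach: the paper simply says ``dropping all but one term from \eqref{e:boundsquares}'' to obtain the corollary, and your write-up spells out precisely this step together with the straightforward arithmetic converting $(2\pi)^k\cdot 2^{k-1}$ into $(4\pi)^k/2$. The same argument appears again in the proof of Corollary~\ref{FourierCoeffBound}, where one sets $L=c(T)$ and $dM^2=-4\det(T)/c(T)^2$ and drops all but one term.
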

In contrast to \eqref{e:boundsquares}, the bound \eqref{e:c:intro} is not expected to be optimal (even though it assumes GRH) because we potentially lose a factor of $\frac{\det(S)^\frac14}{c(S)^\frac12}$ when we drop all the other terms.  In the body of the paper we do not assume GRH but instead assume that $\frac{L(1/2, \pi \times \AI(\Lambda))}{L(1, \pi, \Ad)}$ is bounded by a specific power of the analytic conductor and we write down analogous bounds to \eqref{e:boundsquares}, \eqref{e:c:intro} under this assumption; see Corollary \eqref{FourierCoeffBound}.

Using our bound \eqref{e:c:intro}, we obtain a global sup-norm bound for non Saito--Kurokawa lifts  $F \in S_k(\Gamma)$ under GRH.
\begin{theorem}[see Theorem \ref{t:mainsup}]\label{t:introsup}
    Assume GRH. Let $F \in S_k(\Gamma)$ be a Hecke eigenform  that is not a Saito--Kurokawa lift. Then
       $$\|(\det Y)^{\frac{k}2}F\|_\infty \ll_\epsilon k^{\frac54+\epsilon} \|F\|_2.$$
\end{theorem}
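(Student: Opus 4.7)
The plan is to adapt Xia's Fourier-expansion argument to genus $2$, using the GRH-conditional pointwise Fourier coefficient bound \eqref{e:c:intro} as a substitute for Deligne's bound. Since $(\det Y)^{k/2}|F(Z)|$ is $\Sp_4(\Z)$-invariant, I may restrict attention to $Z=X+iY$ in a Siegel fundamental set $\mathfrak S$ on which $Y$ is Minkowski reduced with $y_1\ge\sqrt{3}/2$; in particular $\det Y$ is bounded below by a positive absolute constant throughout $\mathfrak S$. Inserting the Fourier expansion \eqref{e:fouriercoeffsiegelform} and the bound \eqref{e:c:intro} (and discarding the harmless factor $c(S)^{-1/2}\le 1$) yields
$$(\det Y)^{k/2}|F(Z)|\ \ll_\eps\ \|F\|_2\,\frac{k^{1/4+\eps}(4\pi)^k}{\Gamma(k)}\,(\det Y)^{k/2}\sum_{S\in\Lambda_2}\det(S)^{\frac{k-1}{2}+\eps}e^{-2\pi\Tr(SY)}.$$

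The next task is to estimate the lattice sum by the corresponding continuous integral. The Siegel gamma identity $\int_{S\succ 0}\det(S)^{s-3/2}e^{-\Tr(SY)}\,dS=\Gamma_2(s)\det(Y)^{-s}$, with $\Gamma_2(s)=\pi^{1/2}\Gamma(s)\Gamma(s-1/2)$, evaluated at $s=(k+2)/2+\eps$ (after the rescaling $S\mapsto 2\pi S$) and combined with the Legendre duplication formula $\Gamma(k/2+1)\Gamma(k/2+1/2)=2^{-k}\sqrt{\pi}\,\Gamma(k+1)$ (which yields $\Gamma_2((k+2)/2)=\pi k\Gamma(k)2^{-k}$), makes all the gamma functions and powers of $4\pi$ cancel against the prefactor $k^{1/4+\eps}(4\pi)^k/\Gamma(k)$; what remains is an absolute constant times $k^{5/4+\eps}\det(Y)^{-1-\eps}$. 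Since $\det Y$ is bounded below on $\mathfrak S$, this gives $(\det Y)^{k/2}|F(Z)|\ll_\eps k^{5/4+\eps}\|F\|_2$, as required.

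The principal technical obstacle is the sum-to-integral comparison. The integrand $\det(S)^{(k-1)/2}e^{-2\pi\Tr(SY)}$ is log-concave on the positive-definite cone, with a saddle at $S\approx (k/(4\pi))Y^{-1}$ and Gaussian width $\sim k^{1/2}/y_i$ in each coordinate direction. When each $y_i\ll k^{1/2}$ this width exceeds $1$, and a standard Riemann-sum comparison for log-concave functions yields $\sum\ll\int$; when some $y_i\gg k^{1/2}$ (deep in the cusp) only a few lattice points contribute, and the decay $e^{-2\pi y_i}$ overwhelms the polynomial factor $(\det Y)^{k/2}$, so direct pointwise bounds handle this regime. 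Merging the two regimes into a single uniform bound, and controlling the edge effects near the boundary of the positive-definite cone, constitutes the technical heart of the argument.
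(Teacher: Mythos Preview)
Your overall strategy---insert the Fourier expansion, apply the GRH-conditional coefficient bound \eqref{e:c:intro}, and then control the resulting lattice sum over $\Lambda_2$---is exactly the route the paper takes, and your gamma/duplication arithmetic showing that the continuous Siegel integral produces $k^{5/4+\eps}$ after all cancellations is correct.

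Where you diverge from the paper is in the execution of the lattice-sum estimate. You propose a direct sum-to-integral comparison based on log-concavity of $\det(S)^{(k-1)/2}e^{-2\pi\Tr(SY)}$. The paper instead diagonalises $TY$, decomposes $\Lambda_2$ into shells according to where the eigenvalues $x_1,x_2$ lie relative to $k/(4\pi)$, bounds $g_k(x_1)g_k(x_2)$ pointwise on each shell (Lemma~\ref{realanalysis}), and counts lattice points per shell via Blomer's bound $N_Y(k/(4\pi),\sqrt{k}\log k)\ll k^{3/2+\eps}(\det Y)^{-3/4}$ (Lemma~\ref{Blomercount}) for the main shell and the cruder Lemma~\ref{trivialcount} for the tails. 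Those counting lemmas are precisely the rigorous substitute for the Riemann-sum comparison you invoke.

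Your sketch is not wrong in spirit, but the sentence ``a standard Riemann-sum comparison for log-concave functions yields $\sum\ll\int$'' hides the entire difficulty. For a log-concave function on a three-dimensional cone this is not automatic: one must control lattice points near the boundary $\det S=0$, and one must treat the cuspidal regime where the peak $S_0\approx\frac{k}{4\pi}Y^{-1}$ has $(2,2)$-entry below $1$ and hence falls outside $\Lambda_2$ (so the sum is \emph{not} governed by the peak value, and a naive $\sum\le C(\int+\max)$ bound would give the wrong power of $\det Y$). You flag both issues but do not resolve them. Blomer's lemmas package exactly this work, and the paper's eigenvalue-shell decomposition is the clean way to organise it; if you carry your integral approach through rigorously you will essentially reprove them.
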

The reason the exponent $5/4$ in the above Theorem is weaker than the exponent $3/4$ proved by Blomer for Saito--Kurokawa lifts is the non-optimality of the bound \eqref{e:c:intro}. (We expect the true exponent for the sup-norm to be $3/4$ for non-lifts\footnote{This is a special case of Conjecture 1.1 of \cite{das-krishna} and is supported by heuristics of the Bergman kernel as well as the fact that one can prove a lower bound of $k^{3/4- \eps}$ for many non-lifts using a method similar to \cite[Theorem 1]{Blomer}.} as well).

The proof of Theorem \ref{t:introsup} follows from \eqref{e:c:intro} in a relatively straightforward manner via the Fourier expansion, similar to the analysis in \cite{Blomer}. We remark here that (as is often the case) the Fourier expansion gives better results near the cusp.
    Indeed, our proof shows that if $Y$ is Minkowski-reduced, then we have under GRH the stronger bound  $ (\det Y)^{\frac{k}2}|F(Z)|\ll_\epsilon (\det Y)^{-\frac14} \ k^{\frac54+\epsilon}\|F\|_2.$
    In particular, when $Z$ is ``high in the cusp", this provides extra power savings in $k$.
    Thus, one may hope to obtain improved sup-norm bounds if one can tackle the ``bulk" by different methods.

While we have focussed on the Fourier expansion approach towards the global sup-norm in this paper, alternative approaches toward Theorem \ref{t:introsup} exist.  For example, using a Bergman Kernel approach, Das--Krishna \cite{das-krishna} have proved the bound $\| (\det Y)^{\frac{k}2}F\|_\infty \ll_\epsilon k^{\frac94+\epsilon} \|F\|_2$. There is also an exciting new approach to the sup-norm problem via 4th moments and theta kernels introduced by Steiner et al \cite{steiner20, khayutinsteiner, KNS22} which, if implemented for Siegel cusp forms, could potentially lead to strong bounds.

We end this introduction with a few words about the proof of Theorem \ref{t:mainfourier}, which should be viewed as the central result of this paper. The reader may be tempted to try to derive Theorem \ref{t:mainfourier} (or the stronger Theorem \ref{t:mainfouriergen}) from its known special case \cite{DPSS15} for $D$ a fundamental discriminant, by  using  the Hecke relations between fundamental and non-fundamental coefficients. However, this approach appears not to work. To see why, suppose that we know the values of all the fundamental Fourier coefficients and all the Hecke eigenvalues of $F$. Using the Hecke relations which can be expressed compactly via Sugano's formula as in \cite[Theorem 2.10]{kst2} we can then evaluate sums like $\sum_{\substack{S \in \Lambda_2 / \SL_2(\Z)\\  \pi^{-1}(L^{-1} S) = c } }a(F,S)$ with $c(S)=L$, $\disc(S)=L^2 D$ and $c$ a fixed class in $H_d$ where $d$ is the fundamental discriminant attached to $D$, and $\pi: H_d \rightarrow H_D$ is the natural map. However,  it does not seem possible to tell apart the individual summands above, i.e., this method cannot separate two non-equivalent, non-fundamental coefficients $a(F, S_1)$ and $a(F, S_2)$ of equal content $L$ in the case that $L^{-1}S_1$ and $L^{-1}S_2$ are images of the \emph{same} fundamental coefficient class under the natural map from $H_d \rightarrow H_D$.

Therefore we develop a different method for proving Theorem \ref{t:mainfourier}. We build upon the explicit refinement of B\"ocherer's conjecture introduced in \cite{DPSS15} and incorporate characters $\Lambda$ of $H_D$ which in general correspond to \emph{ramified} Hecke characters of $K=\Q(\sqrt{d})$. (In \cite{DPSS15}  we had restricted ourselves to ideal class group characters, i.e., characters of $H_d$; these correspond to unramified Hecke characters of $K$.) Using the refined GGP conjecture proved in \cite{FM22}, we write the left hand side of \eqref{e:theoremintro} as a deformation of the right hand side, where each summand is multiplied by a product over primes $p|D$ of certain  local quantities related to local Bessel models for ramified characters. Bounding these purely local quantities constitute the technical heart of this paper. This may be viewed as a \emph{depth aspect} bound for the local Bessel functions as the conductor of $\Lambda$ becomes large at the primes dividing $D$. For the main local statements, we refer the reader to Propositions \ref{p:localboundbesselsugano} and \ref{p:localboundJp}. In contrast to the similar local quantities for unramified characters computed in \cite{DPSS15}, the local quantities associated to ramified characters are not easy to compute explicitly, so we bound them via a soft approach where we exploit the volume of the support of the integrals. We refer the reader to Section \ref{s:proofprop2.2} for details of the argument. It would be of interest to compute these integrals exactly, as this would allow us to replace \eqref{e:theoremintro} by an exact identity.
\subsection*{Notation}We use the notation
$A \ll_{x,y,\ldots} B$
to signify that there exists
a positive constant $C$, depending at most upon $x,y,z$,
so that
$|A| \leq C |B|$. If the subscripts $x,y,\ldots$ are omitted, it means the constant is absolute. We write $A \asymp B$ to mean $A \ll B \ll A.$
 The symbol $\varepsilon$ will denote a small positive quantity.

 We say that an integer $d$ is a fundamental discriminant if $\Q(\sqrt{d})$ is a quadratic field whose discriminant is equal to $d$. For a fundamental discriminant $d$, we let $\chi_d$ be the associated quadratic Dirichlet character.
We use $\A$ to denote the ring of adeles over $\Q$ and for a number field $F$ we let $\A_F$ denote the ring of adeles over $F$. All $L$-functions in this paper will denote the finite part of the $L$-function (i.e., without the archimedean factors), so that for an automorphic representation $\pi$ of $\GL_n(\A)$, we have $L(s, \pi) = \prod_{p<\infty} L(s, \pi_p)$. All $L$-functions will be normalized to take $s \mapsto 1-s$. For a finite set of places $S$ we denote $L^S(s, \pi)=\prod_{p \notin S}L(s, \pi_p)$.

 For a commutative ring $R$, we define
\[
 \GSp_4(R)=\{g\in\GL_4(R):\:^tgJg=\mu(g)J,\:\mu(g)\in R^\times\},\qquad J=\left[\begin{smallmatrix}&&1\\&&&1\\-1\\&-1\end{smallmatrix}\right]
\]
Here, $\mu$ is called the similitude character. Let $\Sp_4(R) = \{g \in \GSp_4(R) : \mu(g) = 1\}$. We occasionally use $G$ to denote $\GSp_4$.

We let $M_2(R)$ denote the ring of 2 by 2 matrices over $R$, and let $M^\sym_2(R)$ be the additive subgroup of symmetric matrices.

Over the real numbers, we have the identity component $G(\R)^+:=\{g\in\GSp_4(\R):\mu(g)>0\}$.  Let $\H_2$ be the Siegel upper half space of degree $2$, i.e., the space $\H_2$ consists of the symmetric, complex $2\times 2$-matrices with positive definite imaginary parts. The group $G(\R)^+$ acts on $\H_2$ via $g\langle Z \rangle = (AZ+B)(CZ+D)^{-1}$ for $g = \mat{A}{B}{C}{D}$. We define $j(g, Z) = \det(CZ+D)$ for $g = \mat{A}{B}{C}{D} \in G(\R)^+$.

For a finite abelian group $H$ we let $\widehat{H}$ denote its group of characters.
\subsection*{Acknowledgments}The first and third-named authors acknowledge the support of the Engineering and Physical Sciences Research Council (grant number EP/W522508/1) and the Leverhulme Trust   (research project grant RPG-2018-401). The second-named author would like to thank for hospitality received at Kazimierz Wielki University where she wrote this work. We thank the anonymous referee for helpful comments which have improved this paper.
\section{Local calculations}\label{s:local}
In this section, which is purely local, we will study the  \emph{Bessel function} and \emph{Bessel integral} associated to a spherical vector in an irreducible, tempered, principal series representation of $\GSp_4(F)$ where $F$ is a non-archimedean local field of characteristic zero. Our main local results, Proposition~\ref{p:localboundbesselsugano} and Proposition~\ref{p:localboundJp}, quantify the growth of these quantities along  diagonal matrices.
\subsection{Basic facts and definitions}
\subsubsection{Preliminaries}Throughout Section \ref{s:local}, $F$ will be a non-archimedean local field of characteristic zero. Let $\OF$ be the ring of integers of $F$, with maximal ideal $\p$ and uniformizer $\varpi$. Let $\mathbf{k}=\OF/\p$ be the residue class field, and $q$ its cardinality. For $x \in F$, let $v(x)$ be the normalized valuation and let $| x | = q^{-v(x)}$ denote the normalized absolute value of $x$, so that $v(\varpi) = 1$, $|\varpi| = q^{-1}$.  Let $\psi$ be a character of $F$ which is trivial on $\OF$ but non-trivial on $\p^{-1}$.

We use the Haar measure $dx$ on $F$ that assigns $\OF$ volume 1, and we use the Haar measure $d^\times x$ on $F^\times$ that assigns $\OF^\times$ volume 1. So we have $d^\times x= (1-q^{-1})^{-1} \frac{ dx}{|x|}.$

\subsubsection{The Bessel subgroup} Following \cite{Fu} and \cite{PS1} we introduce the following notations. Let $a,b,c\in F$ such that  $d := b^2-4ac \neq 0$. Let
\begin{equation}\label{Sdefeq}
 S = \mat{a}{b/2}{b/2}{c}, \quad \Delta = \mat{b/2}{c}{-a}{-b/2}
\end{equation}
and note that $d= \disc(S) = -4 \det(S)$ and $\Delta^2=d/4$. If $d$ is not a square in $F^\times$, then let $K=F(\sqrt{d})$ and note that $K$ is isomorphic to $F(\Delta)$ via
\begin{equation}\label{isoeqinert}
F(\Delta) \stackrel{\sim}{\longrightarrow} K, \qquad x + y \Delta\longmapsto x + y \frac{\sqrt{d}}{2}.
\end{equation}
If $d$ is a square in $F^{\times}$, then let $K = F\oplus F$ and note that $K$ is isomorphic to $F(\Delta)$ via
\begin{equation}\label{isoeqsplit}
F(\Delta) \stackrel{\sim}{\longrightarrow} K, \qquad x + y \Delta\longmapsto \left(x + y \frac{\sqrt{d}}{2}, x - y \frac{\sqrt{d}}{2}\right).
\end{equation}

We define \begin{equation}\label{TFdefeq}
 T(F)=\{g\in\GL_2(F):\:^tgSg=\det(g)S\}.
\end{equation}
One can check that $T(F)=F(\Delta)^\times$, so that $T(F)\cong K^\times$ via the isomorphisms  \eqref{isoeqinert}, \eqref{isoeqsplit} above. We define the Legendre symbol as
\begin{equation}\label{legendresymboldefeq}
 \Big(\frac K\p\Big)=\begin{cases}
                      -1&\text{if $K/F$ is an unramified field extension},\\
                      0&\text{if $K/F$ is a ramified field extension},\\
                      1&\text{if }K=F\oplus F.
                     \end{cases}
\end{equation}
These three cases are referred to as the \emph{inert case}, \emph{ramified case}, and \emph{split case}, respectively. If $K$ is a field, then let $\OF_K$ be its ring of integers and $\p_K$ be the maximal ideal of $\OF_K$. If $K = F \oplus F$, then let $\OF_K = \OF \oplus \OF$.

Throughout, we will make the following \emph{standard assumptions} (see Section 1 of \cite{PS-BSI}),
\begin{equation}\label{standardassumptions}
 \begin{minipage}{90ex}
  \begin{itemize}
   \item$a, b \in\OF$ and $c\in\OF^\times$.
   \item If $d\notin F^{\times2}$, then $d$ is a generator of the discriminant of $K/F$.
   \item If $d\in F^{\times2}$, then $d\in\OF^\times$.
  \end{itemize}
 \end{minipage}
\end{equation}
Under these assumptions, the group $T(\OF):=T(F)\cap\GL_2(\OF)$ is isomorphic to $\OF_K^\times$ via the isomorphism $T(F)\cong K^\times$. Note also that these assumptions imply that if we are in the split case ($d\in F^{\times2}$) then $\frac{b \pm \sqrt{d}}{2} \in \OF$.

We consider $T(F)$ a subgroup of $\GSp_4(F)$ via
\begin{equation}\label{TFembeddingeq}
 T(F)\ni g\longmapsto\mat{g}{}{}{\det(g)\,^tg^{-1}}\in\GSp_4(F).
\end{equation}
Let $N(F)$ be the unipotent radical of the Siegel parabolic subgroup, i.e.,
$$N(F)=\{\mat{1_2}{X}{}{1_2}\in\GSp_4(F):\:^tX=X\}
$$
and let $R(F)=T(F)N(F)$. We call $R(F)$ the \emph{Bessel subgroup} of $\GSp_4(F)$.
\subsubsection{Bessel models} \label{s:besselmodels}
For $S$ as in \eqref{Sdefeq}, we define a character $\theta$ of $N(F)$ by
\begin{equation}\label{thetaSsetupeq}
 \theta(\left[\begin{smallmatrix} 1 &&x&y \\ &1&y&z \\ &&1& \\ &&&1 \end{smallmatrix}\right]) = \psi(ax+by+cz) = \psi (\trace(S\mat{x}{y}{y}{z}))
\end{equation}
for $x,y,z \in F$.  It is easily verified that $\theta(tnt^{-1})=\theta(n)$ for $n\in N(F)$ and $t\in T(F)$.

Let $\Lambda$ be any character of $K^\times$ such that $\Lambda |_{F^\times} = 1$. We identify $\Lambda$ with a character of $T(F)$ using the isomorphism $T(F) \simeq K^\times$. The map
$tu\mapsto\Lambda(t)\theta(u)$ defines a character of $R(F)$. We denote
this character by $\Lambda\otimes\theta$. Let $\mathcal{S}(\Lambda,\theta)$ be the space of all locally constant functions $B:\:\GSp_4(F)\rightarrow\C$ with the \emph{Bessel transformation property}
\begin{equation}\label{Besseltransformationpropertyeq}
 B(rg)=(\Lambda\otimes\theta)(r)B(g)\qquad\text{for all $r\in R(F)$ and $g\in \GSp_4(F)$}.
\end{equation}

Consider an irreducible, admissible, unitarizable, tempered representation $(\pi,V_\pi)$ of trivial central character. If  $(\pi,V)$ is isomorphic to a subrepresentation of $\mathcal{S}(\Lambda,\theta)$, then this realization of $\pi$ is called a \emph{$(\Lambda,\theta)$-Bessel model}. It is known that such a model, if it exists, is unique; we denote it by $\mathcal{B}_{\Lambda,\theta}(\pi)$.
Since $\pi$ is unitary, let $\langle \ , \ \rangle$ denote a $\GSp_4(F)$-invariant inner product (unique up to scaling) on $V_\pi$. For a vector $v \in V_\pi$, the normalized matrix coefficient attached to  $v$ is the function
\begin{equation}\label{matrixcoeff}
 \Phi_{v}(g)=\frac{\langle\pi(g)v,v\rangle}{\langle v, v \rangle}.
\end{equation}
Define
\begin{equation}\label{Besseldefn}
 J_{\Lambda, \theta}(v) := \int\limits_{F^\times \backslash T(F)} \int\limits_{N(F)}^{\st} \Phi_{v}(tn) \Lambda^{-1}(t) \theta^{-1}(n)\,dn\,dt,
\end{equation}
where $\int\limits_{N(F)}^{\st} := \lim_{k\rightarrow \infty}\int\limits_{N(\p^{-k})}$ denotes the stable integral \cite[Sect. 3.1]{yifengliu}.
It can be shown that the representation $\pi$ has a $(\Lambda, \theta)$-Bessel model (i.e., $\mathcal{B}_{\Lambda,\theta}(\pi)$ exists) \emph{if and only if} there is a non-zero vector $v$ in the space of $\pi$ such that $J_{\Lambda, \theta}(v) \neq 0$, in which case $v$ is said to be a $(\Lambda, \theta)$-test vector for $\pi$. We will refer to $J_{\Lambda, \theta}(v)$ as the \emph{local Bessel integral} (of type $(\Lambda, \theta)$) associated to $v$.

Suppose that the representation $\pi$ has a $(\Lambda, \theta)$-Bessel model.   Fix a realization of $\pi$ in $\mathcal{B}_{\Lambda,\theta}(\pi)$ and for each vector $v \in V_\pi$ let $B_v \in \mathcal{B}_{\Lambda,\theta}(\pi)$ denote its image. From uniqueness arguments it is easy to show that there exists a non-zero $c$ that depends on $\Lambda, \theta, \pi$ and on the choice of realization such that for all $v \in V_\pi$ and all $g \in \GSp_4(F)$ we have $|B_v(g)|^2 =  c \ J_{\Lambda, \theta}(\pi(g)v)$.

\subsubsection{Subgroups and characters of $T(\OF)$}\label{s:levelstructure}
Recall that $$T(\OF):=T(F)\cap\GL_2(\OF),$$ and that $T(\OF)$ is isomorphic to $\OF_K^\times$ via the isomorphisms \eqref{isoeqinert}, \eqref{isoeqsplit}. We define the subgroup $U_T(m) \subset T(\OF)$ via $$U_T(0) = T(\OF),$$ and for $m \ge 1$, $$U_T(m) = \{g \in T(\OF): g= \mat{\lambda}{}{}{\lambda} \bmod{\p^{m}},\,\text{ for some } \lambda\in\OF^\times \}.$$  Let $$\Delta_0:= \mat{0}{c}{-a}{-b}$$ so that $\Delta_0$ corresponds (under \eqref{isoeqinert}, \eqref{isoeqsplit} respectively) to the element $\delta_0:=\frac{-b + \sqrt{d}}{2}$ if $K$ is a field, and the element $\delta_0:= \left(\frac{-b + \sqrt{d}}{2}, \frac{-b - \sqrt{d}}{2}\right)$ if $K=F \oplus F$. One can show (using \cite[Sect. 3]{PS1}) that \begin{equation}\label{e:Delta0OK}\OF_K = \{x + y \delta_0: x \in \OF, y \in \OF\}\end{equation} and for $m \ge 1$, $$U_T(m) = \{x + y \Delta_0: x \in \OF^\times, y \in \p^m\}.$$
Using the above description, it is easy to see that \eqref{isoeqinert}, \eqref{isoeqsplit} induce isomorphisms $$U_T(m) \stackrel{\sim}{\longrightarrow} \OF^\times\left(1+\p^m\OF_K\right)$$ for each $m\ge 1$.

Any character $\Lambda$ of $K^\times$ satisfying $\Lambda|_{F^\times}=1$ can be identified with a character of $F^\times \bs T(F)$. From the above description, it follows that such a character must be trivial on $U_T(m) \simeq \OF^\times\left(1+\p^m\OF_K\right)$ for some $m$. We define $$c(\Lambda) = \min\{m \ge 0: \Lambda|_{U_T(m)} = 1\}.$$

\subsection{Main results}For the rest of Section \ref{s:local}, let $\pi$ be a tempered, spherical, irreducible principal series representation of $\GSp_4(F)$, i.e., $\pi$ is a tempered representation of Type I in the notation of \cite{NF}. We also assume throughout that $\pi$ is of trivial central character. In particular, $\pi$ is the unramified constituent of a representation $\chi_1 \times \chi_2\rtimes \sigma$ induced from a character of the Borel subgroup associated to unramified characters $\chi_1, \chi_2, \sigma$ of $F^\times$ satisfying $\chi_1 \chi_2 \sigma^2 =1$. We put
$$\alpha=\sigma(\varpi), \quad \beta = \sigma(\varpi)\chi_1(\varpi).$$
Note that the temperedness of $\pi$ implies that $\chi_i$ and $\sigma$ are unitary and therefore $|\alpha| = |\beta| = 1.$ Let $\phi$ be a (unique up to multiples) spherical vector in $V_\pi$, i.e., $\phi$ is fixed by the subgroup $\GSp_4(\OF)$.

It is known (see, e.g., Table 2 of \cite{PS1}) that $\mathcal{B}_{\Lambda,\theta}(\pi)$ exists for all characters $\Lambda$ of $F^\times \bs T(F)$. For any such character $\Lambda$ we let $B_{\phi, \Lambda} \in \mathcal{B}_{\Lambda,\theta}(\pi)$ be the element corresponding to $\phi$ under some choice of isomorphism $\pi \simeq \mathcal{B}_{\Lambda,\theta}(\pi)$.

 For $\ell, m \in \Z$, let
\begin{equation}\label{hlmdefeq}
 h(\ell,m)=\begin{bmatrix}\varpi^{\ell+2m}\\&\varpi^{\ell+m}\\&&1\\&&&\varpi^m\end{bmatrix}.
\end{equation}
Using the Iwasawa decomposition, one can show that
\begin{equation}\label{SuganoHFdecompositioneq}
 \GSp_4(F)=\bigsqcup_{\substack{\ell,m\in\Z\\m\geq0}}R(F)h(\ell,m)\GSp_4(\OF);
\end{equation}
cf.\ (3.4.2) of \cite{Fu}.
\subsubsection{Growth of the local Bessel function}
We keep the setup as above. We refer to the function $B_{\phi,\Lambda}$ as the local (spherical) Bessel function of type $(\Lambda, \theta)$. Note that this function depends on our choice of realization of $\pi$ in $\mathcal{B}_{\Lambda,\theta}(\pi)$. However, given $\pi$, $\Lambda$ and $\theta$, this function is canonically specified up to multiples. Our next result bounds the growth of this function along the elements $h(\ell, m)$.
\begin{proposition}\label{p:localboundbesselsugano}Let $\phi \in V_\pi$ be a spherical vector. For a character $\Lambda$ of $F^\times\bs T(F)$, we have $B_{\phi,\Lambda}(h(0, c(\Lambda))) \neq 0$. Furthermore, for non-negative integers $\ell$, $m$  satisfying $m \ge c(\Lambda)$, we have
$$\frac{B_{\phi,\Lambda}(h(\ell, m))}{B_{\phi,\Lambda}(h(0, c(\Lambda)))} \ll (m-c(\Lambda)+1)^3(\ell+1)^3 q^{-2(m - c(\Lambda)) - 3 \ell/2}.$$
The implied constant is absolute.
\end{proposition}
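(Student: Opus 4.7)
I would attack Proposition~\ref{p:localboundbesselsugano} by combining two ingredients: (i) a level-raising argument that locates the first non-vanishing value of $B_{\phi,\Lambda}(h(\ell,m))$ at $(\ell,m)=(0,c(\Lambda))$, and (ii) a Sugano-type Hecke recursion along diagonal matrices that gives, after the base case is fixed, a finite linear recursion for $B_{\phi,\Lambda}(h(\ell,m))$ whose characteristic roots lie on circles of radius $q^{-3/2}$ and $q^{-2}$. The spherical eigenvalues are Laurent polynomials in $\alpha,\beta$ with $|\alpha|=|\beta|=1$, so the geometric decay rate is dictated by the modulus character on the support of the Hecke operators, while the polynomial factors $(\ell+1)^3(m-c(\Lambda)+1)^3$ come from possible multiplicities of the characteristic roots.

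\textbf{Step 1: vanishing for $m<c(\Lambda)$ and non-vanishing base case.} I would first use the transformation property \eqref{Besseltransformationpropertyeq} together with the observation that $h(\ell,m)$ commutes with, or conjugates into $\GSp_4(\OF)$, a large subgroup of $T(\OF)$; comparing the action of $T(\OF)$ on $B_{\phi,\Lambda}(h(\ell,m)\cdot)$ via $\Lambda$ with its action via $\pi(\GSp_4(\OF))$-invariance of $\phi$, one forces $B_{\phi,\Lambda}(h(\ell,m))=0$ whenever $\Lambda$ is non-trivial on the corresponding copy of $U_T(m)$, i.e.\ when $m<c(\Lambda)$; similarly $B_{\phi,\Lambda}(h(\ell,m))=0$ for $\ell<0$. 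At the critical level $(\ell,m)=(0,c(\Lambda))$ the level conditions become compatible, and non-vanishing of $B_{\phi,\Lambda}(h(0,c(\Lambda)))$ follows from the existence of a $(\Lambda,\theta)$-test vector in $\pi$ combined with the identity $|B_v(g)|^2 = c\,J_{\Lambda,\theta}(\pi(g)v)$, choosing $g = h(0,c(\Lambda))$ to match the level of $\Lambda$.

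\textbf{Step 2: Sugano recursion and generating function.} The spherical vector $\phi$ is an eigenvector for the two generators of the spherical Hecke algebra $\mathcal{H}(\GSp_4(F),\GSp_4(\OF))$ with eigenvalues that are symmetric Laurent polynomials in $\alpha,\beta$. I would translate each eigenvalue equation into a relation among the values $B_{\phi,\Lambda}(h(\ell+\ell',m+m'))$ by decomposing the relevant Hecke double cosets according to the decomposition \eqref{SuganoHFdecompositioneq}; for $m\geq c(\Lambda)$ the character $\Lambda$ contributes a fixed twist, so the coefficients of the recursion become exactly those of the unramified Sugano recursion of \cite{Fu}. Packaging the values $B(\ell,m):=B_{\phi,\Lambda}(h(\ell,m+c(\Lambda)))/B_{\phi,\Lambda}(h(0,c(\Lambda)))$ into a generating series $G(x,y)=\sum_{\ell,m\geq 0}B(\ell,m)x^\ell y^m$, the recursion shows that $G(x,y)=N(x,y)/(P(x)Q(y))$ where $P$ and $Q$ are the polynomials appearing in Sugano's formula; since $|\alpha|=|\beta|=1$ their roots have absolute values $q^{3/2}$ and $q^2$ respectively.

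\textbf{Step 3: extracting the bound.} Expanding $1/(P(x)Q(y))$ as a power series, each coefficient is a sum of terms of the shape $\binom{\ell+j}{j}\binom{m+k}{k}\gamma_1^\ell\gamma_2^m$ with $|\gamma_1|=q^{-3/2}$, $|\gamma_2|=q^{-2}$ and $j,k\leq 3$ (since $P,Q$ have degree $4$ and their roots may coincide when $\alpha,\beta$ degenerate). Multiplying by $N(x,y)$ introduces only bounded shifts in $\ell,m$. This gives $|B(\ell,m)|\ll(\ell+1)^3(m+1)^3 q^{-3\ell/2-2m}$, which after reindexing by $m-c(\Lambda)$ is exactly the claimed bound. \textbf{Main obstacle.} The delicate point is Step~2: one must verify that the Sugano recursion remains valid in the presence of a ramified $\Lambda$ once we have $m\geq c(\Lambda)$, which amounts to checking that the Hecke double coset representatives in each decomposition can be arranged so that their $T(F)$-parts always lie in the kernel of $\Lambda$ restricted to the appropriate $U_T(m)$, and that the boundary terms at $m=c(\Lambda)$ do not spoil the generating function identity; a careful case analysis according to $(\tfrac{K}{\p})$ in \eqref{legendresymboldefeq} will be needed here.
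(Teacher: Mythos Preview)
Your overall strategy---Sugano's generating function identity
\[
\sum_{\ell,m\geq 0} B_{\phi,\Lambda}(h(\ell,m+c(\Lambda)))\,x^m y^\ell \;=\; B_{\phi,\Lambda}(h(0,c(\Lambda)))\,\frac{H(x,y)}{P(x)Q(y)}
\]
followed by power-series extraction using $|\alpha|=|\beta|=1$---is exactly what the paper does, and your Step~3 matches the paper's argument almost verbatim. Two points of divergence are worth flagging.

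First, your ``main obstacle'' is not an obstacle at all: Sugano's original paper \cite[Theorem~2.1 and Proposition~2.5]{Sugano1985} already establishes the generating function identity for \emph{arbitrary} $\Lambda$, not only unramified ones, with an explicit numerator $H(x,y)$ that depends only on whether $c(\Lambda)=0$ or $c(\Lambda)>0$ (and, in the former case, on $(\tfrac{K}{\p})$). The paper simply quotes this formula and then checks by direct inspection that every coefficient of $H$ satisfies $|h_{\tilde m,\tilde\ell}| \ll q^{-2\tilde m - 3\tilde\ell/2}$ with an absolute constant; combined with the geometric expansion of $1/(P(x)Q(y))$ this gives the bound immediately. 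There is no need to re-derive the Hecke recursion or to carry out the boundary case analysis you anticipate.

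Second, your non-vanishing argument in Step~1 has a gap. The existence of \emph{some} test vector $v$ with $J_{\Lambda,\theta}(v)\neq 0$ does not by itself give $J_{\Lambda,\theta}(\phi^{(0,c(\Lambda))})\neq 0$, which is what $B_{\phi,\Lambda}(h(0,c(\Lambda)))\neq 0$ amounts to via the identity $|B_v(g)|^2 = c\,J_{\Lambda,\theta}(\pi(g)v)$ you invoke. One can argue that $B_{\phi,\Lambda}$ is not identically zero (since $\phi$ generates $\pi$ and the model is nonzero), and combined with the vanishing for $m<c(\Lambda)$ or $\ell<0$ this forces a nonzero value at \emph{some} $h(\ell,m)$ with $\ell\geq 0$, $m\geq c(\Lambda)$; but locating it precisely at $(0,c(\Lambda))$ requires the full structure of the recursion (all other values are determined by this one), or---as the paper does---a direct citation of Sugano, who proves $B_{\phi,\Lambda}(h(0,c(\Lambda)))\neq 0$ explicitly.
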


\subsubsection{Growth of the local Bessel integral}
We keep the setup as above. For brevity, denote $$\phi^{(\ell,m)} = \pi(h(\ell,m))\phi.$$ We are interested in bounding the quantity $J_{\Lambda, \theta}(\phi^{(\ell,m)})$ for $m \ge c(\Lambda)$. Due to Proposition \ref{p:localboundbesselsugano} and the discussion in Section \ref{s:besselmodels} it suffices to consider the case $\ell=0$, $m=c(\Lambda)$. We prove the following bound.

\begin{proposition}\label{p:localboundJp}Let $\phi \in V_\pi$ be a spherical vector and $\Lambda$ be a character of $F^\times\bs T(F)$. If the residue field characteristic $q$ is even and $c(\Lambda)=0$, assume that $F= \Q_2$.
We have
$$J_{\Lambda, \theta}(\phi^{(0,c(\Lambda))}) \ll (c(\Lambda) + 1)^{6} q^{-4c(\Lambda)}.$$ The implied constant is absolute.
\end{proposition}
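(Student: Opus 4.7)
The plan is to prove Proposition~\ref{p:localboundJp} by splitting into two cases based on the conductor $c(\Lambda)$.

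When $c(\Lambda) = 0$, we have $h(0,c(\Lambda)) = I_4$ and $\phi^{(0,c(\Lambda))} = \phi$, so $J_{\Lambda,\theta}(\phi)$ is the unramified Bessel integral. This can be evaluated explicitly using Sugano's formula in terms of the Satake parameters $\alpha,\beta$ of $\pi$, cf.\ \cite{DPSS15}. Since $|\alpha|=|\beta|=1$ by temperedness, the explicit expression yields an absolute $O(1)$ bound, matching $(0+1)^6 q^0 = 1$. The assumption $F=\Q_2$ when $q$ is even at $c(\Lambda)=0$ is needed to control the structure of $T(\OF)$ in wild dyadic cases, where the explicit formula is more delicate.

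For $c(\Lambda) \ge 1$ we use a soft volume-of-support argument. Setting $h := h(0, c(\Lambda))$, unitarity of $\pi$ gives $\Phi_{\phi^{(0,c(\Lambda))}}(tn) = \Phi_\phi(h^{-1} t n h)$, so
$$J_{\Lambda,\theta}(\phi^{(0,c(\Lambda))}) = \int_{F^\times \bs T(F)} \int_{N(F)}^{\st} \Phi_\phi(h^{-1} t n h)\, \Lambda^{-1}(t)\, \theta^{-1}(n)\, dn\, dt.$$
A direct matrix computation shows that conjugation by $h^{-1}$ scales the three symmetric coordinates $(u,v,w)$ of $n \in N(F)$ by $(\varpi^{-2c(\Lambda)}, \varpi^{-c(\Lambda)}, 1)$. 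A change of variable $n \mapsto hnh^{-1}$ in the inner integral therefore contributes a Jacobian $q^{-3c(\Lambda)}$ and transforms the character into $\theta_{c(\Lambda)}(n) = \psi(a\varpi^{2c(\Lambda)}u + b\varpi^{c(\Lambda)}v + cw)$.

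We then bound by the volume of the effective support. Using $|\Phi_\phi(g)| \le 1$ (with Harish-Chandra's $|\Phi_\phi| \ll \Xi$ providing polynomial refinements), the integrand is dominated by the characteristic function of a set controlled by three conditions: (i) $h^{-1}tnh$ must be close, in the Cartan decomposition, to $\GSp_4(\OF)$; (ii) the $t$-integration is cut down by orthogonality of $\Lambda$ on $U_T(c(\Lambda))$-cosets to a subset of volume $\ll q^{-c(\Lambda)}$; (iii) the $n$-integration becomes effectively compact through the stable integral regularization combined with the oscillation of $\theta_{c(\Lambda)}$. Collecting the Jacobian $q^{-3c(\Lambda)}$, the torus volume $q^{-c(\Lambda)}$, and a polynomial factor $(c(\Lambda)+1)^6$ arising from the count of Cartan strata contributing non-negligibly (analogous to and building on the polynomial factors appearing in Proposition~\ref{p:localboundbesselsugano}) yields the claimed bound $(c(\Lambda)+1)^6 q^{-4c(\Lambda)}$.

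The main obstacle is the rigorous identification of the effective support and its volume. The Cartan decomposition of $h^{-1}tnh$ depends nontrivially on $(t,n)$, and the interplay with the conductor conditions on $\Lambda$ and $\theta_{c(\Lambda)}$, together with the stable integral regularization, requires separate analysis in the inert, split, and ramified settings for $K/F$. Obtaining the precise polynomial exponent $6$ in $(c(\Lambda)+1)^6$, rather than a loose one, is the most delicate part, as it requires tight bookkeeping of the dimensions of the relevant Cartan strata.
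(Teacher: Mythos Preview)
Your treatment of the case $c(\Lambda)=0$ is correct and matches the paper: this is exactly the unramified computation carried out in \cite{DPSS15}.

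For $c(\Lambda)\ge 1$, your overall strategy (conjugate by $h$, decompose the $N$-integral along Cartan cells, use Macdonald-type bounds on $\Phi_\phi$, and keep track of volumes) is the same as the paper's, but step~(ii) contains a genuine gap. You claim that ``the $t$-integration is cut down by orthogonality of $\Lambda$ on $U_T(c(\Lambda))$-cosets to a subset of volume $\ll q^{-c(\Lambda)}$''. This is not correct: orthogonality cannot collapse the $t$-sum, because after fixing $n$ the integrand $\Phi_\phi(h^{-1}tnh)$ is \emph{not} constant on $U_T(m)$-cosets. What one actually obtains is
\[
J_{\Lambda,\theta}(\phi^{(0,m)}) \;=\; \vol(\OF^\times\bs U_T(m)) \sum_{t_i} \Lambda^{-1}(t_i)\, J_0(\phi^{(0,m)},t_i),
\]
with $\vol\asymp q^{-m}$, and one must then bound $\sum_i |J_0(\phi^{(0,m)},t_i)|$. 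In the split case the set of representatives $t_i$ is \emph{infinite} (indexed by $\Z$), so a naive volume argument would diverge. The missing ingredient is that $J_0(\phi^{(0,m)},t)$ carries an extra decay $q^{-j/2}$, where $j\ge 0$ is the Cartan parameter of $h(m)^{-1}t\,h(m)$ in $\GL_2$; this is the content of the paper's Proposition~\ref{p:GeneralBound}. One then has to compute $j$ explicitly for each representative in Lemma~\ref{representatives} and check, case by case (inert, ramified, split), that the weighted sum over representatives converges and is $\ll m\,q^{-3m}$.

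A second gap is in step~(iii): saying that the stable integral ``becomes effectively compact'' is not enough. The paper establishes precise vanishing of the contribution from far-out Cartan cells via two concrete changes of variable in $Y$ (namely $Y\mapsto \lambda Y$ for $\lambda\in 1+\p$, and $Y\mapsto Y+\varpi^{-1-v(2)}S'^{-1}$), which exploit the oscillation of $\psi$ and the shape of $S'$. Without such a mechanism you cannot truncate the sum over $(\ell',m')$ to $O((m+j)^2)$ terms, and the polynomial exponent would not come out.
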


\subsection{Proof of Proposition \ref{p:localboundbesselsugano}}
Fix a character $\Lambda$ of $F^{\times}\backslash T(F)$  and recall the definitions of the parameters $\a, \b$ attached to the representation $\pi$, and of the local spherical Bessel function $B_{\phi,\Lambda} \in \mathcal{B}_{\Lambda,\theta}(\pi)$ of type $(\Lambda,\theta)$. It is known that $B_{\phi,\Lambda}(h(\ell,m))=0$ whenever $\ell <0$ or $m<c(\Lambda)$, cf. \cite[Lemma 3.4.4]{Fu} and \cite[Lemma 2.5]{Sugano1985}.
\subsubsection{A formula due to Sugano} In \cite[Theorem 2.1 and Proposition 2.5]{Sugano1985} Sugano proved that $B_{\phi,\Lambda}(h(0,c(\Lambda)))\neq 0$ and provided a formula for the generating function of $B_{\phi,\Lambda}(h(\ell,m))$. We write it down in a form which will be convenient for the proof of Proposition \ref{p:localboundbesselsugano}. Namely:
\begin{equation}\label{eq:sug-formula}
\sum_{\ell, m \ge 0}B_{\phi,\Lambda}(h(\ell,m+c(\Lambda)))x^my^\ell = B_{\phi,\Lambda}(h(0,c(\Lambda)))\frac{H(x,y)}{P(x)Q(y)} ,
\end{equation}
where
$$P(x)=(1-\a\b q^{-2}x)(1-\a\b^{-1} q^{-2}x)(1-\a^{-1}\b q^{-2}x)(1-\a^{-1}\b^{-1} q^{-2}x)$$
$$Q(y)=(1-\a q^{-3/2}y)(1-\b^{-1} q^{-3/2}y)(1-\a^{-1} q^{-3/2}y)(1-\b^{-1} q^{-3/2}y)$$
and
\begin{itemize}
\item in case $c(\Lambda)>0$:
$$H(x,y)= 1+xq^{-2}-xyq^{-7/2}\sigma(\a,\b) + xy^2q^{-5} + x^2y^2q^{-7}$$
\item in case $c(\Lambda)=0$:
\begin{align*}
H(x,y) = & 1+xq^{-2}(1+\delta(\a,\b)) + x^2q^{-4}\( q^{-1}\(\frac{K}{\p}\) +\delta(\a,\b) +q^{-1/2}\epsilon\sigma(\a,\b)\)\\
& + x^3q^{-7}\(\frac{K}{\p}\)\\
& +y\left[ -q^{-2}\epsilon +xq^{-7/2}\(q^{-1/2}\epsilon\tau(\a,\b)-q^{-1/2}\epsilon -\sigma(\a,\b) \)\right.\\
& \hspace{1cm} + x^2q^{-6}\(\epsilon(\tau(\a,\b) -1-\sigma(\a,\b)^2) -q^{1/2}\delta(\a,\b)\sigma(\a,\b)\)\\
& \hspace{1cm}\left. -x^3q^{-8}\( q^{-1/2}\(\frac{K}{\p}\)\sigma(\a,\b)+\epsilon\)\right]\\
& +y^2\left[-q^{-4}\(\frac{K}{\p}\) + xq^{-5}\( 1+q^{-1}\(\frac{K}{\p}\) (\tau(\a,\b) -2)\)\right.\\
& \hspace{1cm} + x^2q^{-7}\( 1 +\delta(\a,\b) + q^{-1}\(\frac{K}{\p}\)(\sigma(\a,\b)^2-2\tau(\a,\b) +2)\)\\
& \hspace{1cm} \left. +x^3q^{-9}\( q^{-1}\(\frac{K}{\p}\) -q^{-1}\( \frac{K}{\p}\) \delta(\a,\b) +q^{-1}\epsilon^2\)\right]
\end{align*}
\end{itemize}
where
$$\delta(\a,\b)=\frac{q^{1/2}}{q+\(\frac{K}{\p}\)}\( q^{-1/2}\epsilon^2 +2q^{-1/2}\(\frac{K}{\p}\)-\epsilon \sigma(\a,\b) -q^{-1/2}\(\frac{K}{\p}\)\tau(\a,\b) \) ,$$
$$\epsilon =\begin{cases} 0, &\( \frac{K}{\p}\) =-1,\\
\Lambda(\varpi_K), &\( \frac{K}{\p}\) =0,\\
\Lambda((\varpi,1))+\Lambda ((1,\varpi)), &\( \frac{K}{\p}\) =1,\end{cases}$$
with $\varpi_K$ a uniformizer of $\p_K$, and
$$\sigma(\a,\b)=\a +\b +\a^{-1} +\b^{-1} ,$$
$$\tau(\a,\b)=\a\b + \a\b^{-1} + \a^{-1}\b + \a^{-1}\b^{-1}+2 .$$

\subsubsection{The required bound}
The proof of Proposition \ref{p:localboundbesselsugano} will follow from the formula \eqref{eq:sug-formula}. For the sake of brevity, denote:
$$a(m,\ell):=B_{\phi,\Lambda}(h(\ell,m+c(\Lambda)))(B_{\phi,\Lambda}(h(0,c(\Lambda))))^{-1},\qquad
H(x,y)=\sum_{m=0}^3\sum_{\ell=0}^2 h_{m,\ell}x^my^\ell .$$
Recall that  $|\a|=|\b|=1$. Hence, since $|\epsilon|\leq 2$, it follows that $|\delta(\a,\b)|< 28 q^{-1/2}$
and we may derive good bounds for $|h_{m,\ell}|$; they are listed in Table \ref{table:bound-for-h(m,l)}.
\begin{table}[ht] 
\caption{The values of $\Xi_{m,\ell}$ for which $|h_{m,\ell}|\leq\Xi_{m,\ell}$.}\label{table:bound-for-h(m,l)}
\setlength{\extrarowheight}{0.1cm}
\begin{subtable}{.9\linewidth}
\centering
\begin{tabular}{|c|c|c|c|}
\hline
\diagbox[height=0.7cm]{$m$}{$\ell$} & 0 & 1 & 2 \\
\hline
0					& $1$ & $2q^{-2}$ & $q^{-4}$\\
\hline
1 				& $q^{-2}+28q^{-5/2}$ & $10q^{-4}+4q^{-7/2}$ & $q^{-5}+4q^{-6}$ \\
\hline
2 				& $q^{-5}+36q^{-9/2}$ & $134q^{-6}$ & $q^{-7}+28q^{-15/2}+6q^{-8}$ \\
\hline
3 				& $q^{-7}$ & $2q^{-8}+4q^{-17/2}$ & $5q^{-10}+28q^{-21/2}$ \\\hline
\end{tabular}
\caption{In case $c(\Lambda)=0$.}
\end{subtable}\\\vspace{0.5cm}
\begin{subtable}{.9\linewidth}
\centering
\begin{tabular}{|c|c|c|c|}
\hline
\diagbox[height=0.7cm]{$m$}{$\ell$} & 0 & 1 & 2 \\
\hline
0					& $1$ & $0$ & $0$\\
\hline
1 				& $q^{-2}$ & $4q^{-7/2}$ & $q^{-5}$ \\
\hline
2 				& $0$ & $0$ & $q^{-7}$ \\
\hline
3 				& $0$ & $0$ & $0$ \\\hline
\end{tabular}
\caption{In case $c(\Lambda)>0$.}
\end{subtable}%
\end{table}

\noindent Note in particular that for all $m,\ell\geq 0$:
$$h_{m,\ell}\ll q^{-2m-\frac32 \ell} .$$
Using the geometric series expansion of $(P(x)Q(y))^{-1}$ in the formula \eqref{eq:sug-formula}, we obtain
\begin{align*}
a(m,\ell) = & \sum_{\tilde{m}=0}^3\sum_{\tilde{l}=0}^2\(\sum_{\substack{(m_1,m_2,m_3,m_4)\in\Z^4_{\geq 0}\\ m_1+m_2+m_3+m_4=m-\tilde{m}}}\hspace{-1cm} \alpha^{m_1+m_2-m_3-m_4}\beta^{m_1-m_2+m_3-m_4}\)\(\sum_{\substack{(l_1,l_2,l_3,l_4)\in\Z^4_{\geq 0}\\ l_1+l_2+l_3+l_4=\ell-\tilde{l}}}\hspace{-0.5cm} \alpha^{l_1-l_3}\beta^{l_2-l_4}\)\\
&\hspace{0.5cm}\times h_{\tilde{m},\tilde{l}}q^{-2(m-\tilde{m})}q^{-\frac32 (\ell-\tilde{l})}
\end{align*}
Hence, because $|\a|=|\b|=1$,
\begin{equation}\label{eq:explicit-bound}
|a(m,\ell)q^{2m+\frac32 \ell}|\leq \sum_{\tilde{m}=0}^3\sum_{\tilde{l}=0}^2 (m-\tilde{m}+1)^3(\ell-\tilde{l}+1)^3 |h_{\tilde{m},\tilde{l}}|q^{2\tilde{m}+\frac32 \tilde{l}} ,
\end{equation}
and from Table \ref{table:bound-for-h(m,l)} we see that $|h_{\tilde{m},\tilde{l}}|q^{2\tilde{m}+\frac32 \tilde{l}}$ is bounded above by an absolute and effective constant. This proves that
$$B_{\phi,\Lambda}(h(\ell,m+c(\Lambda)))\ll (m+1)^3(\ell+1)^3 q^{-2m-\frac32 \ell}|B_{\phi,\Lambda}(h(0,c(\Lambda)))|$$
where the implied constant is absolute and effective.

\subsection{Bounding the local Bessel integral}\label{s:proofprop2.2}

\subsubsection{A set of coset representatives}
As a starting point, we write down for each $m\ge 1$ a set of coset representatives for  $F^\times U_T(m) \bs T(F) \simeq  F^\times (1+\p^m \OF_K)  \bs  K^\times$. Recall the definition of $\Delta_0$ from Section \ref{s:levelstructure}.

In the next lemma when we write $y \in \OF/\p^m$ or $y \in \p/\p^m$, we mean that $y$ ranges over a set of coset representatives in $\OF$ for these quotients. Furthermore, we choose the coset representative for 0 to have valuation $m$ (this is not required, but simplifies the exposition of the proof).
\begin{lemma}\label{representatives}
For each integer $m>0$, let the sets $D$ and $S_m$ be as defined below.
\begin{enumerate}
    \item if $K/F$ is an unramified field extension, let  $D = \{1\}$ and
 $$S_m=\{1+y\Delta_0 : y \in \OF/\p^m \}  \cup \{x+\Delta_0 : x \in \p/\p^m\}.$$
     \item if $K/F$ is a ramified field extension, choose  $P_0 = x_0 + y_0\Delta_0 = \mat{x_0}{y_0c}{-y_0a}{x_0 - y_0b}$ such that $x_0, y_0 \in \OF$ and $\det(P_0) \in \varpi\OF^\times$ (such a choice is possible by \eqref{e:Delta0OK}). Let $ D=\{1,P_0\}$ and
    $$S_m=\{1+y\Delta_0 : y \in \OF/\p^m \}.$$
    \item if $K=F \times F$, recall that $\sqrt{d} \in \OF^\times$, $\frac{b+\sqrt{d}}{2} \in \OF$, and  let  $P_0 = 1+ \frac{b+\sqrt{d}}{2\sqrt{d}}(\varpi -1) + \frac{\varpi-1}{\sqrt{d}}\Delta_0$. Let $ D=\{P_0^n: n \in \Z\}$ and
    $$S_m=\{1+\frac{b+\sqrt{d}}{2}y+ y\Delta_0 : y \in \OF/\p^m, y \notin \frac{-1}{\sqrt{d}} +\p \}.$$
  \end{enumerate}
Then in each case the set $D S_m := \{s_1s_2: s_1 \in D, \ s_2 \in S_m \}$ gives a complete set of representatives for the quotient $F^\times U_T(m) \bs T(F)$.
\end{lemma}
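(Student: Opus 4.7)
The plan is to reformulate the problem via the isomorphism $T(F)\simeq K^\times$ from \eqref{isoeqinert}--\eqref{isoeqsplit}, together with the identification $U_T(m)\simeq\OF^\times(1+\p^m\OF_K)$ of Section~\ref{s:levelstructure}. Under these identifications, the claim becomes the assertion that $DS_m$ provides representatives for $F^\times(1+\p^m\OF_K)\bs K^\times$. I would then decompose this quotient into an outer part $K^\times/F^\times\OF_K^\times$ (to be represented by $D$) and an inner part $\OF_K^\times/(\OF^\times(1+\p^m\OF_K))$ (to be represented by $S_m$); the two pieces glue correctly because $F^\times(1+\p^m\OF_K)\cap\OF_K^\times=\OF^\times(1+\p^m\OF_K)$, which in turn follows from the elementary fact $F^\times\cap\OF_K^\times=\OF^\times$.

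The outer quotient $K^\times/F^\times\OF_K^\times$ is governed by the ramification index $e$ of $K/F$ when $K$ is a field, and is infinite cyclic in the split case. The three cases of the lemma correspond respectively to $e=1$ (inert, $D=\{1\}$), $e=2$ (ramified, $D=\{1,P_0\}$), and the split case ($D=\{P_0^n:n\in\Z\}$). The verification that $P_0$ is the correct generator reduces to a valuation computation: in the ramified case, the condition $\det(P_0)=N_{K/F}(P_0)\in\varpi\OF^\times$ forces $v_K(P_0)=1$, so $P_0$ is a uniformizer of $K$; in the split case, a direct computation via \eqref{isoeqsplit} (exploiting $\sqrt d\in\OF^\times$) yields $P_0\mapsto(\varpi,1)\in F^\times\oplus F^\times$, which clearly generates $K^\times/F^\times\OF_K^\times\simeq F^\times/\OF^\times\simeq\Z$.

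For the inner quotient, the key elementary identification is
\begin{equation*}
\OF^\times(1+\p^m\OF_K)=\{r+s\Delta_0:\,r\in\OF^\times,\,s\in\p^m\},
\end{equation*}
which follows by expanding $\lambda(1+\varpi^m(a+b\Delta_0))$ for $\lambda\in\OF^\times$ and $a,b\in\OF$, and using \eqref{e:Delta0OK}. With this description, pairwise inequivalence of the candidates in $S_m$ reduces to a direct computation of the ratio $(1+y_1\Delta_0)(1+y_2\Delta_0)^{-1}$, exploiting the quadratic relation $\Delta_0^2+b\Delta_0+ac=0$ and the resulting norm formula $N(1+y\Delta_0)=1-by+acy^2$. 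A cardinality count confirms exhaustion: the quotient $\OF_K^\times/(\OF^\times(1+\p^m\OF_K))$ has order $q^m+q^{m-1}$ in the inert case, $q^m$ in the ramified case, and $q^m-q^{m-1}$ in the split case, matching $|S_m|$ in each case.

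The main technical subtlety arises in the split case, where the exclusion $y\not\equiv -1/\sqrt d\pmod{\p}$ in the definition of $S_m$ must be shown to single out exactly those $y$ for which $1+\tfrac{b+\sqrt d}{2}y+y\Delta_0$ fails to lie in $\OF_K^\times$. Under \eqref{isoeqsplit}, this element maps to $(1+y\sqrt d,\,1)\in F\oplus F$, and the verification reduces to the observation that $y\mapsto 1+y\sqrt d$ induces a bijection from $\{y\in\OF/\p^m:y\not\equiv -1/\sqrt d\pmod{\p}\}$ onto $(\OF/\p^m)^\times$, which is straightforward since $\sqrt d\in\OF^\times$ by the standard assumptions \eqref{standardassumptions}.
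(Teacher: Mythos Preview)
Your proposal is correct and follows essentially the same approach as the paper: reduce via the isomorphism $T(F)\simeq K^\times$ to showing that $D$ represents $F^\times\OF_K^\times\bs K^\times$ and $S_m$ represents $\OF^\times(1+\p^m\OF_K)\bs\OF_K^\times$, then handle the three cases by computing the image of $P_0$ and matching cardinalities. The only cosmetic differences are that the paper cites \cite[Lemma~3.5.3]{Fu} for the inert cardinality while you compute it directly, and the paper verifies exhaustion rather than pairwise inequivalence for $S_m$; either route, combined with the cardinality count, suffices.
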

\begin{proof}It suffices to show that the image of $D$ under \eqref{isoeqinert} or \eqref{isoeqsplit} gives a complete set of representatives for $F^\times \OF_K^\times\bs K^\times$ and the image of $S_m$ under \eqref{isoeqinert} or \eqref{isoeqsplit} gives a complete set of representatives for $\OF^\times(1+\p^m \OF_K)\bs \OF_K^\times$.

{\bf \underline{Inert case:}} Assume $K/F$ is an unramified extension.
Then $F^\times \OF_K^\times = K^\times$, which agrees with $D =\{1\}$. To show that the image of $S_m$ under \eqref{isoeqinert} gives a complete set of representatives for $\OF^\times(1+\p^m \OF_K)\bs \OF_K^\times$, note first that $S_m$ has the correct cardinality by Lemma 3.5.3 of \cite{Fu} and each element of $S_m$ maps to $\OF_K^\times$. Using \eqref{e:Delta0OK}, it is easy to see any element of $\OF_K^\times$ can upon multiplying by a suitable element of $\OF^\times(1+\p^m \OF_K)$ be brought to one of the elements in the image of $S_m$. The proof is complete.

{\bf \underline{Ramified case:}} Assume $K/F$ is a ramified extension. Note that the image of $P_0$ under \eqref{isoeqinert} is an uniformizer $\varpi_K$ of $\OF_K$. So $D$ maps onto $\{1, \varpi_K\} \simeq F^\times \OF_K^\times\bs K^\times$. The proof that the image of $S_m$ under \eqref{isoeqinert} gives a complete set of representatives for $\OF^\times(1+\p^m \OF_K)\bs \OF_K^\times$ is essentially identical to the inert case.

{\bf \underline{Split case:}} Assume $K=F \times F$. A computation shows that the image of $P_0$ under \eqref{isoeqsplit} is the element  $(\varpi, 1)$. Therefore $D$ maps onto the set $\{(\varpi^n, 1): n\in \Z\}$ which is clearly a complete set of representatives for  $F^\times (\OF^\times \times \OF^\times) \bs (F^\times \times F^\times).$ Similarly, the image of $S_m$ under \eqref{isoeqsplit} is the
set $\{(1 + \sqrt{d}y, 1): y \in \OF/\p^m, y \notin \frac{-1}{\sqrt{d}} +\p \}$ which clearly
 gives a complete set of representatives for $\OF^\times \left((1+\p^m \OF) \times (1+\p^m \OF) \right)\bs (\OF^\times \times \OF^\times)$.
\end{proof}
\subsubsection{Some volume computations} We now calculate some volumes
that will appear in our calculation of the local integral.
\begin{lemma}\label{TrivialDeterminant}
    Let $X,S$ be two $2 \times 2$ matrices.
    Then
    $$\det(X+S)=\det(X)+\trace(^t{A_S}X)+\det(S),$$
    where $A_S$ is the adjugate matrix of $S$.
    In particular, if $S$ is invertible, then
    $$\det(X+S)=\det(X)+(\det S)\trace(S^{-1}X)+\det(S).$$
\end{lemma}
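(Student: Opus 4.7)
The plan is to give a direct verification: this is a two-by-two identity with no subtlety beyond bookkeeping, so the cleanest route is to expand and match coefficients.

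First I would invoke multilinearity of the determinant in rows (or columns) to decompose $\det(X+S)$, viewed as a polynomial in the joint entries of $X$ and $S$, into pieces homogeneous of each bidegree. The pure degree-2-in-$X$ piece is $\det(X)$ and the pure degree-2-in-$S$ piece is $\det(S)$, so the only content of the lemma is the identification of the mixed bilinear term.

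Next I would carry out the expansion explicitly. Writing $X=(x_{ij})$ and $S=(s_{ij})$,
\[
\det(X+S)=(x_{11}+s_{11})(x_{22}+s_{22})-(x_{12}+s_{12})(x_{21}+s_{21}),
\]
and collecting the cross monomials yields
\[
x_{11}s_{22}+s_{11}x_{22}-x_{12}s_{21}-s_{12}x_{21}.
\]
I then verify by inspection that this equals $\trace(\,{}^t\!A_S\,X)$ for $A_S$ the adjugate of $S$: indeed, for any $2\times 2$ matrix $M=(m_{ij})$ one has $\trace(MX)=m_{11}x_{11}+m_{12}x_{21}+m_{21}x_{12}+m_{22}x_{22}$, so matching coefficients forces $M={}^t\!A_S$ in the convention being used. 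No step is genuinely hard; the only thing to watch is the transpose/cofactor convention, which is fixed by this matching.

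Finally, for the ``in particular'' clause, when $S$ is invertible the defining relation $S\,A_S=(\det S)\,I$ (or equivalently its transpose) gives ${}^t\!A_S=(\det S)\,S^{-1}$, whence
\[
\trace\bigl(\,{}^t\!A_S\,X\bigr)=(\det S)\,\trace(S^{-1}X),
\]
and the second formula follows from the first. I do not anticipate any genuine obstacle: the whole statement is a one-line calculation, and the only ``choice'' in the write-up is whether to present the direct expansion above or, slightly more invariantly, to note that $t\mapsto \det(tX+S)$ is a quadratic polynomial whose linear coefficient is the directional derivative of $\det$ at $S$ in the direction $X$, i.e.\ $\trace(\,{}^t\!A_S\,X)$ by the standard Jacobi formula, and evaluate at $t=1$.
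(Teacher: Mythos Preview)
Your proof is correct and follows essentially the same route as the paper: a direct entrywise expansion of $\det(X+S)$, identification of the bilinear cross term as a trace against the (transpose of the) adjugate, and then the adjugate identity ${}^tA_S=(\det S)S^{-1}$ for the invertible case. Your closing remark about the Jacobi-formula alternative is a pleasant aside but not a genuinely different argument.
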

\begin{proof}
    Say $X=\mat{x}{y}{w}{z}$ and $S=\mat{a}{b}{c}{d}$.
    Then
    \begin{align*}
        \det(X+S)&=(x+a)(z+d)-(y+b)(w+c)\\
        &= (ad-bc)+(az+xd-yc-wb)+(xz-yw).
    \end{align*}
    Now
    \begin{align*}
        az+xd-yc-wb&=\trace\mat{xd-wb}{*}{ *}{az-yc}\\
        &= \trace\left(\mat{d}{-b}{-c}{a}\mat{x}{y}{w}{z}\right),
    \end{align*}
    which proves the first claim.
    The second claim comes from the identity
    $$^t{A_S}=(\det S)S^{-1}.$$
\end{proof}
\begin{lemma}\label{Volumes}
Let $a,b,c,d$ be integers.
Let $$\Sc_{a,b,c,d}=\{\mat{x}{y}{y}{z} \in M^{\text{sym}}_2(F): v(x) \ge a, v(y) \ge b, v(z) \ge c, v(xz-y^2) \ge d\}.$$
Then we have
$$\vol(\Sc_{a,b,c,d}) \le q^{-\max\{d/2,b\}-\max\{d,a+c\}}+2\max\{0,d-a-c\}q^{-d-b}.$$
\end{lemma}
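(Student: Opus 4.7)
The plan is to decompose the integration domain according to the valuation of the off-diagonal entry $y$, since the quadratic constraint $v(xz-y^2)\ge d$ behaves very differently depending on whether $v(y^2) = 2v(y)$ is at least $d$ or strictly less. Writing
$$\vol(\Sc_{a,b,c,d}) = \int_{v(y)\ge b} V(y)\,dy, \qquad V(y) = \vol\{(x,z):v(x)\ge a,\,v(z)\ge c,\,v(xz-y^2)\ge d\},$$
I will split the $y$-integral at $v(y)=\lceil d/2\rceil$ into Case A ($v(y)\ge\lceil d/2\rceil$) and Case B ($b\le v(y)<\lceil d/2\rceil$).

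In Case A one has $v(y^2)\ge d$, so the constraint on $(x,z)$ reduces to $v(x)\ge a$, $v(z)\ge c$, $v(xz)\ge d$. The plan is to stratify this $(x,z)$-integral by $j=v(x)\ge a$: when $j\ge d-c$ the $v(xz)\ge d$ condition is automatic and one gets total contribution $q^{-\max\{a+c,d\}}$, while when $a\le j<d-c$ (which only occurs if $a+c<d$) the condition forces $v(z)\ge d-j$, giving an extra $(d-a-c)(1-q^{-1})q^{-d}$. Thus $V(y)\le q^{-\max\{d,a+c\}}\bigl(1+\max\{0,d-a-c\}\bigr)$ in Case A, and multiplying by the $y$-volume $q^{-\lceil d/2\rceil}\le q^{-d/2}$ (combined with $\int_{v(y)\ge b}$ yielding the factor $q^{-\max\{b,d/2\}}$) gives a Case A contribution of at most
$$q^{-\max\{b,d/2\}-\max\{d,a+c\}} + \max\{0,d-a-c\}\,q^{-b-d},$$
where the second summand comes from using $q^{-\max\{b,d/2\}}\le q^{-b}$ on the ``extra'' piece.

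In Case B one has $v(y^2)=2k<d$ with $k=v(y)$, so the constraint $v(xz-y^2)\ge d$ forces $v(xz)=2k$ exactly; writing $j=v(x)$, this means $v(z)=2k-j$ and requires $a\le j\le 2k-c$. For each admissible $(k,j)$, after fixing $x$ with $v(x)=j$, the set of valid $z$ is the coset $y^2/x+\varpi^d x^{-1}\OF$ of measure $q^{j-d}$ (the side condition $v(z)\ge c$ is automatic since $2k-j\ge c$). I will then integrate out $x$ and $y$ at fixed valuations, so each pair $(k,j)$ contributes $(1-q^{-1})^2 q^{-k-d}$; summing over the at most $(2k-a-c+1)\le(d-a-c)$ admissible $j$'s, and then summing the geometric series $\sum_{k\ge b}q^{-k}\le q^{-b}/(1-q^{-1})$, gives a Case B contribution bounded by $(d-a-c)\,q^{-b-d}$. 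Adding Cases A and B yields exactly the claimed bound.

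The main obstacle, and the subtle point of the bookkeeping, is the sub-case $a+c<d$: Case A alone produces an unwanted factor $(d-a-c+1)$ which one must carefully split as $1+(d-a-c)$ and absorb the second piece into the $q^{-b-d}$ term, matching it with a parallel $(d-a-c)q^{-b-d}$ contribution coming from Case B. (When $a+c\ge d$, Case B is empty and Case A directly gives the clean term $q^{-\max\{b,d/2\}-(a+c)}$, so there is nothing to do.) Beyond this, the argument is a routine stratification of integrals over a non-archimedean local field by valuations.
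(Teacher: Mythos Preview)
Your proof is correct and follows essentially the same approach as the paper. Both arguments split $\Sc_{a,b,c,d}$ into the region where $2v(y)\ge d$ (your Case~A, the paper's $S_1$) and the complementary region where cancellation between $xz$ and $y^2$ is forced (your Case~B, the paper's $S_2$); these two decompositions coincide as subsets of $\Sc_{a,b,c,d}$, and the stratification by $v(x)$ and the subsequent bookkeeping in each piece are the same.
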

\begin{proof}
We have $\Sc_{a,b,c,d} \subset S_1 \cup S_2$ where
$$S_1 = \{\mat{x}{y}{y}{z} \in M^{\text{sym}}_2(F): v(x) \ge a, v(y) \ge b, v(z) \ge c, \min\{v(xz), 2v(y)\} \ge d\}$$
and
$$S_2=\{\mat{x}{y}{y}{z} \in M^{\text{sym}}_2(F): v(x) \ge a, v(y) \ge b, v(z) \ge c, v(xz) < d, xz \in y^2 + \p^d\}.$$

    {\bf \underline{Volume of $S_1$:}} Let $j=\max\{b,\lceil \frac{d}{2} \rceil\}$, so
that if $\mat{x}{y}{y}{z} \in S_1$ then $y \in \p^j$.
If $a+c \ge d$ then we have
    $$\vol(S_1)= \vol(\p^a \times \p^j \times \p^c) \le q^{-\max\{d/2,b\}-(a+c)}.$$
On the other hand, if $a+c < d$ then let $i=v(x)$.
Since $v(xz) \ge d$, we must have $v(z) \ge d-i$, which becomes automatic
from the condition $v(z) \ge c$ as soon as $i \ge d-c$. Thus we have
\begin{align*}
    \vol(S_1)&=\vol(\p^j)\left(\sum_{i=a}^{d-c-1} \vol(\varpi^i \OF^\times \times \p^{d-i}) + \vol(\p^{d-c} \times \p^c) \right)\\
    &=q^{-j} ((d-a-c)(1-q^{-1})q^{-d} + q^{-d}) \le (1+d-a-c)q^{-\max\{d/2,b\}-d}.
\end{align*}
Combining the cases $a+c \ge d$ and $a+c<d$ together, we find that
$$ \vol(S_1) \le (1+\max\{0,d-a-c\})q^{-\max\{d/2,b\}-\max\{d,a+c\}}.$$

{\bf \underline{Volume of $S_2$:}}
Assume $\mat{x}{y}{y}{z} \in S_2$ and let $i=v(x)$ and $j=v(y)$.
Since $v(xz)<d$, the condition $xz \in y^2 +\p^d$ forces $2j=i+v(z)<d$
and in particular $2j-i \ge c$.
Moreover, for fixed $x$, we must have $z \in \frac{y^2}{x} + \p^{d-i} $.
Therefore,
\begin{align*}
    \vol(S_2)&= \sum_{j \ge b} \vol(\varpi^j \OF^\times) \sum_{i \ge a}
    \vol(\varpi^i \OF^\times \times \p^{d-i}) \1_{\substack{2j-i \ge c\\ 2j <d}}\\
    &= \sum_{j=b}^{\lfloor\frac{d-1}2\rfloor}(1-q^{-1})q^{-j}\sum_{i=a}^{2j-c}(1-q^{-1})q^{-d}\\
    & \le \max\{0,d-c-a\}q^{-b-d}.
\end{align*}
Therefore, in total we have
\begin{align*}
    \vol(\Sc_{a,b,c,d}) &\le q^{-\max\{d/2,b\}-\max\{d,a+c\}} + \max\{0,d-c-a\}(q^{-\max\{d/2,b\}-d}+q^{-b-d})\\
    &\le  q^{-\max\{d/2,b\}-\max\{d,a+c\}} + 2\max\{0,d-c-a\}q^{-b-d}.
\end{align*}
\end{proof}

\begin{definition}\label{CartanCoordinates}
Fix $j$ a non-negative integer.
For $Y=\mat{x}{y}{y}{z} \in M_2^{\text{sym}}(F)$, define
    $$M_1(Y;j)=\min\{0,v(x)+j,v(y),v(z)\}$$ and
    $$M_2(Y;j)=\min\{0,j+v(xz-y^2), j+v(x),j+v(y),v(z)\}.$$
\end{definition}
For future use, we note the following inequality, holding for all
$Y \in M_2^{\text{sym}}(F)$
\begin{equation}\label{InequalityMM}
2M_1(Y;j) \le M_2(Y,j) \le M_1(Y;j)+j.
\end{equation}
The second inequality comes from
$$M_2(Y,j) \le \min\{0,j+v(x),j+v(y),v(z)\}
\le  \min\{0,j+v(x),v(y),v(z)\}+j.$$
To establish the first inequality, observe that
\begin{align*}
    v(xz-y^2) &\ge \min\{v(x)+v(z),2v(y)\}
\end{align*}
and thus
$$M_2(Y,j) \ge \min\{0, v(x)+v(z)+j,2v(y),j+v(x),j+v(y),v(z)\} \ge 2M_1(Y;j).$$

\begin{lemma}\label{PainfulVolumes}
For any integers $m_1,m_2 \le 0$ with $2m_1\le m_2$ define
$$N(m_1,m_2;j)=\{Y \in M_2^{\text{sym}}(F): M_1(Y;j)=m_1, M_2(Y;j)=m_2\}.$$
Then we have
$$\vol\left( N(m_1,m_2;j)\right) \le (1+2\min\{m_2-2m_1,-m_2\})q^{-m_1-m_2+j}.$$
\end{lemma}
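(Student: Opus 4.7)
The plan is to realise $N(m_1,m_2;j)$ as a subset of one of the sets $\Sc_{a,b,c,d}$ studied in Lemma~\ref{Volumes}, and then read off the claimed bound from that lemma. First I would unpack the definitions of $M_1$ and $M_2$: assuming $N(m_1,m_2;j)$ is non-empty, the inequality \eqref{InequalityMM} forces $m_2\le m_1+j$, and for any $Y=\mat{x}{y}{y}{z}\in N(m_1,m_2;j)$ one obtains $v(x)\ge \max\{m_1,m_2\}-j$, $v(y)\ge m_1$ (because $m_2-j\le m_1$ makes the $M_2$-bound $v(y)\ge m_2-j$ redundant), $v(z)\ge \max\{m_1,m_2\}$, and $v(xz-y^2)\ge m_2-j$. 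This exhibits $N(m_1,m_2;j)\subseteq \Sc_{a,b,c,d}$ with $a=\max\{m_1,m_2\}-j$, $b=m_1$, $c=\max\{m_1,m_2\}$, and $d=m_2-j$.

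Next I would feed these parameters into Lemma~\ref{Volumes}. The key identity is
\[
d-a-c \;=\; m_2-2\max\{m_1,m_2\} \;=\; \min\{m_2-2m_1,\,-m_2\},
\]
which is non-negative by the hypothesis $2m_1\le m_2$, and which is moreover exactly the combinatorial factor appearing in the claim. In particular $\max\{d,a+c\}=d=m_2-j$, and the second term in the estimate of Lemma~\ref{Volumes} contributes precisely $2\min\{m_2-2m_1,-m_2\}\,q^{-(m_2-j)-m_1}=2\min\{m_2-2m_1,-m_2\}\,q^{-m_1-m_2+j}$, matching the main combinatorial factor of the bound.

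The remaining task is to check that the first term $q^{-\max\{d/2,b\}-\max\{d,a+c\}}$ is bounded by $q^{-m_1-m_2+j}$. I would split on whether $m_1\ge (m_2-j)/2$ or not: in the former the maximum equals $b=m_1$ and the first term is exactly $q^{-m_1-m_2+j}$; in the latter it becomes $q^{-3(m_2-j)/2}$, and the desired inequality $-3(m_2-j)/2 \le -m_1-m_2+j$ rearranges to $2m_1\le m_2-j$, which is the defining condition of this regime. Summing the two contributions then gives the claimed estimate. I do not foresee a serious technical obstacle; the one conceptual point worth highlighting is the clean unification of the cases $m_1\le m_2$ and $m_1>m_2$ through the single identity $d-a-c=\min\{m_2-2m_1,-m_2\}$, which is what makes the final bound possible as a uniform statement rather than a two-case assertion.
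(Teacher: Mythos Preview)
Your proof is correct and follows essentially the same route as the paper: embed $N(m_1,m_2;j)$ into $\Sc_{a,b,c,d}$ with $a=\max\{m_1,m_2\}-j$, $c=\max\{m_1,m_2\}$, $d=m_2-j$, and then invoke Lemma~\ref{Volumes}. The one minor difference is that the paper bounds the first term in a single stroke by observing $\max\{d/2,b\}\ge m_1$ (immediate since $b\ge m_1$), whereas you arrive at the same conclusion via a case split on whether $m_1\ge (m_2-j)/2$; your explicit identification $d-a-c=\min\{m_2-2m_1,-m_2\}$, however, makes the appearance of the combinatorial factor more transparent than in the paper's presentation.
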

\begin{proof}
    If $Y \in N(m_1,m_2;j) $ then by definition of $M_1(Y;j)$ we
    must have $v(x) \ge m_1-j$, and by definition of $M_2(Y;j)$
    we must also have $v(x) \ge m_2-j$.
    Thus $v(x) \ge \max \{m_1,m_2\}-j$.
    By the same reasoning, it follows that
    $$Y \in \Sc_{a,b,c,d},$$
    where
    $$a=\max\{m_1,m_2\}-j, b=\max\{m_1,m_2-j\}, c=\max\{m_1,m_2\}, d=m_2-j.$$
    We have
    $$\max\{d/2,b\}=\max\left\{m_1,m_2-j,\frac{m_2-j}2\right\}
    =\max\left\{m_1,\frac{m_2-j}2\right\} \ge m_1.$$
    Moreover, using the fact that $m_2 \le 0$, we have by~(\ref{InequalityMM})
    \begin{align*}
        \max\{d,a+c\}&=\max\{m_2-j,2\max\{m_1,m_2\}-j\}\\
    &=\max\left\{m_2,2m_1\right\}-j=m_2-j.
    \end{align*}
    Thus, by Lemma~\ref{Volumes},
    we obtain
    $$\vol\left( N(m_1,m_2;j)\right) \le (1+2\min\{m_2-2m_1,-m_2\})q^{-m_1-m_2+j}.$$
\end{proof}

\subsubsection{Proof of Proposition \ref{p:localboundJp}} Recall that $\pi$ is an irreducible, tempered, spherical representation,  $\phi \in V_\pi$ is a spherical vector, and $\Lambda$ is a character of $K^\times$ satisfying $\Lambda|_{F^\times}=1$.

Our aim is to prove  Proposition \ref{p:localboundJp}. We assume that $c(\Lambda) \ge 1$ because the case $c(\Lambda) = 0$ is known by Theorem 2.1 of \cite{DPSS15}.
Our bound will follow from the calculation of explicit coset representatives
in Lemma~\ref{representatives} together with a bound for the following stable integral.
Given a positive integer $m$ and given $t \in T(F)$, we define
$$J_0(\phi^{(0,m)},t)= \int\limits_{N(F)}^{\st}\Phi_{\phi^{(0,m)}}(nt)\theta^{-1}(n)\, dn.$$

\begin{proposition}\label{p:GeneralBound}
Let $t \in T(F)$ and let $m$ be a positive integer.
Write the Cartan decomposition
\begin{equation}\label{CartandDecomposition}
   \mat{\varpi^{-2m}}{}{}{\varpi^{-m}} t \mat{\varpi^{2m}}{}{}{\varpi^{m}}=UDV
\end{equation}
with
$U,V \in \GL_2(\OF)$, $D=\mat{\varpi^i}{}{}{\varpi^{i+j}}$ and $j \ge 0$.
Then we have
$$J_0(\phi^{(0,m)},t)\ll (m+v(2)+j)^5 q^{-3m-\frac{j}2}.$$
\end{proposition}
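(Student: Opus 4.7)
My plan is to first unfold the matrix coefficient by writing $\Phi_{\phi^{(0,m)}}(nt)=\Phi_\phi(h^{-1}nth)$ with $h:=h(0,m)$, and substituting $n_1:=h^{-1}nh$. Since $h$ normalises $N(F)$, acting on the symmetric upper-right block with Jacobian $q^{3m}$, this substitution produces the overall prefactor $q^{-3m}$ and deforms the character $\theta^{-1}$ into $\psi(-\trace(S_m\cdot))$ with $S_m=\mat{a\varpi^{2m}}{b\varpi^m/2}{b\varpi^m/2}{c}$. Next I would lift the hypothesised Cartan decomposition $UDV$ of the upper-left $\GL_2$-block of $\tilde t:=h^{-1}th$ to a factorisation $\tilde t=K_1 h^*K_2$ inside $\GSp_4(F)$, with $K_1=\mat{U}{0}{0}{({}^tU)^{-1}}$, $K_2=\mat{V}{0}{0}{({}^tV)^{-1}}\in\Sp_4(\OF)$ and
$$h^*=\mathrm{diag}(\varpi^i,\varpi^{i+j},\eta\varpi^{i+j},\eta\varpi^i)\in\GSp_4(F),\qquad \eta\in\OF^\times.$$
Using bi-$\GSp_4(\OF)$-invariance of $\Phi_\phi$ and the unit-Jacobian substitution $X_2=U^{-1}X_1({}^tU)^{-1}$, the claim reduces to
$$\Bigl|\int^{\st}\Phi_\phi\bigl(n_2\,h^*\bigr)\,\psi(-\trace(S'X_2))\,dX_2\Bigr|\ll (m+v(2)+j)^5\,q^{-j/2},$$
where $n_2=\mat{I}{X_2}{0}{I}$ and $S':={}^tU\,S_m\,U$.

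The second step is to identify the Cartan invariants of $g_2:=n_2h^*$ in terms of the combinatorial quantities $M_1(X_2;j)$ and $M_2(X_2;j)$ of Definition~\ref{CartanCoordinates}. After factoring out the central element $\varpi^i$ (which does not affect $\Phi_\phi$ as $\pi$ has trivial central character), a direct inspection of the 1-minors and 2-minors of the resulting matrix --- with the determinantal identity of Lemma~\ref{TrivialDeterminant} handling the 2-minor that produces $\det(X_2 + \text{lower order})$ --- shows that their GCDs are $\varpi^{M_1(X_2;j)}$ and $\varpi^{M_2(X_2;j)}$ respectively. Together with the similitude $\eta\varpi^j$, this pins down the dominant form of $g_2$ (possibly after a Weyl reflection when $M_2-M_1>j/2$), and the standard Harish--Chandra bound $|\Phi_\phi(g)|\le\Xi(g)\ll\delta_B^{-1/2}(g_{\rm dom})\cdot(\text{polynomial in Weyl length})$ for tempered spherical representations then yields an estimate of the form
$$|\Phi_\phi(n_2 h^*)|\ll q^{M_1(X_2;j)+M_2(X_2;j)-3j/2}\,P(M_1,M_2,j)$$
with $P$ a polynomial of bounded degree.

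The final step is to stratify the integration domain by the pair $(m_1,m_2)=(M_1(X_2;j),M_2(X_2;j))$, bound each stratum by $(1+2\min(m_2-2m_1,-m_2))\,q^{-m_1-m_2+j}$ via Lemma~\ref{PainfulVolumes}, and combine with the matrix-coefficient bound to obtain a per-stratum contribution of order $q^{-j/2}$ times a polynomial in $(m_1,m_2,j)$. Since the $q$-exponent is \emph{independent} of $(m_1,m_2)$, the naive sum over all strata diverges and convergence must be recovered through the stable integral. For $X_2$ in sufficiently deep concentric annuli, the oscillation of $\psi(-\trace(S'X_2))$ forces the annulus-wise contribution to vanish, effectively truncating the sum to $O(m+v(2)+j)$ many non-trivial strata; the $v(2)$ dependence enters through the $2$-adic conductor of the additive character $\psi\circ\trace(S'\cdot)$, whose off-diagonal entries are of the shape $b\varpi^m/2$. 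Summing over the truncated range gives the desired bound. The main obstacle is precisely this stability step: the pointwise bound on $|\Phi_\phi|$ is on its own insufficient to control the integral, and it is only through careful exploitation of the oscillation of the additive character --- together with an accurate tracking of how the truncation depth depends on $m$, $v(2)$ and $j$ --- that the polynomial-in-$(m+v(2)+j)$ saving emerges.
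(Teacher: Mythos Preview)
Your plan matches the paper's proof in all its main steps: the conjugation by $h(0,m)$ producing the Jacobian factor $q^{-3m}$ and the twisted character $\psi(-\trace(S'\cdot))$; the identification of the double coset of $n_2 h^*$ via the invariants $M_1, M_2$ (the paper quotes \cite[Lemma~2.7]{DPSS15} for this rather than inspecting minors directly, but it amounts to the same thing); the Macdonald bound on $\Phi_\phi(h(\ell',m'))$; the stratification by $(m_1,m_2)$ with the volume bound of Lemma~\ref{PainfulVolumes}; and the recognition that each stratum contributes $\asymp q^{-j/2}$ independently of $(m_1,m_2)$, so that finiteness must come from the stable integral.

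The one genuine gap is in the truncation argument itself, which is more delicate than ``oscillation on concentric annuli''. Scaling $Y \mapsto \lambda Y$ with $\lambda \in 1+\p$ does preserve the stratum $N(m_1,m_2;j)$, but it only yields cancellation on the subset where $v(\trace(S'Y)) < -2$; there remains a non-empty piece of the stratum on which $\trace(S'Y)$ is nearly integral and $\psi$ does not oscillate under scaling. The paper kills this remaining piece by the \emph{translation} $Y \mapsto Y + \varpi^{-1-v(2)}S'^{-1}$: using $v(\det S') \le 2m+1$ (from the standard assumptions~\eqref{standardassumptions}) together with Lemma~\ref{TrivialDeterminant}, one checks that once $m_1 \le -2m-6-2j-2v(2)$ this shift preserves both $M_1$ and $M_2$ as well as the region $v(\trace(S'Y)) \ge -2$, while multiplying the integrand by the non-trivial constant $\psi(2\varpi^{-1-v(2)})$. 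The $v(2)$ in the final bound therefore enters through this factor $2 = \trace(S'\cdot S'^{-1})$ in the translation step, not through the off-diagonal entries $b\varpi^m/2$ of $S_m$ as you suggest.
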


\begin{proof}
Since $\Phi_{\phi^{(0,m)}}$ is bi-$F^\times h(0,m)\GSp_4(\OF)h(0,m)^{-1}$-invariant,
we have
\begin{equation}\label{SeriesJ0}
    J_0(\phi^{(0,m)},t)=\lim_{k\to\infty}
\sum_{\ell',m'} \Phi_{\phi^{(0,m)}} (h(\ell',m'))
\int_{N(\p^{-k}) \cap N(\ell',m';m,t)} \theta^{-1}(n)\, dn,
\end{equation}
where
$$N(\ell',m';m,t)=\{n \in N(F) : h(0,m)^{-1}nt h(0,m)\in Z(F)\GSp_4(\OF)h(\ell',m')\GSp_4(\OF)\}.$$
Now let $n=\mat{1}{X}{}{1} \in N(F)$.
For convenience, define
$$h(m)=\mat{\varpi^{2m}}{}{}{\varpi^m},$$
so that
$$h(0,m)=\mat{h(m)}{}{}{\varpi^{2m}h(m)^{-1}}.$$
Then
\begin{align*}
    &h(0,m)^{-1}nth(0,m)\\
    &=
    h(0,m)^{-1}
    \mat{1}{X}{}{1}
    \mat{t}{}{}{(\det t)^t{t}^{-1}}
    h(0,m)\\
    &=
    \mat{h(m)^{-1}}{}{}{\varpi^{-2m}h(m)}
    \mat{1}{X}{}{1}
    \mat{h(m)UDVh(m)^{-1}}{}{}{(\det t){h(m)^{-1}} ^t{U}^{-1}{D^{-1}}^t{V}^{-1}h(m)}
    \mat{h(m)}{}{}{\varpi^{2m}h(m)^{-1}}\\
    & =\mat{U}{}{}{^tU^{-1}}\mat{D}{(\det t) Y D^{-1}}{}{(\det t) D^{-1}}\mat{V}{}{}{^t{V}^{-1}},
\end{align*}
where we have set
$U^{-1} \mat{\varpi^{-2m}}{}{}{\varpi^{-m}} X \mat{1}{}{}{\varpi^{m}} ^tU^{-1}=Y=\mat{\tilde{x}}{\tilde{y}}{\tilde{y}}{\tilde{z}}$.
Note that there is a unit $u$ such that $\det t= \varpi^{2i+j} u$, and
\begin{align*}
    \mat{D}{(\det t) Y D^{-1}}{}{(\det t) D^{-1}}
    &=\mat{1}{Y}{}{1}\mat{D}{}{}{(\det t) D^{-1}}\\
&=\varpi^i \mat{1}{Y}{}{1}
\begin{bmatrix}1\\&\varpi^{j}\\&&\varpi^j\\&&&1\end{bmatrix}
\begin{bmatrix}1\\&1\\&&u\\&&&u\end{bmatrix}.
\end{align*}
Thus, recalling Definition~\ref{CartanCoordinates}, by~\cite[Lemma 2.7]{DPSS15} we have $n \in N(\ell',m';m,t)$ if and only if
$$\ell'=j+2M_1(Y;j)-2M_2(Y;j)$$
and
$$m'=M_2(Y;j)-2M_1(Y;j).$$
Henceforth, set $m_1=-m'-\frac{\ell'}2+\frac{j}2$ and
$m_2=j-m'-\ell'$, so that $n \in N(\ell',m';m,t)$ if and only if $Y \in N(m_1,m_2;j)$.
Observe that
\begin{align*}
    \trace(SX)&=\trace(S\mat{\varpi^{2m}}{}{}{\varpi^m}UY^t{U}\mat{1}{}{}{\varpi^{-m}})\\
    &=\trace(S'Y),
\end{align*}
where
$$S'=^t{U}\mat{1}{}{}{\varpi^{-m}}S\mat{\varpi^{2m}}{}{}{\varpi^m}U=
^t{U}\mat{\varpi^m}{}{}{1}S\mat{\varpi^{m}}{}{}{1}U.$$
Thus, changing variables $X \mapsto Y$, we have
\begin{equation}\label{SliceTheIntegral}
     \int_{N(\p^{-k}) \cap N(\ell',m';m,t)} \theta_{S}^{-1}(n)\, dn=q^{-3m}I^{(k)}(m_1,m_2;j),
\end{equation}
where
$$I^{(k)}(m_1,m_2;j)=\int_{N(m_1,m_2;j) \cap S_U(k,m)}
\psi(-\trace(S'Y )) \, dY$$
and
$$S_U(k,m)= U^{-1} \mat{\varpi^{-2m}}{}{}{\varpi^{-m}} M_2^{\text{sym}}(\mathfrak p^{-k}) \mat{1}{}{}{\varpi^{m}} ^t{U}^{-1}.$$
Since
$$ \left| I^{(k)}(m_1,m_2;j) \right| \le \vol(N(m_1,m_2;j)),$$
by Lemma~\ref{PainfulVolumes} we obtain
\begin{equation}\label{VolumeBound}
   \left| \int_{N(\p^{-k}) \cap N(\ell',m';m,t)} \theta_{S}^{-1}(n)\, dn \right|  \le
    (1+2\min\{m',\ell'+m'-j\})q^{\frac32 \ell'+2m'-\frac{j}2-3m}.
\end{equation}
By Macdonald's formula~\cite[Proposition 2.10]{DPSS15} we have for all non-negative integers $\ell',m'$
\begin{equation}\label{McDonaldBound}
   \Phi_{\phi^{(0,m)}}(h(\ell',m')) =  \Phi_{\phi}(h(\ell',m')) \ll (m'+\ell')^2 q^{-(4m'+3\ell')/2}.
\end{equation}
The bound~(\ref{McDonaldBound}) follows from the fact that the
Macdonald's formula is essentially a double geometric sum in the Satake parameters, of length $\ll m'+\ell'$.
Hence combining~(\ref{McDonaldBound}) and~(\ref{VolumeBound}),
every term in~(\ref{SeriesJ0}) is $\ll (m'+\ell')^3 q^{-\frac{j}2-3m}$.
It remains to show that the series~(\ref{SeriesJ0}) is actually a finite sum.

From now on, assume that $m_1+j \le -2m-6-j-2v(2)$.
We claim that for $k$ large enough we have
$I^{(k)}(m_1,m_2;j)=0$. This is enough to prove the Proposition, because then only the terms with $0 \ge m_1>-2m-6-2j-2v(2)$
will contribute to~(\ref{SeriesJ0}), but in view of inequality~(\ref{InequalityMM}) these terms
must also satisfy $0 \ge m_2 \ge -4m-12-4j-4v(2)$, and thus there are $\ll (m+v(2)+j)^2$ such terms.

We split the integral $I^{(k)}(m_1,m_2;j)$ over the following two disjoint ranges:
\begin{enumerate}
    \item $v(\trace(S'Y)) <-2$,
    \item $v(\trace(S'Y)) \ge -2$.
\end{enumerate}
For $i \in \{1,2\}$, let $I_i$ be the integral in the corresponding range.
Range~(1) is trivially stable by the change of variable $Y \mapsto \lambda Y$ for any $\lambda \in \OF^\times$.
Thus
$$I_1 = \int \psi(-\lambda\trace(S'Y)) \, dY.$$
Integrating both sides with respect to $\lambda \in 1+\p$ gives $I_1=0$.

Next, we claim that range~(2) is stable by the change of variables $Y \mapsto Y+\varpi^{-1-v(2)}S'^{-1}$.
To prove this, we have three conditions to check:
\begin{enumerate}
\item We have $$v[\trace(S'(Y+\varpi^{-1-v(2)}S'^{-1}))]=v(\trace(S'Y)+2\varpi^{-1-v(2)}) \ge -2.$$
\item We need to check that $M_1(Y+\varpi^{-1-v(2)}S'^{-1};j)=M_1(Y;j)=m_1$.
We have $\det(S') \in \varpi^{2m} \OF^\times \det(S)=\frac{\varpi^{2m}d}4 \OF^\times$ and by Assumption~\ref{standardassumptions},
$$v(\det(S')) \le 2m+1.$$
Moreover, since the entries of $S$ are integers, it is clear from
the definition that $S'$ also has integer coefficients.
Since $$S'^{-1}={\frac1{\det S'}}{}^t{A_{S'}},$$
where $A_{S'}$ is the adjugate matrix of $S'$, it follows that
all the entries of $\varpi^{-1-v(2)}S'^{-1}$ have valuation larger than $-2m-2-v(2)$.
Since we are assuming $M_1(Y;j)=m_1 \le -2m-6-2j-2v(2)< -2m-2-v(2)-j$, it follows
that $M_1(Y+\varpi^{-1-v(2)}S'^{-1};j)=M_1(Y;j)$, as required.
\item We need to check that $M_2(Y+\varpi^{-1-v(2)}S'^{-1};j)=M_2(Y;j)=m_2$.
By Lemma~\ref{TrivialDeterminant} we have
$$\det(Y+\varpi^{-1-v(2)}S'^{-1})=
\det(Y)+\varpi^{-1-v(2)}\det(S')^{-1}\trace(S'Y)+\varpi^{-2-2v(2)}\det(S')^{-1}.$$
In particular,
$$v(\det(Y+\varpi^{-1-v(2)}S'^{-1}))=v(\det(Y))$$
unless
$v(\det(Y)) \ge -2m-5-2v(2)$,
in which case it does not contribute to $M_2(Y;j)$ anyway since
by~(\ref{InequalityMM}), we have $M_2(Y;j)=m_2 \le m_1+j < -2m-5-j-2v(2)$.
In both cases, we have $M_2(Y+\varpi^{-1-v(2)}S'^{-1};j)=M_2(Y;j)$.
\end{enumerate}
Thus, we obtain $$I_2=\psi(2\varpi^{-1-v(2)})I_2.$$
Since $2\varpi^{-1-v(2)}$ generates $\p^{-1}/\OF$ and since $\psi$ is trivial on $\OF$ but not trivial on $\p^{-1}$, we have $\psi(2\varpi^{-1-v(2)}) \neq 1$. This implies $I_2=0$.
\end{proof}

We now proceed to prove Proposition~\ref{p:localboundJp} in all cases -- split, inert and ramified. Until the end of the section, we set $m=c(\Lambda)$.
We remind the reader that Assumptions~\ref{standardassumptions} are in force throughout.
By $U_T(m)$-invariance, we have
\begin{equation}\label{J0Lambda}
    J_{\Lambda,\theta}(\phi^{(0,m)})=\vol(\OF^\times \bs U_T(m))\sum_i\Lambda^{-1}(t_i)J_0(\phi^{(0,m)},t_i),
\end{equation}
where the $i$-sum ranges over representatives $t_i$ of $F^\times U_T(m) \bs T(F)$.
Note that $\vol(\OF^\times \bs U_T(m)) \asymp q^{-m}$.

 {\bf \underline{Proof in the inert case:}}
Assume $K/F$ is an unramified field extension.
Let $t$ be a representative as in Lemma~\ref{representatives}.
Consider first the case $$t=\mat{1}{yc}{-ya}{1-yb}.$$
Write $$\mat{\varpi^{-2m}}{}{}{\varpi^{-m}} t \mat{\varpi^{2m}}{}{}{\varpi^{m}}
=\mat{1}{\varpi^{-m}yc}{-\varpi^{m}ya}{1-yb}=UDV$$
as in~(\ref{CartandDecomposition}).
Noting that $v(y)-m \le 0$, we have
\begin{align*}
    i&=\min\{v(1), v(y)-m, v(ya)+m, v\left(1-yb\right)\}\\
    &=v(y)-m.
\end{align*}
On the other hand, since $t \in S_m$ and the image of $S_m$ under~(\ref{isoeqinert})
is contained in $\OF_K^\times$, we have
$$2i+j=v(\det(t))=v(\Norm(1+y\delta_0))=0.$$
Therefore $$j=2m-2v(y).$$
Thus by Proposition~\ref{p:GeneralBound} we have $J_0(\phi^{(0,m)},t) \ll (m+v(2))^5q^{-4m+v(y)}$.
Since there are $\asymp q^{m-v(y)}$ representatives of this form, their total contribution (taking into account the volume)
to~(\ref{J0Lambda}) is $\ll (m+v(2))^5 q^{-4m}$. Summing this over $0 \le v(y) \le m$ gives
$\ll m(m+v(2))^5 q^{-4m}.$
Next, consider the case $$t=\mat{x}{c}{-a}{x-b}$$
with $x \in \p$. Then similarly as before, we find $i=-m$ and $2i+j=0$.
Thus by Proposition~\ref{p:GeneralBound} we have $J_0(\phi^{(0,m)},t) \ll (m+v(2))^5q^{-4m}$.
Since there are $\asymp q^{m-1}$ representatives of this form, their total contribution
to~(\ref{J0Lambda}) is $\ll (m+v(2))^5 q^{-4m-1}$.

Adding up the contributions of all the representatives thus gives
$$ J_{\Lambda,\theta}(\phi^{(0,m)}) \ll m(m+v(2))^5 q^{-4m}.$$

{\bf \underline{Proof in the ramified case:}}
Now assume $K/F$ is a ramified field extension.
Let $t$ be a representative as in Lemma~\ref{representatives}.
Consider first the case $$t=\mat{1}{yc}{-ya}{1-yb} \in S_m.$$
Then the exact same argument as in the inert case goes through.
Next, consider the case
$$t=(x_0+y_0\Delta_0)(1+y\Delta_0)=\mat{x_0-yy_0ac}{c[y_0+y(x_0-y_0b)]}{-a[y_0+y(x_0-y_0b)]}{x_0+y[y_0(b^2-ac)-bx_0]-by_0}.$$
Recall that $x_0,y_0$ are any two elements of $\OF$ such that
\begin{equation}\label{x0y0}
    v(x_0(x_0-y_0b)+y_0^2ac)=1.
\end{equation}
We shall now choose some convenient $x_0,y_0$.
Let us now distinguish some cases.
Consider first the case $v(a)=1$.
Then we can take $y_0=1$ and $x_0=y_0b$.
Thus $v([y_0+y(x_0-y_0b)])=0$, and hence $i=-m$. Furthermore
$$2i+j=v(\det(t))=v(\det(P_0))=1,$$
and hence $j=2m+1$. From there on, the argument proceeds as in the second case of the inert case, giving a final contribution $\ll (m+v(2))^5q^{-4m-\frac12}$

Next suppose $v(a)>1$.
First, we claim that $v(b)=0$.
Indeed, if $b=\varpi b'$ with $b' \in \OF$
then we have $d=\varpi^2d'$ where $d'=b'^2-4\frac{a}{\varpi^2}c\in\OF$ and thus
$d$ does not generate the discriminant of $K/F$, contradicting Assumption~\ref{standardassumptions}. Thus we can take $y_0=1$ and $x_0=\varpi+b$.
Then $v(y_0+y(x_0-y_0b))=0$ again and the rest of the argument is the same as the  case $v(a)=1$.

Finally consider the case $v(a)=0$. For~(\ref{x0y0}) to hold,
we must have $v(y_0)=v(x_0-y_0b)=0$.
For each $0 \le k \le m$, and given a fixed set $S$ of coset representatives for $\OF/\p^m$ chosen according to our conventions, the number of elements  $y\in S$ satisfying $v(y_0+y(x_0-y_0b))=k$ equals  \[\begin{cases} q^{m-k}(1-q^{-1}) &\text{if $k<m$},\\ 1 &\text{if $k=m$}.\end{cases}\]
(To see the above, put $s=(x_0 - y_0b)$ and suppose that $v(y_0+ys)=k$. If $k=m$, note that  $y$ must be the representative of $-y_0s^{-1}$ in $S$. If $k <m$, write $y = s^{-1}(\varpi^k u - y_0)$ and observe that $u \in \OF^\times$ can take exactly one value in each coset of $\OF^\times/(1+\p^{m-k})$.)
From there on, the argument proceeds as in the first case of the inert case,
 giving a final contribution $\ll m(m+v(2))^5q^{-4m-\frac12}$.

Adding up the contributions of all the representatives again gives
$$ J_{\Lambda,\theta}(\phi^{(0,m)}) \ll m(m+v(2))^5 q^{-4m}.$$

{\bf \underline{Proof in the split case:}}
Finally assume $K=F \times F$.
We start with a technical lemma.
\begin{lemma}\label{IfqEven}
   Assume $v(2)>0$. Then we have $v(\sqrt{d}-b),v(\sqrt{d}+b) \ge v(2)$.
\end{lemma}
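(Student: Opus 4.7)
The plan is to exploit the two elementary identities
$$ (\sqrt d - b)(\sqrt d + b) = d - b^2 = -4ac, \qquad (\sqrt d - b) + (\sqrt d + b) = 2\sqrt d,$$
combined with our standing assumptions in the split case. Recall that in the split case the standard assumptions \eqref{standardassumptions} give $d \in \OF^\times$ and $c \in \OF^\times$, and we already have $a, b \in \OF$; in particular $v(\sqrt d) = 0$, $v(c) = 0$, and $v(\sqrt d \pm b) \ge 0$.

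Set $\alpha = v(\sqrt d - b)$ and $\beta = v(\sqrt d + b)$. From the product identity I get
$$ \alpha + \beta = v(-4ac) = 2v(2) + v(a),$$
and from the sum identity together with $v(2\sqrt d) = v(2)$ I get $\min(\alpha,\beta) \le v(2)$, with the refinement that $\min(\alpha, \beta) = v(2)$ whenever $\alpha \ne \beta$ (non-archimedean triangle inequality being an equality when the valuations differ). The claim is that both $\alpha, \beta \ge v(2)$.

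I would then split into cases. If $\alpha = \beta$, then $2\alpha = 2v(2) + v(a) \ge 2v(2)$, so $\alpha = \beta \ge v(2)$ (one needs $v(a) \ge 0$, which holds). If $\alpha \ne \beta$, say $\alpha < \beta$ without loss of generality, then $\alpha = v(2)$ and hence $\beta = v(2) + v(a) \ge v(2)$, again using $v(a)\ge 0$.

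I don't foresee any real obstacle here: the argument is purely a valuation count from the two elementary identities above together with $d, c \in \OF^\times$ and $a, b \in \OF$, so the proof is essentially a short case analysis. The only subtlety to handle carefully is the behaviour of the non-archimedean triangle inequality in the two subcases $\alpha = \beta$ and $\alpha \ne \beta$.
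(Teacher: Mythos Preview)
Your proof is correct and follows essentially the same approach as the paper: both use the product identity $(\sqrt{d}-b)(\sqrt{d}+b)=d-b^2\in 4\OF$ together with a linear relation between $\sqrt{d}\pm b$, then finish with a short valuation case split. The only cosmetic difference is that you use the sum $(\sqrt{d}-b)+(\sqrt{d}+b)=2\sqrt{d}$ (invoking $d\in\OF^\times$ from the split assumption), whereas the paper uses the difference $(\sqrt{d}+b)-(\sqrt{d}-b)=2b$ (needing only $b\in\OF$).
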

\begin{proof}
    We have $d \in b^2+4\OF$ thus
\begin{equation} \label{I1}
    v(\sqrt{d}+b)+v(\sqrt{d}-b) \ge v(4).
\end{equation}
But
\begin{equation}\label{I2}
    v(\sqrt{d}+b)=v(\sqrt{d}-b+2b) \ge \min\{v(2),v(\sqrt{d}-b)\}.
\end{equation}
If $v(\sqrt{d}-b) \le v(2)$ then equation~(\ref{I1}) implies
$v(\sqrt{d}+b) \ge v(2)$, while if $v(\sqrt{d}-b) \ge v(2)$ then equation~(\ref{I2}) implies $v(\sqrt{d}+b) \ge v(2)$.
The same reasoning gives $v(\sqrt{d}-b) \ge v(2)$.
\end{proof}

Now let $t$ be a representative as in Lemma~\ref{representatives},
and write
$t=x+y\Delta=\mat{x+y\frac{b}2}{yc}{-ya}{x-y\frac{b}2}$
for some $x,y \in F$.
Since by~(\ref{isoeqsplit}) the image of $P_0$ is $(\varpi^n,1)$
and the image of $S_m$ is $(\OF/\p^m)^\times \times\{1\}$,
we have
$$\begin{cases}
    x+y\frac{\sqrt{d}}2=\varpi^ku,\\
     x-y\frac{\sqrt{d}}2=1
\end{cases}$$
for some $k \in \Z$ and $u \in (\OF/\p^m)^\times$.
Equivalently,
$$\begin{cases}
    x=\frac{\varpi^ku+1}2,\\
    y=\frac{\varpi^ku-1}{\sqrt{d}}.
\end{cases}$$
We start with the case $k<0$.
Then $v(y)=k$.
Furthermore, we claim that $v(x \pm y \frac{b}2) \ge k$.
If $v(2)=0$ this is obvious.
On the other hand, we have
\begin{align*}
    x+y\frac{b}2&=\frac{\varpi^ku+1}{2}+b\frac{\varpi^ku-1}{2\sqrt{d}}\\
    &=\frac{(\sqrt{d}+b)\varpi^ku+(\sqrt{d}-b)}{2\sqrt{d}}.
\end{align*}
If $v(2)>0$, Lemma~\ref{IfqEven} proves the claim, and similarly for
$x-y\frac{b}2$.
It follows that
\begin{align*}
    i&=\min\left\{v\left(x+y\frac{b}2\right), v(y)-m, v(ya)+m, v\left(x-y\frac{b}2\right)\right\}\\
    &=k-m.
\end{align*}
Moreover, we have $$2i+j=v\left(x^2-\frac{d}{4}y^2\right)=v(\varpi^ku)=k.$$
Therefore, $j=2m-k$ and by Proposition~\ref{p:GeneralBound} we have
$$J_0(\phi^{(0,m)},t) \ll (m+v(2)-k)^5q^{-4m+\frac{k}2}.$$
Summing over $k<0$ and $u \in (\OF/\p^m)^\times$ and multiplying by $\vol(\OF^\times \bs U_T(m))$
gives a contribution  $\ll (m+v(2))^5q^{-4m-\frac{1}2}.$

Consider now the case $k>0$. Then $v(y)=0$ and, by the same reasoning as as above,
$v(x\pm  y\frac{b}2) \ge 0$, hence $$i=-m$$ and $$2i+j=k.$$
Therefore, $j=2m+k$, and from then on this case is completely analogous to the case $k<0$ after changing $k$ to $-k$.

Finally, consider the case $k=0$. We again have $v(x\pm  y\frac{b}2) \ge 0$.
Since we can always choose the representative $u \in (\OF/ \p^m)^\times$
such that $v(y) \le m$, it follows that $i=v(y)-m$ and $2i+j=0$. Therefore by Proposition~\ref{p:GeneralBound} we have
$$J_0(\phi^{(0,m)},t ) \ll (m+v(2))^5q^{-4m+v(y)}.$$
Since there are $\asymp q^{m-v(y)}$ such representatives, we get as before a contribution
$\ll m(m+v(2))^5 q^{-4m}.$

Adding up the contributions of all the representatives thus gives
$$ J_{\Lambda,\theta}(\phi^{(0,m)}) \ll m(m+v(2))^5 q^{-4m}.$$

\section{Bounds on Fourier coefficients and sup-norms of Siegel cusp forms} In this section we relate the Fourier coefficients of Siegel cusp forms of degree 2 to global Bessel coefficients and the local quantities studied in the previous section. We go on to prove the main theorems stated in the introduction.
\subsection{Siegel cusp forms and representations}
We begin with recalling classical Siegel cusp forms and their adelizations. For brevity, denote $G(R):=\GSp_4(R)$ for each ring $R$ and let $\Gamma = \Sp_4(\Z)$. Let $k$ be a positive integer.

Let $S_k(\Gamma)$ be the space of holomorphic Siegel cusp forms of degree $2$ and weight $k$ with respect to $\Gamma$. Hence, if $F \in S_k(\Gamma)$, then for all $\gamma \in \Gamma$ we have $F |_k \gamma=F$, where
\begin{equation}\label{Siegel mod form}
    (F |_k g)(Z) := \mu(g)^k j(g, Z)^{-k}
    F(g \langle Z \rangle)
\end{equation}
for $g \in G(\R)^+$ and $Z \in \H_2$, the Siegel upper half space; moreover $F$ vanishes at the cusps. For a precise formulation of this cusp vanishing condition, see~\cite[I.4.6]{Fr1991};
 for definitions and basic properties
of Siegel cusp forms we refer the reader to \cite{andzhu}.

For $F \in S_k(\Gamma)$, we define the adelization $\phi_F$ of $F$ to be the function on
$G(\A)$ defined by
\begin{equation} 
\phi_F(\gamma h_\infty k_0) =
  \mu(h_\infty)^k j(h_\infty,
  iI_2)^{-k}F(h_\infty \langle iI_2\rangle)
\end{equation}
where $\gamma \in G(\Q), h_\infty \in G(\R)^+$ and
$k_0 \in \prod_{ p<\infty } G(\Z_p)$. Then $\phi_F$ is a
well-defined function on the whole of $G(\A)$ by strong approximation,
and is a cuspidal automorphic form.

It is well-known that $\phi_F$ generates an irreducible representation  if and only if $F$ is an eigenform of all the Hecke operators \cite{NPS}. We denote this representation by $\pi_F$. Now suppose that $F$ is a Hecke eigenform. Since $F$ is of full level, it is either of general type or of Saito--Kurokawa type (see, e.g., Proposition 2.3.1 of \cite{ps22}). The representation $\pi_F$ is an irreducible cuspidal automorphic representation of $G(\A)$ of trivial central character; writing $\pi_F = \otimes_v \pi_v$, we have
\begin{itemize}
 \item The archimedean component $\pi_\infty$ is a holomorphic discrete series representation with scalar minimal $K$-type determined by the weight $k$.
 \item If $F$ is of Saito--Kurokawa type, then for a prime number $p$, the representation $\pi_p$ is of type IIb according to Table A.1 of \cite{NF}. Note that these are non-tempered, non-generic representations.
 \item If $F$ is of general type, then for a prime number $p$, $\pi_p$ is a \emph{tempered} representation of Type I in the notation of \cite{NF}.
\end{itemize}

For $F \in S_k(\Gamma)$ we define the Petersson norm \begin{equation}\label{eqn:petersson-def}
\|F\|_2^2 = \langle F, F\rangle
=
\int\limits_{\Gamma \bs \H_2} |F(Z)|^2 (\det Y)^{k - 3}\,dX\,dY.
\end{equation}
For any $\phi \in L^2(\A^\times G(\Q) \bs G(\A))$, let $\langle \phi, \phi\rangle = \int_{\A^\times G(\Q) \bs G(\A)} |\phi(g)|^2\,dg$. We have the convenient relation \begin{equation}\label{e:petersson}
  \frac{\langle F, F\rangle}{\vol(\Sp(4,\Z)\bs \H_2)} = \frac{\langle \phi_F, \phi_F \rangle}{\vol(Z(\A)G(\Q) \bs G(\A))};
\end{equation}
c.f. Section 4.1 of \cite{asgsch}.

\subsection{Bessel periods and Fourier coefficients}\label{s:besselfour} Given a fundamental discriminant $d<0$, define
\begin{equation}\label{e:defS}
S_d=  \begin{cases} \left[\begin{smallmatrix}
  \frac{-d}{4} & 0\\
 0 & 1\\\end{smallmatrix}\right] & \text{ if } d\equiv 0\pmod{4}, \\[2ex]
 \left[\begin{smallmatrix} \frac{1-d}{4} & \frac12\\\frac12 & 1\\
 \end{smallmatrix}\right] & \text{ if } d\equiv 1\pmod{4}.\end{cases}
\end{equation} Given $S_d$ as above, let the group $T_d=T_{S_d}$ over $\Q$ be defined in the same way as in \eqref{TFdefeq}.  So $T_d\simeq K^\times$ where $K=\Q(\sqrt{d})$. Note that the matrix $S_d$ above satisfies the standard assumptions \eqref{standardassumptions} at every finite prime.

Let $\psi:\Q\backslash \A \rightarrow \C^\times$ be the character such that $\psi(x) = e^{2 \pi i x}$ if $x \in \R$ and $\psi(x) = 1$ for $x \in \Z_p$.  One obtains a character $\theta_{S_d}$ of $N(\Q) \backslash N(\A)$ by $\theta_{S_d}(\mat{1}{X}{}{1}) = \psi({\rm Tr}(S_d X))$. Let $\Lambda$ be a character of $K^\times \bs \A_K^\times$ such that $\Lambda|_{\A^\times} = 1$. Then for a measurable function $\phi: \A^\times G(\Q) \bs G(\A) \rightarrow \C$, we define the Bessel period
\begin{equation}\label{defbesselnew}
  B(\phi, \Lambda) =
  \int\limits_{\A^\times T_d(\Q)\bs T_d(\A)}\;\int\limits_{N(\Q) \bs N(\A)}\phi(tn)\Lambda^{-1}(t) \theta_{S_d}^{-1}(n)\,dn\,dt,
\end{equation}
where we give the adelic groups the Tamagawa measure.

In the special case where $\phi=\phi_F$ with $F$ a Siegel cusp form of degree 2, the function  $B(\phi_F, \Lambda)$ captures information about all the Fourier coefficients of $F$. Let us now make this precise.
Let $F \in S_k(\Gamma)$. Then $F$ has a Fourier expansion \begin{equation}\label{siegelfourierexpansion}F(Z)
=\sum_{T \in \Lambda_2} a(F, T) e^{2 \pi i \Tr(TZ)}
\end{equation}
with \begin{equation}\label{e:Lambda2} \tst
 \Lambda_2 = \left\{\mat{a}{b/2}{b/2}{c}:\qquad a,b,c\in\Z, \qquad a>0, \qquad  d:=b^2 - 4ac <0\right\}.
\end{equation} 
For a matrix $T = \mat{a}{b/2}{b/2}{c}\in \Lambda_2$, we distinguish its content $c(T)=\gcd(a,b,c)$ and discriminant $\disc(T)=-4\det(T)$. Clearly, $c(T)^2$ divides $\disc(T)$.

From the invariance property of $F$, we easily see that its Fourier coefficients satisfy the relation
\begin{equation}\label{fourierinvariance}a(F, T) = \,a(F, \T{A}TA) \end{equation} for any $A \in \SL_2(\Z)$.
Therefore, for $T_1, T_2 \in \Lambda_2$, we consider an equivalence relation
$$T_1\sim T_2\qquad \Longleftrightarrow\qquad \text{there exists } A \in\SL_2(\Z) \text{ such that }\; ^t\!A T_1 A = T_2$$
and say that the matrices $T_1, T_2$ satisfying this relation are $\SL_2(\Z)$-equivalent. The equivalence class of $T$ will be denoted by $[T]$. Note that:
\begin{itemize}
\item the Fourier coefficient $a(F, T)$ depends only on the $\SL_2(\Z)$-equivalence class of $T$;
\item if $T_1\sim T_2$, then $\disc (T_1)=\disc (T_2)$ and $c(T_1)=c(T_2)$.
\end{itemize}

Given any $T \in \Lambda_2$, it is clear that there exists a fundamental discriminant $d<0$ and  positive integers $L$, $M$ such that $c(T)=L$, $\disc(T) = dL^2M^2$; furthermore $d, L,M$ are uniquely determined by $T$.
For a fundamental discriminant $d < 0$ and positive integers $L$, $M$, we let $H(dM^2; L)$ denote the set of $\SL_2(\Z)$-equivalence classes of matrices in $\Lambda_2$ such that $c(T)=L$ and $\disc(T) = dL^2M^2$. It is clear that given $[X] \in H(dM^2; L)$ and $F \in S_k(\Gamma)$, the notation $a(F, [X])$ is well-defined.

We define
$$
 \Cl_d(M) = T_d(\A) /T_d(\Q)T_d(\R)\prod_{p<\infty}U_{T_d}(m_p)
$$
where we write $M= \prod_{p<\infty}p^{m_p}$ and the subgroup $U_{T_d}(m_p) \subset T_d(\Z_p)$ is defined in Section \ref{s:levelstructure}.
It is well known that  $\Cl_d(M)$ can be naturally identified with the class group of the unique order of discriminant $D=dM^2$ in $\Q(\sqrt{d})$; in particular, $\Cl_d(1)$ is canonically isomorphic to the ideal class group of $\Q(\sqrt{d})$. From Proposition 5.3 of \cite{pssmb} we have
$
 |\Cl_d(M)| = \frac{M}{u(d)}\,|\Cl_d(1)|\,\prod_{p|M} \left( 1 -  \Big(\frac{d}p\Big) p^{-1} \right)$ where $u(-3)=3$, $u(-4)=2$ and $u(d)=1$ for other $d$. In particular this shows that \begin{equation}\label{e:boundcld}(|d|^{1/2}M)^{1-\eps} \ll_\eps |\Cl_d(M)| \ll_\eps (|d|^{1/2}M)^{1+\eps}.\end{equation}

Section 5 of \cite{pssmb} constructs for each $c \in T_d(\A)$ a matrix $\phi_{L,M}(c) \in \Lambda_2$ such that $\phi_{L,M}(c) =\mat{L}{}{}{L}\mat{M}{}{}{1}S_c\mat{M}{}{}{1}$ with $\disc(S_c) = d$ and the $(2,2)$-coefficient of $S_c$ is $1$ modulo $M$; this implies that $$c(\phi_{L,M}(c))=L, \quad \disc(\phi_{L,M}(c))=dL^2M^2.$$ The matrix $\phi_{L,M}(c)$ depends on some choices, but its class in $H(dM^2;L)$ is independent of those choices. We recall a result from \cite{pssmb}.
\begin{lemma}[Prop 5.3 of \cite{pssmb}]\label{l:classidentification}For each pair of positive integers $L,M$, the map $c \mapsto [\phi_{L,M}(c)]$ gives a bijection from $\Cl_d(M)$ to $H(dM^2; L)$.
\end{lemma}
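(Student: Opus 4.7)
The plan is to reduce the statement to the classical Gauss correspondence between the Picard group of the order $\Z+M\OF_K \subset K = \Q(\sqrt{d})$ and the $\SL_2(\Z)$-equivalence classes of primitive positive-definite integral binary quadratic forms of discriminant $dM^2$. A preliminary reduction: by the construction of Section~5 of \cite{pssmb} we have $\phi_{L,M}(c) = L\cdot\phi_{1,M}(c)$, and multiplication by $L$ sets up a bijection $H(dM^2;1)\to H(dM^2;L)$ (its inverse divides every entry of a representative by $L$, which is well defined precisely because the content equals $L$); hence it suffices to handle the case $L=1$.

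I would first verify that $c\mapsto[\phi_{1,M}(c)]$ is well defined on $\Cl_d(M)$. A change of $c$ by $\gamma\in T_d(\Q)$ rescales $S_c$ by $\Norm(\gamma)\in\Q^\times$, and this scalar is absorbed into the $\SL_2(\Z)$-class together with the normalisation that the $(2,2)$-entry of $S_c$ be $\equiv 1\pmod{M}$; a change by $T_d(\R)$ is invisible because the construction of $S_c$ uses only the finite part of $c$; and a change by $u_p\in U_{T_d}(m_p)$ corresponds under $T_d(\Q_p)\simeq K_p^\times$ to multiplication by an element of $\OF^\times(1+\p^{m_p}\OF_{K,p})$ (cf.\ Section~\ref{s:levelstructure}), which alters $S_c$ at $p$ by a $\GL_2(\Z_p)$-conjugation preserving both the discriminant and the normalisation modulo $M$.

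For bijectivity, I would use the adelic identification of $\Cl_d(M)$ with the Picard group of $\Z+M\OF_K$. Given a proper fractional $(\Z+M\OF_K)$-ideal $\mathfrak{a}_c$ representing the class of $c$, choose a $\Z$-basis $(\omega_1,\omega_2)$ of $\mathfrak{a}_c$ with $\Im(\omega_2/\omega_1)>0$, normalised so that the $(2,2)$-coefficient of the resulting Gram matrix is $\equiv 1\pmod{M}$. Unpacking the construction of $S_c$ at each finite prime through \eqref{isoeqinert}--\eqref{isoeqsplit}, the matrix $\phi_{1,M}(c)$ is visibly the Gram matrix of the norm form $\Norm(x\omega_1+y\omega_2)/\Norm(\mathfrak{a}_c)$ in this basis. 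Hence the map coincides with the classical Gauss bijection for the order of conductor $M$ (see e.g.\ Cox, \emph{Primes of the form $x^2+ny^2$}, Theorem~7.7), and bijectivity follows.

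The main technical obstacle is the prime-by-prime bookkeeping in the well-definedness step at primes dividing $2M$, where the explicit descriptions of $U_{T_d}(m_p)$, of the isomorphisms \eqref{isoeqinert}--\eqref{isoeqsplit}, and of $S_c$ have to be unpacked carefully. These are all finite local calculations, and the remainder of the argument is essentially a dictionary between the adelic and classical formulations of the same class group.
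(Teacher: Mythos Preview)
The paper does not prove this lemma at all; it simply records it as a citation of Proposition~5.3 of \cite{pssmb}. Your proposal supplies the standard argument---reduce to $L=1$ via the evident scaling bijection $H(dM^2;1)\to H(dM^2;L)$, then identify $c\mapsto[\phi_{1,M}(c)]$ with the classical Gauss correspondence between the Picard group of the order $\Z+M\OF_K$ and $\SL_2(\Z)$-classes of primitive positive-definite forms of discriminant $dM^2$. This is the correct route and is indeed what the cited reference carries out.

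One imprecision worth flagging: in your well-definedness paragraph, the assertion that a change of $c$ by $\gamma\in T_d(\Q)$ ``rescales $S_c$ by $\Norm(\gamma)$'' cannot be right as written, since $S_c$ is already normalised to have discriminant exactly $d$ (so no nontrivial rescaling survives). What actually happens in the construction of \cite{pssmb} is that the matrix $S_c$ is built from a choice of local representatives and a strong-approximation argument; two admissible choices yield matrices that are $\SL_2(\Z)$-conjugate after the $\mat{M}{}{}{1}$-twist, which is precisely the equivalence being quotiented out. This is not a gap in your overall strategy---the identification with the norm form in your final paragraph is the real content and is correct---but the bookkeeping for well-definedness should be phrased in terms of change of representative in the double coset rather than rescaling.
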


Write $L=\prod_{p<\infty}p^{\ell_p}$, $M=\prod_{p<\infty} p^{m_p}$. Define the element $H(L,M)\in G(\A)$ via
$$H(L,M)_p:=\begin{cases} h_p(\ell_p, m_p) ,& p\mid LM\\ 1 ,& p\nmid LM\mbox{ or } p=\infty\end{cases}\,,$$ where the element $h_p(\ell_p, m_p)$ was defined in \eqref{hlmdefeq}; note that we now add the subscript $p$ to avoid confusion.  For an automorphic form $\phi$ on $G(\A)$, we let $\phi^{L,M}$ be the automorphic form obtained by right-translation by $H(L,M)$, i.e., $\phi^{L,M}(g):=\phi(gH(L,M)),$ for all $g \in G(\A)$. We can now state the relation between Bessel coefficients and Fourier coefficients.
\begin{lemma}\label{lemma:relation besselfour}Let $F \in S_k(\Gamma)$ and $\phi_F$ be its adelization. For any character $\Lambda$ of $\Cl_d(M)$, we have
\begin{align*}
B(\phi_F^{L,M},\Lambda)=\frac{2}{|\Cl_d(M)|}(LM)^{-k}e^{-2\pi\Tr (S_d)}\sum_{c\in\Cl_d(M)} \Lambda^{-1}(c) a(F,\phi_{L,M}(c)).
\end{align*}
\end{lemma}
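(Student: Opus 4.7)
The strategy is a standard adelic unfolding, in two main stages: collapse the torus integral to a finite sum over $\Cl_d(M)$, then evaluate each inner $N$-integral via the Fourier expansion of $F$.

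\textbf{Step 1 (Torus descent).} For $t\in T_d(\A)$, set
\[
I_N(t):=\int_{N(\Q)\bs N(\A)}\phi_F^{L,M}(tn)\theta_{S_d}^{-1}(n)\,dn.
\]
First I would verify that $\Lambda^{-1}(t)I_N(t)$ is left $T_d(\Q)$-invariant (using that $T_d$ normalizes $N$ and preserves $\theta_{S_d}$, combined with the $G(\Q)$-invariance of $\phi_F$ and $\Lambda|_{K^\times}=1$) and right $\prod_p U_{T_d}(m_p)$-invariant. The latter is the essential role of the level shift $H(L,M)$: a $p$-by-$p$ computation shows $H(L,M)_p^{-1}U_{T_d}(m_p)H(L,M)_p\subseteq G(\Z_p)$, so right $G(\Z_p)$-invariance of $\phi_F$ absorbs the $U_{T_d}(m_p)$-translation, while $\Lambda|_{U_{T_d}(m_p)}=1$ since $\Lambda$ factors through $\Cl_d(M)$. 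Combining this with the Tamagawa measure on $\A^\times T_d(\Q)\bs T_d(\A)$ (for which $\tau(T_d)=2$ since $T_d/Z$ is anisotropic over $\Q$ when $d<0$) and integrating out the archimedean fiber gives
\[
B(\phi_F^{L,M},\Lambda)=\frac{2}{|\Cl_d(M)|}\sum_{c\in\Cl_d(M)}\Lambda^{-1}(c)\,I_N(t_c)
\]
for any set of representatives $\{t_c\}$.

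\textbf{Step 2 (Fourier coefficient extraction).} To evaluate $I_N(t_c)$, choose $t_c\in T_d(\A_f)$ trivial at infinity. Substituting $n\mapsto t_c^{-1}nt_c$ (legitimate because $\theta_{S_d}$ is $T_d$-invariant under conjugation and the Jacobian on $N$ is trivial) rewrites the integral as $\int\phi_F(n(X)\,t_cH(L,M))\psi^{-1}(\Tr(S_dX))\,dX$ over $M^\sym_2(\Q)\bs M^\sym_2(\A)$. By strong approximation for $\GSp_4$, write $t_cH(L,M)=\gamma_c g_{\infty,c}k_{f,c}$ with $\gamma_c\in G(\Q)$, $g_{\infty,c}\in G(\R)^+$, $k_{f,c}\in\prod_p G(\Z_p)$. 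Using left $G(\Q)$- and right $G(\hat{\Z})$-invariance of $\phi_F$ reduces to the archimedean incarnation, involving $F$ evaluated at $Z_c=g_{\infty,c}\langle iI_2\rangle$ translated by the continuous part of $X$. Substituting $F(Z)=\sum_T a(F,T)e^{2\pi i\Tr(TZ)}$ and applying orthogonality of additive characters on $M^\sym_2(\Q)\bs M^\sym_2(\A)\simeq(\Q\bs\A)^3$ leaves exactly the Fourier coefficient indexed by the matrix $T$ whose associated character matches $\psi(\Tr(S_dX))$ after the $t_cH(L,M)$-twist. By the construction recalled in Lemma~\ref{l:classidentification}, this matrix is precisely $T=\phi_{L,M}(c)$.

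\textbf{Step 3 (Archimedean factors).} The archimedean prefactor $\mu(g_{\infty,c})^kj(g_{\infty,c},iI_2)^{-k}$ produces $(LM)^{-k}$: for the identity class, the choice $\gamma_c=\mathrm{diag}(LM^2,LM,1,M)\in G(\Q)$ makes $\gamma_c^{-1}t_cH(L,M)\in \prod_p G(\Z_p)$, giving $g_{\infty,c}=\gamma_c^{-1}$, so $\mu(g_{\infty,c})^k=(LM^2)^{-k}$ and $j(g_{\infty,c},iI_2)^{-k}=M^k$, with product $(LM)^{-k}$; an entirely analogous calculation handles general $c$ (since $\mu(t_c)\in\A_\Q^{\times,1}$ adds only unit adjustments at each prime). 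The exponential $e^{2\pi i\Tr(\phi_{L,M}(c)\,Z_c)}$ evaluates uniformly to $e^{-2\pi\Tr(S_d)}$ because $Z_c=i\,\mathrm{diag}((LM^2)^{-1},L^{-1})$ is tailored so that, via $\phi_{L,M}(c)=L\,\mathrm{diag}(M,1)\,S_c\,\mathrm{diag}(M,1)$, we have $\Tr(\phi_{L,M}(c)\cdot\mathrm{diag}((LM^2)^{-1},L^{-1}))=\Tr(S_c)=\Tr(S_d)$ when the representatives $S_c$ are chosen with trace equal to $\Tr(S_d)$ (which is compatible with the normalization ``$(2,2)$-coefficient $\equiv 1\pmod M$'' from \cite{pssmb}).

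\textbf{Main obstacle.} The delicate point is matching the adelic element $t_cH(L,M)$ to the rational matrix $\phi_{L,M}(c)$ in the Fourier coefficient and verifying that the archimedean normalizations combine to exactly $(LM)^{-k}e^{-2\pi\Tr(S_d)}$ uniformly in $c$. This requires careful $p$-by-$p$ tracking through the isomorphism $T_d(\A)\simeq\A_K^\times$ and the class-group representative construction of Section~5 of \cite{pssmb}. The measure normalization producing $2/|\Cl_d(M)|$ (via $\tau(T_d)=2$) is also a non-trivial bookkeeping step.
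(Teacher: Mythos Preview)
Your approach is the standard adelic unfolding, which is precisely what the paper defers to (its proof is a one-line citation to \cite{marzec21}), so there is no divergence in strategy.

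One point in Step~3 needs correction. For a general class $c$ the archimedean point $Z_c=\gamma_c^{-1}\langle iI_2\rangle$ genuinely depends on $c$, so it is not $i\,\mathrm{diag}((LM^2)^{-1},L^{-1})$ except when $t_c=1$, and one cannot in general arrange $\Tr(S_c)=\Tr(S_d)$ (distinct reduced forms of the same discriminant typically have different traces). The uniform factor $e^{-2\pi\Tr(S_d)}$ instead falls out automatically: since $t_c H(L,M)$ lies in the Siegel Levi, strong approximation for the Levi lets you take $\gamma_c=\mat{A_c}{}{}{D_c}$ with $D_c=\mu_c\,{}^t\!A_c^{-1}$; the change of variables $n\mapsto\gamma_c^{-1}n\gamma_c$ converts the $\theta_{S_d}$-period into a $\theta_{S'}$-period with $S'=\mu_c^{-1}\,{}^t\!A_cS_dA_c$, and then
\[
\Tr\bigl(S'\cdot\Im(Z_c)\bigr)=\Tr\bigl(\mu_c^{-1}\,{}^t\!A_cS_dA_c\cdot A_c^{-1}D_c\bigr)=\Tr(S_d)
\]
by cyclicity of trace, independently of $c$. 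The identification $S'=\phi_{L,M}(c)$ and the check that $\mu_c^k\det(A_c)^{-k}=(LM)^{-k}$ uniformly in $c$ are then exactly the content of the construction in \cite[\S5]{pssmb}, so your reference to Lemma~\ref{l:classidentification} is apt but carries more weight than your sketch suggests.
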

\begin{proof}This is a standard calculation; see, e.g., the proof of \cite[Theorem 3]{marzec21}.
\end{proof}

Applying orthogonality relations and noting the bijection between $\Cl_d(M)$ and $H(dM^2; L)$, we obtain the key formula
\begin{align}\label{coeff-formula}
\sum_{[T] \in H(dM^2; L)}|a(F,[T])|^2=& \frac{1}{4}(LM)^{2k}e^{4\pi\Tr (S_d)} |\Cl_d(M)|\sum_{\Lambda\in\widehat{\Cl_d(M)}} |B(\phi_F^{L,M},\Lambda)|^2.
\end{align}

\subsection{The refined GGP identity and bounds on Fourier coefficients}
In the case that $F \in S_k(\Gamma)$ is a Hecke eigenform, we can refine the formula \eqref{coeff-formula}. In this case, the adelization of $F$ generates an irreducible automorphic representation $\pi_F$. Let $K=\Q(\sqrt{d})$ and $\Lambda$ be a character of $K^\times \bs \A_K^\times$ such that $\Lambda|_{\A^\times} = 1$. Liu \cite{yifengliu} formulated a precise refinement of the Gan--Gross--Prasad conjecture  for $(\SO(5), \SO(2))$ which applies in particular to the Bessel periods $B(\phi, \Lambda)$ where $\phi$ is any automorphic form in the space of $\pi_F$. Recently Furusawa and Morimoto have proved this conjecture \cite{FM22} for tempered representations; in particular their result applies to  $\pi_F$ whenever $F$ is not of Saito--Kurokawa type (the Saito--Kurokawa case is easier and can be dealt with separately; see, e.g., \cite{qiu}).  \begin{theorem}[Furusawa--Morimoto, Theorem 1.2 of \cite{FM22}]\label{c:liu}
 Let $\pi_F = \otimes_v \pi_v$ and $\Lambda$ be as above and assume that $\pi_F$ is tempered. Let $\phi=\otimes_v \phi_v$ be an automorphic form in the space of $\pi_F$. Let $S$ be a set of places including the archimedean place such that $\pi_v, K_v$, $\phi_v$, and $\Lambda_v$  are all unramified for $v \notin S$. Then
 \begin{equation}\label{e:refggpnew}
  \frac{|B(\phi, \Lambda)|^2}{\langle \phi, \phi \rangle} = \frac{C_T}{S_{\pi_F}}\frac{\xi(2)\xi(4)L^S(1/2, \pi_F \times \AI(\Lambda^{-1}))}{L^S(1, \pi_F, \Ad)L^S(1, \chi_d)} \prod_{v\in S} J_{\Lambda_v, \theta_v}(\phi_v),
 \end{equation}
 where $\xi(s) = \pi^{-s/2}\Gamma(s/2)\zeta(s)$ denotes the completed Riemann zeta function, $C_T$ is a constant relating our choice of local and global Haar measures, $S_{\pi_F}$ denotes a certain integral power of 2, related to the Arthur parameter of $\pi_F$, and  $J_{\Lambda_v, \theta_v}(\phi_v)$ equals the local Bessel integral defined in Section \ref{s:besselmodels}.
\end{theorem}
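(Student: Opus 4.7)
The statement is a case of the refined Gan--Gross--Prasad identity for $(\SO(5),\SO(2))$ for tempered representations, proved by Furusawa--Morimoto in \cite{FM22}; any honest proof given here would simply invoke their theorem. I nonetheless sketch the general Ichino--Ikeda-type strategy behind such refined period formulas, since that strategy clarifies where each factor on the right-hand side of \eqref{e:refggpnew} comes from.

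The plan is as follows. First, fix compatible factorizations of the Tamagawa measure and of the global inner product $\langle\phi,\phi\rangle = \prod_v \langle\phi_v,\phi_v\rangle_v$; their discrepancy is absorbed into the global constant $C_T$. Expanding $|B(\phi,\Lambda)|^2$ as a double Bessel integral and passing to matrix coefficients formally produces an Euler product of the regularized local Bessel integrals $J_{\Lambda_v,\theta_v}(\phi_v)$, so one obtains an identity of the shape $|B(\phi,\Lambda)|^2/\langle\phi,\phi\rangle = C\cdot\prod_v J_{\Lambda_v,\theta_v}(\phi_v)$ for a single global constant $C$; the content of the theorem is the explicit evaluation of $C$ as the displayed ratio of $L$-values. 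To pin down $C$, I would compare the relative trace formula on $\GSp(4)$ attached to the Bessel period against $(\Lambda,\theta)$ with an auxiliary trace formula in which the analogous period identity is already known. The most natural choice uses the theta correspondence between $\GSp(4)$ and an orthogonal similitude group whose quadratic space is determined by $(d,\Lambda)$, under which Bessel periods transfer to toric periods on an inner form of $\GL_2$ over $K=\Q(\sqrt{d})$. On that side Waldspurger's explicit central value formula supplies the ratio $L(1/2,\pi\times\AI(\Lambda^{-1}))/[L(1,\pi,\Ad)L(1,\chi_d)]$, while the Rallis inner product formula accounts for the archimedean normalizing factor $\xi(2)\xi(4)$; the integer power $S_{\pi_F}$ of $2$ encodes the size of the global component group in Arthur's multiplicity formula.

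The principal obstacles are the fundamental lemma and smooth transfer matching local orbital integrals on the two sides of the comparison --- especially at places where $\Lambda$ or $\pi$ ramifies --- and the archimedean comparison, which demands careful analysis of non-$L^1$ distributions on the Bessel side. Tracking the exact measure-theoretic constants to arrive at the stated cleanly normalized identity is a further nontrivial bookkeeping task, since it requires pinning down Tamagawa measure conventions and Langlands' normalization of intertwining operators simultaneously. For the purposes of the present paper, it suffices to take \eqref{e:refggpnew} as given and to bound the local factors $J_{\Lambda_v,\theta_v}(\phi_v)$ at the ramified finite primes, which is the content of Propositions~\ref{p:localboundbesselsugano} and~\ref{p:localboundJp} proved in the preceding section.
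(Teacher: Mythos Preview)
Your assessment is correct: the paper gives no proof of this statement whatsoever --- it is simply quoted as Theorem~1.2 of \cite{FM22} and then used as a black box in the proof of Proposition~\ref{p:main}. Your opening sentence, that any honest proof here would just invoke Furusawa--Morimoto, is exactly the paper's approach; the expository sketch you add of the Ichino--Ikeda strategy is reasonable context but goes beyond anything the paper itself supplies.
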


\begin{proposition}\label{p:main}Let $F \in S_k(\Gamma)$ and let $\phi_F$ be its adelization. Assume that $F$ is not a Saito--Kurokawa lift and let $\pi_F$ be the automorphic representation generated by $F$.  Let $L$, $M$ be positive integers and let $\Lambda$ be a character of $\Cl_d(M)$. Then
\[\frac{|B(\phi_F^{L,M},\Lambda)|^2}{\langle \phi_F, \phi_F\rangle} \ll_\eps   \frac{(L Md)^\eps}{M^4L^{3}} \frac{(4    \pi)^{2k}  |d|^{k-2} e^{-4\pi {\rm Tr}(S_d)}}{\Gamma(2k-1)} \frac{L(1/2, \pi_F \times \AI(\Lambda^{-1}))}{L(1, \pi_F, \Ad)}.\]
\end{proposition}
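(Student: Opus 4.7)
The plan is to apply the refined Gan--Gross--Prasad identity of Theorem~\ref{c:liu} directly to the translated adelic form $\phi_F^{L,M}$, with the set of ramified places taken to be $S = \{\infty\} \cup \{p < \infty : p \mid LMd\}$. Since right translation preserves the $L^2$-inner product, $\langle \phi_F^{L,M}, \phi_F^{L,M}\rangle = \langle \phi_F, \phi_F\rangle$. Viewing $\phi_F$ as a pure tensor $\otimes_v \phi_{F,v}$ in $\pi_F$, its translate factors at each prime $p$ as $\pi_p(h_p(\ell_p, m_p))\phi_{F,p}$ with $\ell_p = v_p(L)$, $m_p = v_p(M)$ (both zero away from $LM$), while $\phi_{F,\infty}^{L,M} = \phi_{F,\infty}$ remains the standard holomorphic weight-$k$ vector.

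The crux of the bound is controlling the product $\prod_{v\in S} J_{\Lambda_v,\theta_v}(\phi_{F,v}^{L,M})$. At each finite prime $p\mid LM$, the character $\Lambda_p$ has $c(\Lambda_p)\le m_p$ because $\Lambda$ is trivial on $U_{T_d}(m_p)$. Using the matrix-coefficient identity $|B_v(g)|^2 = c\,J_{\Lambda,\theta}(\pi(g)v)$ from Section~\ref{s:besselmodels} together with Proposition~\ref{p:localboundbesselsugano}, I would bound
\[
J_{\Lambda_p,\theta_p}(\phi_{F,p}^{L,M}) \;\ll\; (\ell_p + m_p + 1)^{12}\, p^{-3\ell_p - 4(m_p - c(\Lambda_p))}\, J_{\Lambda_p,\theta_p}(\phi_{F,p}^{(0,c(\Lambda_p))}),
\]
and then invoke Proposition~\ref{p:localboundJp} to bound the remaining factor by $(c(\Lambda_p)+1)^6 p^{-4c(\Lambda_p)}$. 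Multiplying over $p\mid LM$ yields $M^{-4}L^{-3}(LM)^{\varepsilon}$. At primes $p\mid d$ with $p\nmid LM$ the vector is spherical and $c(\Lambda_p)=0$, and the corresponding Bessel integral is $O(1)$ by Theorem~2.1 of \cite{DPSS15}.

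For the archimedean factor, which is unaffected by $H(L,M)$, I would invoke the explicit computation of $J_{\Lambda_\infty,\theta_\infty}(\phi_{F,\infty})$ for the holomorphic scalar-weight-$k$ vector; this reduces, via the known formula for the matrix coefficient of a holomorphic discrete series, to a beta-type integral, and yields the factor $\tfrac{(4\pi)^{2k}|d|^{k-2}e^{-4\pi\Tr(S_d)}}{\Gamma(2k-1)}$ up to an absolute constant, as in \cite{DPSS15}*{Proposition~3.5}. The passage from partial to complete $L$-functions is routine: temperedness of $\pi_v$ and $\mathrm{AI}(\Lambda)_v$ makes the local Euler factors at the finitely many places in $S$ polynomially bounded in the residue characteristic, contributing only $(LMd)^\varepsilon$; the factor $L^S(1,\chi_d)^{-1}$ is absorbed via $L(1,\chi_d) \gg_\varepsilon |d|^{-\varepsilon}$. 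Collecting everything gives the claimed inequality.

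The genuinely hard input is the pair of local bounds Propositions~\ref{p:localboundbesselsugano}--\ref{p:localboundJp}, but these are already in hand; the remaining work is careful bookkeeping of the ramified local factors together with the archimedean calculation. The most delicate point in the bookkeeping is ensuring the powers of $|d|$, $L$ and $M$ match exactly, which requires tracking the Tamagawa measure normalizations implicit in Theorem~\ref{c:liu} against those used in defining the local Bessel integrals $J_{\Lambda_v,\theta_v}$.
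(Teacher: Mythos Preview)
Your proposal is correct and follows essentially the same strategy as the paper. The only organizational difference is that the paper first factors the global Bessel period as
\[
B(\phi_F^{L,M},\Lambda) = \Bigl(\prod_{p\mid LM} \tfrac{B_{\phi_p,\Lambda_p}(h_p(\ell_p,m_p))}{B_{\phi_p,\Lambda_p}(h_p(0,c(\Lambda_p)))}\Bigr) B(\phi_F^{1,C(\Lambda)},\Lambda)
\]
via uniqueness of Bessel models and then applies Theorem~\ref{c:liu} to $\phi_F^{1,C(\Lambda)}$, whereas you apply Theorem~\ref{c:liu} directly to $\phi_F^{L,M}$ and then reduce each local $J_{\Lambda_p,\theta_p}(\phi_{F,p}^{(\ell_p,m_p)})$ to $J_{\Lambda_p,\theta_p}(\phi_{F,p}^{(0,c(\Lambda_p))})$ via the identity $|B_v(g)|^2 = c\,J_{\Lambda,\theta}(\pi(g)v)$; these are equivalent by that very identity, and both routes invoke the same local inputs (Propositions~\ref{p:localboundbesselsugano}, \ref{p:localboundJp}, and the archimedean calculation from \cite{DPSS15}).
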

\begin{proof}
 We return to the setup of Section \ref{s:besselfour} but assume also that $F$ is a Hecke eigenform. Write $\phi_F = \otimes_v \phi_v$, $\pi_F =\otimes_v \pi_v$ and $\phi_F^{L,M} = \otimes_v \phi_v^{(l_v,m_v)}$. By the uniqueness of local and global Bessel functions, we have the following relation (see also Lemma 5 in \cite{marzec21}).
\begin{equation}\label{e:keyfactorizationbessel}B(\phi_F^{L,M},\Lambda) = \(\prod_{p|LM} \frac{B_{\phi_p, \Lambda_p}(h_p(\ell_p, m_p))}{B_{\phi_p, \Lambda_p}(h_p(0, c(\Lambda_p)))}\) B(\phi_F^{1,C(\Lambda)},\Lambda)
\end{equation} where $C(\Lambda) = \prod_{p|M} p^{c(\Lambda_p)}$.
Using Proposition \ref{p:localboundbesselsugano}, we therefore obtain
\begin{equation}\label{e:besselreduction}|B(\phi_F^{L,M},\Lambda)|^2 \ll_\eps (L M)^\eps \frac{C(\Lambda)^4}{M^4L^{3}} \ |B(\phi_F^{1,C(\Lambda)},\Lambda)|^2.\end{equation}
On the other hand, using \eqref{e:refggpnew}, Proposition \ref{p:localboundJp}, and the computation $C_TJ_\infty \asymp \frac{(4    \pi)^{2k}  |d|^{k-2} e^{-4\pi {\rm Tr}(S_d)}}{\Gamma(2k-1)L(1, \chi_d)}$ which follows from \cite[Sec. 3.5]{DPSS15} we have \begin{equation}\label{e:keyboundbessel2}\frac{|B(\phi_F^{1,C(\Lambda)},\Lambda)|^2}{\langle \phi_F, \phi_F\rangle} \ll_\eps C(\Lambda)^{-4 + \eps} \frac{(4    \pi)^{2k}  |d|^{k-2} e^{-4\pi {\rm Tr}(S_d)}}{\Gamma(2k-1)} \frac{L(1/2, \pi_F \times \AI(\Lambda^{-1}))}{L(1, \pi_F, \Ad)L(1, \chi_d)^2}\end{equation}
The proof follows by combining \eqref{e:besselreduction}, \eqref{e:keyboundbessel2} and the well-known fact $L(1, \chi_d) \gg_\eps |d|^{-\eps}$.
\end{proof}
We are ready to prove our main theorem.
\begin{theorem}\label{t:mainfouriergen}Let $F \in S_k(\Gamma)$ be a Hecke eigenform that is not a Saito--Kurokawa lift and let $\pi_F$ be the automorphic representation generated by $F$.  Let $d<0$ be a fundamental discriminant and let $L$, $M$ be positive integers.  \begin{enumerate}
\item For each character $\Lambda$ of $\Cl_d(M)$, we have \[\left|\sum_{c\in\Cl_d(M)} \Lambda(c) a(F,\phi_{L,M}(c)) \right|^2 \ll_\eps  \langle F, F \rangle \frac{(4    \pi)^{2k}  L^{2k-3+\eps}|dM^2|^{k-1+\eps} }{\Gamma(2k-1)} \frac{L(1/2, \pi_F \times \AI(\Lambda))}{L(1, \pi_F, \Ad)}.  \]

\item \label{e:second} We have the bound
\[\sum_{[T] \in H(dM^2; L)}|a(F,[T])|^2 \ll_\eps \langle F, F \rangle \frac{(4 \pi)^{2k}}{\Gamma(2k-1)} L^{2k-3 + \eps}|dM^2|^{k - \frac{3}{2} + \eps} \sum_{\Lambda\in\widehat{\Cl_d(M)}} \frac{L(1/2, \pi_F \times \AI(\Lambda))}{L(1, \pi_F, \Ad)}.\]
\end{enumerate}
\end{theorem}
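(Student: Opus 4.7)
The plan is to reduce both parts of the theorem to Proposition~\ref{p:main}. Part~(1) will follow by inverting the explicit formula of Lemma~\ref{lemma:relation besselfour}, and Part~(2) will follow by applying Proposition~\ref{p:main} termwise to the orthogonality identity \eqref{coeff-formula}. The remaining work amounts to bookkeeping of exponents, using the elementary upper bound $|\Cl_d(M)| \ll_\eps (|d|M^2)^{1/2+\eps}$ from \eqref{e:boundcld} and the Petersson comparison \eqref{e:petersson}.

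For Part~(1), I would apply Lemma~\ref{lemma:relation besselfour} with $\Lambda$ replaced by $\Lambda^{-1}$ and solve for the character sum, obtaining an expression of the shape
\[
\sum_{c\in\Cl_d(M)}\Lambda(c)\,a(F,\phi_{L,M}(c)) \;=\; \tfrac{|\Cl_d(M)|}{2}(LM)^{k}e^{2\pi\Tr(S_d)}\,B(\phi_F^{L,M},\Lambda^{-1}).
\]
Squaring and inserting the bound from Proposition~\ref{p:main} (applied with $\Lambda$ replaced by $\Lambda^{-1}$), the exponentials $e^{\pm 4\pi\Tr(S_d)}$ cancel exactly. Collecting the resulting factors of $L$, $M$ and $|d|$, and using $|\Cl_d(M)|^2 \ll_\eps (|d|M^2)^{1+\eps}$, yields the stated bound. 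The factor $L(1/2,\pi_F\times\AI(\Lambda))$ in the theorem statement is precisely what arises from Proposition~\ref{p:main} after the substitution $\Lambda \mapsto \Lambda^{-1}$.

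For Part~(2), I would substitute the bound from Proposition~\ref{p:main} directly into each summand on the right-hand side of \eqref{coeff-formula}, and then sum over $\Lambda \in \widehat{\Cl_d(M)}$. The only structural difference with Part~(1) is that \eqref{coeff-formula} carries one factor of $|\Cl_d(M)|$ instead of $|\Cl_d(M)|^2$, so one loses only a factor $(|d|M^2)^{1/2+\eps}$ rather than $(|d|M^2)^{1+\eps}$; this is exactly what distinguishes the exponent $k - 3/2$ in Part~(2) from the exponent $k-1$ appearing (effectively) in Part~(1). A harmless reindexing $\Lambda \mapsto \Lambda^{-1}$ of the character sum converts the $\AI(\Lambda^{-1})$ coming from Proposition~\ref{p:main} into the $\AI(\Lambda)$ appearing in the statement, since $\widehat{\Cl_d(M)}$ is a finite abelian group and the $L$-values are non-negative.

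There is no genuine obstacle at this stage: the analytic heart of the argument, namely controlling $|B(\phi_F^{L,M},\Lambda)|^2$ in terms of a central $L$-value with fully explicit constants, has already been carried out in Proposition~\ref{p:main}. That proposition in turn rests on the Furusawa--Morimoto refinement of Gan--Gross--Prasad together with the local estimates of Propositions~\ref{p:localboundbesselsugano} and~\ref{p:localboundJp}. Relative to these inputs, the deduction of Theorem~\ref{t:mainfouriergen} is a short and essentially mechanical matter of combining the global formulae with Proposition~\ref{p:main} and tracking the resulting exponents carefully.
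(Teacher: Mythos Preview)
Your proposal is correct and follows essentially the same route as the paper: both parts are deduced from Proposition~\ref{p:main} via Lemma~\ref{lemma:relation besselfour} (for Part~(1)) and \eqref{coeff-formula} (for Part~(2)), together with \eqref{e:petersson} and \eqref{e:boundcld}. Your write-up simply spells out in more detail the exponent bookkeeping and the $\Lambda\mapsto\Lambda^{-1}$ reindexing that the paper leaves implicit.
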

\begin{proof}The first assertion follows from \eqref{e:petersson}, \eqref{e:boundcld}, Lemma \ref{lemma:relation besselfour} and Proposition \ref{p:main}. The second assertion follows from \eqref{e:petersson}, \eqref{e:boundcld}, \eqref{coeff-formula} and Proposition \ref{p:main}.
\end{proof}
\begin{remark}Note that Theorem \ref{t:mainfourier} is just a restatement of part \eqref{e:second} of Theorem \ref{t:mainfouriergen}. In the terminology of Theorem \ref{t:mainfourier} we have $D=dM^2$, $H_D = \Cl_d(M)$.
\end{remark}
\begin{corollary}\label{FourierCoeffBound}
Let $F \in S_k(\Gamma)$ be a Hecke eigenform that is not a Saito--Kurokawa lift. Let $d<0$ be a fundamental discriminant and $M$ a positive integer.  Assume that for some real numbers $\alpha, \beta$, the bound
    \begin{equation}\label{e:reqboundGRH}\frac{L(1/2, \pi_F \times \AI(\Lambda))}{L(1, \pi_F, \Ad)}\ll_\epsilon k^{\alpha+\epsilon} (|d|M^2)^{\beta+\epsilon}\end{equation}
holds for all $\Lambda \in \widehat{\Cl_d(M)}$. Then
    $$\frac1{\langle F,F \rangle}
\sum_{[T] \in H(dM^2; L)} |a(F,[T])|^2
\ll_\eps    \frac{k^{1/2 + \alpha + \eps} (2\pi)^{2k}}{\Gamma(k)^2} L^{-1-2\beta} |dM^2L^2|^{k-1 + \beta + \eps}.$$ Furthermore, for any $T \in \Lambda_2$, we have $$\frac{|a(F,T)|}{\|F\|_2} \ll_{\beta,\eps}  \frac{k^{\frac14 + \frac{\alpha}{2}+\eps} (4\pi)^k}{\Gamma(k)} c(T)^{-\frac12 - \beta} \det(T)^{\frac{k-1+\beta}2+\eps}.$$
\end{corollary}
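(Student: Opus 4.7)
The plan is to deduce both bounds from Theorem~\ref{t:mainfouriergen}(2) by inserting the $L$-value hypothesis, estimating the size of the character group via \eqref{e:boundcld}, and applying Legendre's duplication formula to convert $\Gamma(2k-1)$ to $\Gamma(k)^2$.

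First, I would start from the inequality
\[\sum_{[T] \in H(dM^2; L)}|a(F,[T])|^2 \ll_\eps \langle F, F \rangle \frac{(4 \pi)^{2k}}{\Gamma(2k-1)} L^{2k-3 + \eps}|dM^2|^{k - \frac{3}{2} + \eps} \sum_{\Lambda\in\widehat{\Cl_d(M)}} \frac{L(1/2, \pi_F \times \AI(\Lambda))}{L(1, \pi_F, \Ad)}.\]
Using the hypothesis \eqref{e:reqboundGRH} to bound each $L$-ratio by $k^{\alpha+\eps}(|d|M^2)^{\beta+\eps}$, and using \eqref{e:boundcld} to bound the number of characters $|\widehat{\Cl_d(M)}|=|\Cl_d(M)|\ll_\eps (|d|^{1/2}M)^{1+\eps}$, the $\Lambda$-sum is controlled by $k^{\alpha+\eps}(|d|M^2)^{\beta+\eps}(|d|^{1/2}M)^{1+\eps}$.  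Next, Stirling (or equivalently the duplication formula applied to $\Gamma(2k) = \frac{2^{2k-1}}{\sqrt\pi}\Gamma(k)\Gamma(k+1/2)$ together with $\Gamma(k+1/2)\asymp \sqrt{k}\,\Gamma(k)$) gives
\[\frac{(4\pi)^{2k}}{\Gamma(2k-1)} \asymp \frac{\sqrt{k}\,(2\pi)^{2k}}{\Gamma(k)^2}.\]
Combining the three estimates and grouping the powers of $L$, $|d|$, $M$, I would obtain, after rewriting $|d|^{k-1+\beta+\eps}M^{2k-2+2\beta+\eps}L^{2k-3+\eps}$ as $L^{-1-2\beta}|dM^2L^2|^{k-1+\beta+\eps}$, the first claimed bound.

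For the individual bound, I would simply drop all but one term from the left-hand side of the first bound. Given $T\in \Lambda_2$, write $c(T)=L$ and $\disc(T)=dL^2M^2$ with $d<0$ the associated fundamental discriminant (this is always possible since $T/L\in\Lambda_2$ has trivial content, so its discriminant is $dM^2$ for some fundamental $d$ and some positive integer $M$). Then $[T]\in H(dM^2;L)$ is one of the classes in the sum, so
\[\frac{|a(F,T)|^2}{\|F\|_2^2}\ll_\eps \frac{k^{1/2+\alpha+\eps}(2\pi)^{2k}}{\Gamma(k)^2} L^{-1-2\beta} |dM^2L^2|^{k-1+\beta+\eps}.\]
Taking square roots and using $|dM^2L^2|=4\det(T)$ and $L=c(T)$ gives a factor $(4\det T)^{(k-1+\beta)/2+\eps} = 2^{k-1+\beta}\det(T)^{(k-1+\beta)/2+\eps}$, which combines with $(2\pi)^k$ to yield $(4\pi)^k$ up to a constant depending only on $\beta$. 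This produces exactly the second claimed bound
\[\frac{|a(F,T)|}{\|F\|_2}\ll_{\beta,\eps} \frac{k^{1/4+\alpha/2+\eps}(4\pi)^k}{\Gamma(k)}c(T)^{-1/2-\beta}\det(T)^{(k-1+\beta)/2+\eps}.\]

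The argument has no real obstacle: the only mildly delicate point is checking that the powers of $L$, $|d|$, $M$ recombine into the compact form $L^{-1-2\beta}|dM^2L^2|^{k-1+\beta+\eps}$, and that the duplication-formula conversion is handled cleanly so that the exponent $1/2$ of $k$ (which becomes $1/4$ after taking square roots) is tracked correctly. All other steps are bookkeeping, and no further input beyond Theorem~\ref{t:mainfouriergen}(2), the hypothesis \eqref{e:reqboundGRH}, and the elementary estimate \eqref{e:boundcld} is needed.
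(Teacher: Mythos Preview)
Your proposal is correct and follows essentially the same route as the paper's own proof: substitute the hypothesis \eqref{e:reqboundGRH} into Theorem~\ref{t:mainfouriergen}(2), apply the duplication formula to rewrite $(4\pi)^{2k}/\Gamma(2k-1)$ as $\asymp k^{1/2}(2\pi)^{2k}/\Gamma(k)^2$, and then drop all but one term for the individual bound. Your write-up is in fact slightly more explicit than the paper's, since you spell out the use of \eqref{e:boundcld} to control $|\widehat{\Cl_d(M)}|$ and track the recombination of the powers of $L$, $|d|$, $M$; the paper leaves these as routine bookkeeping.
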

\begin{proof}
The first assertion follows by substituting the bound \eqref{e:reqboundGRH} into Theorem \ref{t:mainfourier} and using the duplication formula for the Gamma function. For the second assertion, we note that for any $T \in \Lambda_2$ we have $[T] \in H(dM^2; L)$ where we put $L = c(T)$, $dM^2 = -\frac{4\det(T)}{c(T)^2}$; now the second assertion follows from the first by dropping all but one term.
\end{proof}

\begin{remark}
   For $\Lambda \in \widehat{\Cl_d(M)}$, the analytic conductor of $\AI(\Lambda)$ is bounded by $|d|M^2$ \cite[(a2)]{Rohrlich1994}. As the analytic conductor of $\pi_F$ is $\asymp k^2$, the convexity bound gives $L(1/2, \pi_F \times \AI(\Lambda))  \ll_\eps  (k|d|M^2)^{1+\eps}$ and the  Generalized Lindel\"of hypothesis asserts that   $L(1/2, \pi_F \times \AI(\Lambda))  \ll_\eps  (k|d|M^2)^{\eps}.$  On the other hand, lower bounds for $L(1, \pi_F, \Ad)$ follow from good zero-free regions. In particular, it is known unconditionally (use \cite[Theorem 3]{brumley06} and functoriality) that there exists an absolute constant $A$ such that  $L(1, \pi_F, \Ad) \gg k^{-A}$; under GRH we have $L(1, \pi_F, \Ad) \gg_\eps k^{-\eps}$. It follows from the above discussion that under GRH we may take $\alpha = \beta = 0$ in \eqref{e:reqboundGRH}.
\end{remark}

\subsection{Sup-norms of Siegel cusp forms}

\subsubsection{Preparatory lemmas}
The results in this subsection are implicitly or explicitly contained in~\cite{Blomer}.
Let $Y=\mat{x}{\frac{y}2}{\frac{y}2}{z} \in \Mat_2(\R)$. We say that $Y$ is Minkowski-reduced if the following holds
\begin{alignat*}{3}
|y| \le x \le z, &\qquad x \gg 1, &\qquad \det Y \asymp xz.
\end{alignat*}
Let $T = \mat{a}{\frac{b}2}{\frac{b}2}{c} \in \Lambda_2$. Then $TY$ is diagonalizable over $\R$ with positive eigenvalues; we shall denote them by $x_1,x_2$.
We need some control over the counting function
$$N_Y(\lambda,h)=\# \{ T \in \Lambda_2 : \lambda-h \le  x_1, x_2 \le \lambda+h\}.$$
The first lemma provides a trivial bound.

\begin{lemma}\label{trivialcount}
   Assume that $Y$ is Minkowski-reduced.
   Then for $\lambda>0$ and $h>1$ we have
     $$N_Y(\lambda,h) \ll \frac{h(\lambda+h)^2}{(\det Y)^{\frac32}}.$$
\end{lemma}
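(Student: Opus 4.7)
The fact that both eigenvalues of $TY$ lie in $[\lambda-h,\lambda+h]$ implies in particular that $\Tr(TY) = ax+by/2+cz$ lies in $I:=[2(\lambda-h),2(\lambda+h)]$. Since this is a linear condition on $(a,b,c) \in \Z^3$, my plan is to bound $N_Y(\lambda,h)$ by the count of integer triples $(a,b,c)$ satisfying $T \succ 0$ and $\Tr(TY) \in I$, and to estimate this count via a Lebesgue volume computation.

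For the volume, I apply the change of variables $T \mapsto S := Y^{1/2} T Y^{1/2}$. As a linear map on the three-dimensional space of $2\times 2$ real symmetric matrices, this map has Jacobian determinant $D^{3/2}$ and preserves both positivity and the trace (since $\Tr(TY)=\Tr S$). Writing $S = \mat{a'}{b'/2}{b'/2}{c'}$ and introducing coordinates $u = a'+c'$ and $v = a'-c'$, the condition $S \succ 0$ becomes $v^2 + (b')^2 < u^2$, while $\Tr S \in I$ becomes $u \in I$. Including the Jacobian $1/2$ for the substitution $(a',c')\mapsto(u,v)$ yields
\[
\vol\{S\succ 0:\Tr S\in I\}=\tfrac{1}{2}\int_I\pi u^2\,du \asymp h(\lambda+h)^2,
\]
which translates to a volume of $\asymp h(\lambda+h)^2/D^{3/2}$ in the original $(a,b,c)$-coordinates.

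To conclude, I apply a standard lattice-point-counting estimate for bounded convex regions (for instance, covering each lattice point by a unit cube and noting that the resulting disjoint union lies in the Minkowski sum of the region with $[0,1]^3$) to obtain $N_Y(\lambda,h) \ll h(\lambda+h)^2/D^{3/2}$. The main technical point is verifying that the boundary contribution is dominated by the main volume term. This is subtle because the trace slab can be thin in some coordinate directions of $(a,b,c)$-space---notably when $h \ll z$, in which case the slab has perpendicular width $\asymp h/z < 1$---so one must argue that the relevant cross-sectional area is still controlled appropriately. The Minkowski-reduced structure of $Y$ (which yields $|y| \le x \le z$ and $\det Y \asymp xz$, controlling the aspect ratio of the region after the change of variables) together with the hypothesis $h > 1$ should ensure that the boundary corrections remain bounded by the main volume term.
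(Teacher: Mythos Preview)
Your volume computation is correct: the region $\{T\succ 0:\Tr(TY)\in I\}$ does have Lebesgue measure $\asymp h(\lambda+h)^2/(\det Y)^{3/2}$ in $(a,b,c)$-space, and this matches the target bound. But the step you yourself flag as ``the main technical point'' is the entire content of the lemma, and you have not carried it out. As you note, the trace slab has width $\asymp h/z$ in the $c$-direction and $\asymp h/x$ in the $a$-direction, either of which can be $\ll 1$; for thin convex bodies the lattice-point count is not controlled by the volume alone, and saying that Minkowski-reduction together with $h>1$ ``should ensure'' the boundary terms behave is not an argument. A Davenport-type estimate on a convex body $K\subset\R^3$ gives $\#(K\cap\Z^3)\ll \vol(K)+(\text{lower-dimensional projections})+1$, and you have not bounded those lower-order terms for your region; equivalently, you have said nothing about the successive minima of the image lattice $Y^{1/2}\,\Z^3\,Y^{1/2}$ relative to the $S$-region.

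The paper bypasses any general volume-to-count principle by slicing directly in the $(a,b,c)$ coordinates. It combines the trace upper bound $ax\le 2M-\tfrac12 by-cz$ with the determinant lower bound $\det T\ge m^2/\det Y$, eliminating $a$ via $a=(\det T+b^2/4)/c$, to obtain an explicit quadratic inequality that (after an affine shift) confines $(b,c)$ to an ellipse with semi-axes $\asymp M/z$ and $\asymp M/\sqrt{\det Y}$; this gives $\ll M^2/(z\sqrt{\det Y})$ admissible pairs. For each such pair the trace window then places $a$ in an interval of length $\asymp h/x$, and multiplying out with $xz\asymp\det Y$ yields the stated bound. If you try to make your approach rigorous you will end up reproducing this coordinate analysis anyway, at which point the change of variables to $S$-space is no longer saving any work.
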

\begin{proof} For convenience, define $m=\lambda-h$ and $M=\lambda+h$.
        Then $x_1,x_2$ are the roots of the polynomial $X^2 - \Tr(TY)X+\det(TY)$.
    In particular, if $m \le x_1, x_2 \le M$ then it follows
    \begin{equation}\label{traceinequality}
        2m \le x_1+x_2= \Tr(TY)=ax+\frac12by+cz \le 2M
    \end{equation}
    and
    \begin{equation}\label{determinantinequality}
        m^2 \le x_1x_2 = \det(TY) \le M^2.
    \end{equation}
    From~(\ref{traceinequality}) we obtain
    \begin{equation}\label{rangex}
        2m - \frac12by-cz \le ax \le 2M-\frac12by-cz.
    \end{equation}
    Since $c>0$, substituting~(\ref{rangex}) in~(\ref{determinantinequality}),
   and using that $Y$ is Minkowski-reduced, we get
    $$c\left(2M-\frac{b}2y-cz\right)-\frac{x}4b^2 \ge {\det(T)x} \ge \frac{m^2}z.$$
    Changing variables $\tilde{c}=c-\frac{xM}{\det Y}$ and $\tilde{b}=b+\frac{yM}{\det Y}$ we obtain
    \begin{equation}\label{ellipse}
z\tilde{c}^2+\frac{x}4\tilde{b}^2+\frac{y}2\tilde{b}\tilde{c} \ll \frac{xM^2}{ \det Y}-\frac{m^2}{z} \ll \frac{M^2}{z}.
    \end{equation}
    Completing the square,
    $$z\left(\tilde{c}+\frac{y}{4z}\tilde{b}\right)^2+\frac{\det Y}{4z}\tilde{b}^2 \ll \frac{M^2}{z}$$
    and thus $|\tilde{c}+\frac{y}{4z}\tilde{b}| \ll \frac{M}{z}$,
    $|\tilde{b}| \ll \frac{M}{\sqrt{\det(Y)}}$. So there are $\ll \frac{M^2}{z \sqrt{\det Y}}$ choices for the pair $(b,c)$.
    For each choice of $(b,c)$, by~(\ref{rangex}) there are $\ll \frac{M-m}{x}$ choices for
    $a$, giving the desired bound.
\end{proof}
\begin{lemma}[Blomer, Lemma 4 of \cite{Blomer}]\label{Blomercount}
    Assume that $Y$ is Minkowski-reduced and that \linebreak $h=O(\sqrt{k}\log{k})$.
    Then $$N_Y\left(\frac{k}{4\pi},h\right) \ll \frac{k^{\frac32+\epsilon}}{(\det Y)^{\frac34}}.$$
\end{lemma}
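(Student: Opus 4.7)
The strategy is to recast the counting problem geometrically via the symmetrisation $\phi(R) := Y^{1/2}RY^{1/2}$, a linear isomorphism of the 3-dimensional space $M^{\sym}_2(\R)$ whose Jacobian equals $(\det Y)^{3/2}$. Since $\phi(T)$ is symmetric with the same eigenvalues as $TY$, the condition $x_1, x_2 \in [\lambda - h, \lambda + h]$ becomes $\phi(T) \in \mathcal{R}$, where
$$\mathcal{R} := \{S \in M^{\sym}_2(\R) : \text{both eigenvalues of } S \text{ lie in } [\lambda - h, \lambda + h]\}.$$
Thus $N_Y(\lambda,h)$ equals the number of points of the lattice $L := \phi(\Lambda_{\Z})$ lying in $\mathcal{R}$, where $\Lambda_\Z := \{\mat{a}{b/2}{b/2}{c} : a,b,c \in \Z\}$. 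The region $\mathcal{R}$ has diameter $O(h)$ in the Frobenius norm and volume $\asymp h^3$, while $L$ has covolume $\asymp (\det Y)^{3/2}$.

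The crux is the determination of the successive minima of $L$. Using $\|\phi(R)\|_F^2 = \Tr(YRYR)$, the natural generators of $\Lambda_\Z$, namely $\mat{1}{0}{0}{0}$, $\mat{0}{1/2}{1/2}{0}$, and $\mat{0}{0}{0}{1}$, have images in $L$ of Frobenius norm $x$, $\sqrt{xz/2 + y^2/8}$, and $z$ respectively. Since $Y$ is Minkowski-reduced we have $|y|\le x\le z$ and $\det Y\asymp xz$, so these three norms are $\asymp x$, $\sqrt{\det Y}$, and $z$; their product matches the covolume of $L$, which forces
$$\mu_1 \asymp x, \qquad \mu_2 \asymp \sqrt{\det Y}, \qquad \mu_3 \asymp z.$$

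A standard geometry-of-numbers argument then yields
$$N_Y(\lambda,h) \ll \prod_{i=1}^{3} \left(1 + \frac{h}{\mu_i}\right) \ll 1 + \frac{h}{x} + \frac{h^2}{x\sqrt{\det Y}} + \frac{h^3}{(\det Y)^{3/2}},$$
where only the dominant term in each degree is retained, using $x\le\sqrt{\det Y}\le z$. Substituting $h \ll \sqrt{k}\log k$, one verifies term-by-term that each summand is $\ll k^{3/2+\eps}/(\det Y)^{3/4}$ provided $\det Y \le k^{4/3}$; in the complementary range $\det Y > k^{4/3}$, the trivial bound of Lemma~\ref{trivialcount} alone gives $\ll k^{5/2+\eps}/(\det Y)^{3/2} \le k^{3/2+\eps}/(\det Y)^{3/4}$, so the two ranges dovetail.

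The main obstacle is the rigorous identification of the successive minima: one must exploit the inequalities $|y|\le x \le z$ to rule out significantly shorter vectors in $L$ arising from nontrivial integer combinations of the natural generators. This should follow from a careful study of the Gram matrix of $\phi$ on $\Lambda_\Z$, whose off-diagonal entries are controlled by its diagonal ones precisely when $Y$ is Minkowski-reduced.
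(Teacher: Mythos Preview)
The paper does not prove this lemma at all; it simply quotes it from Blomer. Your argument is therefore not competing with anything in the paper, and on its own merits it is correct and self-contained.

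In fact, your closing caveat is unnecessary: the ``product $=$ covolume'' trick you already invoke \emph{is} the rigorous identification of the successive minima. Concretely, you have three linearly independent lattice vectors $v_i=\phi(e_i)$ with $\|v_1\|=x$, $\|v_2\|\asymp\sqrt{xz}$, $\|v_3\|=z$, hence $\mu_1\le x$, $\mu_2\ll\sqrt{xz}$, $\mu_3\ll z$; Minkowski's second theorem gives $\mu_1\mu_2\mu_3\asymp\text{covol}(L)\asymp(xz)^{3/2}$, so the product of the three ratios $\mu_i/\|v_i\|$ is $\asymp 1$ with each factor $\le 1$, forcing each ratio to be $\asymp 1$. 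No further analysis of the Gram matrix is needed. Your geometry-of-numbers step is also clean: $\mathcal R$ is convex (the Loewner order condition $(\lambda-h)I\preceq S\preceq(\lambda+h)I$ is linear in $S$) and has diameter $O(h)$, so translating by any lattice point in $\mathcal R$ reduces to counting $L\cap B(0,O(h))$, which is $\ll\prod_i(1+h/\mu_i)$ by the standard bound via a reduced basis. The splitting at $\det Y=k^{4/3}$, dictated by the $h/x$ term, and the handover to Lemma~\ref{trivialcount} in the complementary range are both correct.

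So your proof is complete as written; you may simply delete the last paragraph.
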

For $x > 0$ and $k > 0$ define
$$g_k(x)=x^{\frac{k}2}\exp(-2\pi x), \quad x_k=\frac{k}{4 \pi}.$$
\begin{lemma}\label{realanalysis}
The function $g_k$ reaches its maximum at $x_k$, and for $k \gg 1$ we have
    \begin{enumerate}
        \item if $|x-x_k| \ge \sqrt{k} \log(k)$ then $g_k(x) \ll k^{-100} g_k(x_k)$,
        \item if $x \ge 2x_k$ then $g_k(x) \le \exp(-\frac{\pi}3(x-x_k))g_k(x_k)$.
    \end{enumerate}
\end{lemma}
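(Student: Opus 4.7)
The plan is to reduce both assertions to a single concavity statement about the auxiliary function $\phi(u):=\log u-(u-1)$, defined for $u>0$, which satisfies $\phi(1)=0$, $\phi'(1)=0$, and $\phi''(u)=-1/u^2<0$; in particular $\phi$ is strictly concave and nonpositive. First I locate the maximum by direct differentiation: $(\log g_k)'(x)=k/(2x)-2\pi$ vanishes exactly at $x_k=k/(4\pi)$, and strict concavity of $\log g_k$ makes this the unique global maximum. The substitution $u=x/x_k$ then yields the exact identity
\[
\log\frac{g_k(x)}{g_k(x_k)} \;=\; \tfrac{k}{2}\,\phi(u),
\]
so both parts of the lemma reduce to quantifying how fast $\phi(u)$ becomes negative as $u$ moves away from $1$.

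For part (1), the hypothesis $|x-x_k|\ge \sqrt{k}\log k$ translates to $|u-1|\ge 4\pi(\log k)/\sqrt{k}$, and I would split into two regimes. In the near regime $|u-1|\le \tfrac12$, Taylor's theorem combined with the bound $|\phi''|\le 4$ on $[\tfrac12,\tfrac32]$ gives $\phi(u)\le -c_1(u-1)^2$ for an absolute constant $c_1>0$, whence $(k/2)\phi(u)\le -8\pi^2 c_1(\log k)^2$, which dominates $-100\log k$ once $k$ is sufficiently large. In the far regime $|u-1|>\tfrac12$, monotonicity of $\phi$ on $(0,1)$ and on $(1,\infty)$ gives $\phi(u)\le \max\{\phi(\tfrac12),\phi(\tfrac32)\}=-c_2<0$ for an absolute $c_2$, yielding the much stronger bound $g_k(x)/g_k(x_k)\le e^{-c_2 k/2}$, which is trivially $\ll k^{-100}$.

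For part (2), after dividing through by $x-x_k=x_k(u-1)$ the desired inequality becomes
\[
\log u \;\le\; \tfrac{5}{6}(u-1) \qquad \text{for all } u\ge 2.
\]
I would verify this by setting $F(u):=\log u-\tfrac{5}{6}(u-1)$: then $F(2)=\log 2-\tfrac{5}{6}<0$ and $F'(u)=1/u-5/6<0$ on $[2,\infty)$, so $F$ is decreasing and nonpositive there, which is exactly what is needed.

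No step is genuinely difficult; the only real obstacle is bookkeeping of the numerical constants, i.e.\ choosing the Taylor constant in part (1) large enough to beat the exponent $-100$ and verifying that the constant $\tfrac{5}{6}$ in part (2) matches the prescribed rate $\pi/3$ in the exponential (which it does, since $\tfrac{1}{12}\cdot x_k=\tfrac{\pi}{3}\cdot\tfrac{x_k}{4\pi}\cdot\tfrac{4\pi}{1} = \tfrac{\pi}{3}$ up to the algebraic manipulation $x-x_k=x_k(u-1)$).
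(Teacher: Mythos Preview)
Your proof is correct and rests on the same idea as the paper's---the strict concavity of $\log g_k$---but your packaging via the substitution $u=x/x_k$ and the single auxiliary function $\phi(u)=\log u-(u-1)$ is cleaner and more symmetric. The paper treats the two sides of $x_k$ separately in part~(1) (a direct estimate for $x<x_k$, and an integral bound yielding $g_k(x)\le g_k(x_k)\exp(-\pi(x-x_k)^2/(x+x_k))$ for $x\ge x_k$), then deduces part~(2) from the latter inequality via $(x-x_k)/(x+x_k)\ge 1/3$; your near/far dichotomy handles both sides at once and your direct verification of $\log u\le \tfrac56(u-1)$ for $u\ge 2$ avoids the intermediate quadratic bound.

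Two small points to tighten. First, in the near regime you wrote ``$|\phi''|\le 4$,'' but to obtain $\phi(u)\le -c_1(u-1)^2$ you need the \emph{lower} bound $-\phi''(u)=1/u^2\ge 4/9$ on $[\tfrac12,\tfrac32]$; this gives $c_1=2/9$ and the rest goes through. Second, the final parenthetical about matching $\tfrac56$ to $\pi/3$ is garbled; the clean statement is simply that $(\pi/3)(x-x_k)=(\pi/3)x_k(u-1)=(k/12)(u-1)$, so the target inequality $(k/2)\phi(u)\le -(k/12)(u-1)$ is exactly $\log u\le \tfrac56(u-1)$.
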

\begin{proof}
    The derivative of $\log(g_k)$ is given by
    $\frac{d}{dx} \log g_k(x) = 2\pi (\frac{x_k}x-1),$
    thus $g_k$ increases from $0$ to $x_k$ and decreases after this point, which
     establishes the first claim.
     To show~(1), it suffices to bound $g_k(x_k \pm \sqrt{k}\log(k))$.
    If $x=x_k-\sqrt{k}\log(k)$ then we have
    \begin{align*}
        \frac{g_k(x)}{g_k(x_k)}=\exp\left(
        \frac{k}2\log\left(1-4\pi \frac{\log (k)}{\sqrt{k}}\right)
        +2\pi \sqrt{k} \log(k)\right)\\
        \le \exp\left(-4\pi^2(\log k)^2\right).
    \end{align*}
    On the other hand if $x \ge x_k$ then we have
    \begin{align*}
        \log g_k(x)&= \log\left(g_k\left(\frac{x+x_k}{2}\right)\right)+2\pi\int_{\frac{x+x_k}2}^x \left(\frac{x_k}t-1\right) \, dt\\
        & \le \log(g_k(x_k))+2\pi\int_{\frac{x+x_k}2}^x \frac{x_k-x}{x+x_k} \, dt\\
        & = \log(g_k(x_k))-\pi\frac{(x-x_k)^2}{x+x_k}.
    \end{align*}
    Thus
    \begin{equation}\label{boundgk}
        g_k(x) \le g_k(x_k)\exp\left(-\pi \frac{(x-x_k)^2}{x+x_k}\right).
    \end{equation}
    In particular, for $x=x_k+\sqrt{k}\log(k)$, by~(\ref{boundgk}) and using the fact that $\sqrt{k}\log(k) \le \frac2ek$, we have
    $$g_k(x) \le g_k(x_k) \exp\left(-\frac{2\pi^2e}{e+4\pi}(\log k)^2\right).$$
    This proves~(1).
    Finally, if $x \ge 2x_k$ then $\frac{x-x_k}{x+x_k} \ge \frac13$  hence~(\ref{boundgk})
    gives~(2).
\end{proof}

\subsubsection{Main result}
\begin{theorem}\label{t:mainsup}
       Let $F \in S_k(\Gamma)$ be a Hecke eigenform that is not a Saito--Kurokawa lift and normalized so that $\|F\|_2=1$.
    Let $0\le \alpha \le 2$ and $0\le \beta \le1$, be constants such that the bound \eqref{e:reqboundGRH} holds for all $\Lambda \in \widehat{\Cl_d(M)}$ for all fundamental discriminants $d$ and positive integers $M$.
    Then for all $Z=X+iY \in \H_2$ we have  $$|(\det Y)^{\frac{k}2}F(Z)| \ll_\epsilon k^{\frac54+\beta+\frac{\alpha}2+\epsilon}. $$
\end{theorem}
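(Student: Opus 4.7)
The plan is to adapt Blomer's Fourier expansion approach from the Saito--Kurokawa case, now using the bound of Corollary \ref{FourierCoeffBound} on individual Fourier coefficients. Since $(\det Y)^{k/2}|F(Z)|$ is invariant under $\Sp_4(\Z)$, I may first reduce to $Y$ Minkowski-reduced (so in particular $\det Y \gg 1$). For each $T \in \Lambda_2$, let $x_1, x_2$ denote the positive eigenvalues of $TY$, so that $\Tr(TY) = x_1+x_2$ and $\det(T)\det(Y) = x_1x_2$. Combining the majorant $|F(Z)| \le \sum_T |a(F,T)|e^{-2\pi\Tr(TY)}$ with Corollary \ref{FourierCoeffBound} and these identities yields the per-term estimate
\begin{equation*}
(\det Y)^{k/2}|a(F,T)|e^{-2\pi\Tr(TY)} \ll \frac{k^{1/4+\alpha/2+\epsilon}(4\pi)^k}{\Gamma(k)}\, c(T)^{-1/2-\beta}\det(T)^{(-1+\beta)/2+\epsilon}\,g_k(x_1)g_k(x_2).
\end{equation*}
Stirling's formula gives $(4\pi)^k g_k(x_k)^2/\Gamma(k) \asymp \sqrt{k}$, where $x_k = k/(4\pi)$.

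The main contribution comes from the bulk regime $x_1,x_2 \in [x_k - \sqrt{k}\log k, x_k + \sqrt{k}\log k]$, where $g_k(x_i)$ is close to its maximum and $x_1x_2 \asymp k^2$, forcing $\det(T) \asymp k^2/\det(Y)$. Since $\beta \le 1$, this yields $\det(T)^{(-1+\beta)/2+\epsilon} \ll k^{-1+\beta+\epsilon}(\det Y)^{(1-\beta)/2-\epsilon}$, so each bulk term contributes $\ll k^{-1/4+\alpha/2+\beta+\epsilon}(\det Y)^{(1-\beta)/2-\epsilon}$ (using $c(T) \ge 1$). Lemma \ref{Blomercount} bounds the number of bulk $T$ by $\ll k^{3/2+\epsilon}/(\det Y)^{3/4}$, so the total bulk contribution is $\ll k^{5/4+\alpha/2+\beta+\epsilon}(\det Y)^{-1/4-\beta/2-\epsilon}$. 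This is $\ll k^{5/4+\alpha/2+\beta+\epsilon}$ on the reduced domain and yields the stated theorem; the extra $(\det Y)^{-1/4-\beta/2-\epsilon}$ factor also gives the improved bound high in the cusp that was announced in the introduction.

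To handle the tail, partition the complement of the bulk into dyadic boxes in $(x_1,x_2)$ and invoke Lemma \ref{realanalysis}: if $|x_i - x_k| \in [\sqrt{k}\log k, x_k]$ for some $i$ then $g_k(x_i) \ll k^{-100}g_k(x_k)$, which combined with the polynomial count from Lemma \ref{trivialcount} gives a contribution of size $O(k^{-A})$ for any $A$; and if $x_i \ge 2x_k$ for some $i$ the exponential decay $g_k(x_i) \le g_k(x_k)e^{-\pi(x_i-x_k)/3}$ comfortably absorbs the polynomial count from Lemma \ref{trivialcount} on each dyadic range. The principal obstacle in the whole argument is not the sup-norm reduction itself, which closely mirrors Blomer's treatment of Saito--Kurokawa lifts, but rather the non-optimality of the Fourier coefficient bound \eqref{e:c:intro}: the factor $\det(T)^{1/4}/c(T)^{1/2}$ lost when dropping all but one term in Corollary \ref{FourierCoeffBound} is precisely what produces the exponent $5/4$ in place of Blomer's $3/4$.
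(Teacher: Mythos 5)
Your proposal is correct and follows essentially the same route as the paper: majorize $|F|$ by its Fourier expansion, insert the individual coefficient bound from Corollary \ref{FourierCoeffBound}, and split into the bulk $x_1,x_2\in[x_k-\sqrt{k}\log k,\,x_k+\sqrt{k}\log k]$ counted by Lemma \ref{Blomercount} versus a tail handled by Lemma \ref{trivialcount} together with the decay of $g_k$ from Lemma \ref{realanalysis}. The only cosmetic difference is that you organize the tail dyadically while the paper uses the sets $\mathcal X_j$ with $x_1,x_2\le(j+1)k/(4\pi)$; the resulting estimates, including the improved bound $(\det Y)^{-1/4-\beta/2}k^{5/4+\beta+\alpha/2+\epsilon}$ high in the cusp, agree with the paper's.
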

\begin{proof}
    Write the Fourier expansion of $F$ and apply
    Corollary~\ref{FourierCoeffBound}:
    \begin{align*}
        (\det Y)^{\frac{k}2}F(Z) &= (\det Y)^{\frac{k}2}\sum_{T \in \Lambda_2} a(F, T) e^{2 \pi i \Tr(TZ)}\\
        & \ll_\epsilon \frac{(4\pi)^k}{\Gamma(k)} k^{\frac14+\frac{\alpha}2+\epsilon}\sum_{T \in \Lambda_2}
        \frac{c(T)^{-\frac12-\beta}}{\det(T)^{\frac12-\frac\beta2-\epsilon}}\det(TY)^{\frac{k}2}e^{-2 \pi  \Tr(TY)} .
    \end{align*}
    By Stirling's formula, we have
    \begin{equation}\label{Stirling}
        \frac{(4\pi)^k}{\Gamma(k)} \asymp k^{\frac12}
        \left(\frac{4\pi e}{k}\right)^k .
    \end{equation}
    We then proceed as in~\cite{Blomer}: We may assume without loss of generality that $Y$ is
    Minkowski-reduced, and furthermore the matrix $X=TY$ is diagonalizable
    with positive eigenvalues $x_1,x_2$, and  $\det(X)^{\frac{k}2}e^{-2\pi\Tr(X)}=g_k(x_1)g_k(x_2)$.
    Let $\mathcal X_0$ be the set of matrices $T \in \Lambda_2$ such that
    $\frac{k}{4\pi}-\sqrt{k}\log(k) \le x_1, x_2 \le \frac{k}{4\pi}+\sqrt{k}\log(k)$.
    For those matrices $T \in \mathcal X_0$, we have $\det(T) \asymp k^2 (\det Y)^{-1}$ and thus
    by~(\ref{Stirling}), using Lemma~\ref{realanalysis} and bounding $c(T)\ge 1$ trivially we have
    $$\frac{(4\pi)^k}{\Gamma(k)}
    k^{\frac14+\frac{\alpha}2+\epsilon}
    \frac{c(T)^{-\frac12-\beta}}
    {\det(T)^{\frac12-\frac\beta2-\epsilon}}
    \det(TY)^{\frac{k}2}e^{-2 \pi  \Tr(TY)}
    \ll (\det Y)^{\frac12-\frac\beta2}
    k^{-\frac14+\beta+\frac{\alpha}2+\epsilon}.$$
    Furthermore, by Lemma~\ref{Blomercount} we have
    $\# \mathcal X_0 \ll k^{\frac32+\epsilon}(\det Y)^{-\frac34}$ such matrices $T$,
    and thus we get a contribution
    $$(\det Y)^{\frac{k}2}\sum_{T \in \mathcal X_0} a(F, T) e^{2 \pi i \Tr(TZ)}
    \ll_\epsilon (\det Y)^{-\frac14-\frac\beta2}k^{\frac54+\beta+\frac{\alpha}2+\epsilon}.$$
    Now for $j > 0$,  let $\mathcal X_j$ be the set of matrices $T \in \Lambda_2$ such that $0<x_1,x_2 \le (j+1)\frac{k}{4\pi}$ and $T \not \in \bigcup_{i=0}^{j-1} \mathcal X_i$.
    Then combining Lemma~\ref{trivialcount}, part~(1) of Lemma~\ref{realanalysis}
    and~(\ref{Stirling}) we get
    \begin{align*}
        (\det Y)^{\frac{k}2}\sum_{T \in \mathcal X_1} a(F, T) e^{2 \pi i \Tr(TZ)}
    &
    \ll k^{-50}(\det Y)^{-\frac32}.
    \end{align*}
    Finally, for $j \ge 2$, combining Lemma~\ref{trivialcount}, part~(2) of Lemma~\ref{realanalysis}
    and~(\ref{Stirling}) we get
    \begin{align*}
        (\det Y)^{\frac{k}2}\sum_{T \in \mathcal X_j} a(F, T) e^{2 \pi i \Tr(TZ)}
    \ll \exp\left(-\frac{(j-1)k}{13}\right)(\det Y)^{-\frac32},
    \end{align*}
    and thus
    $$ (\det Y)^{\frac{k}2}F(Z) = (\det Y)^{\frac{k}2}\sum_{j \ge 0}\sum_{T \in \mathcal X_j} a(F, T) e^{2 \pi i \Tr(TZ)} \ll_\epsilon (\det Y)^{-\frac14-\frac\beta2-\epsilon}k^{\frac54+\beta+\frac{\alpha}2+\epsilon} \ll k^{\frac54+\beta+\frac{\alpha}2+\epsilon}.$$
\end{proof}

\bibliography{ggp-supnorm}{}
\bibliographystyle{alpha}

\end{document}